\numberwithin{equation}{section}
\newtheorem{theorem}{Theorem}[section]
\newtheorem{thm}[theorem]{Theorem}
\newtheorem{proposition}[theorem]{Proposition}
\newtheorem{lemma}[theorem]{Lemma}
\theoremstyle{remark}
\newtheorem{remark}[theorem]{Remark}
\theoremstyle{definition}
\newtheorem{definition}[theorem]{Definition}
\def\XXint#1#2#3{{\setbox0=\hbox{$#1{#2#3}{\int}$}
	\vcenter{\hbox{$#2#3$}}\kern-.5\wd0}}
\newcommand{\N}{\mathbb N}
\newcommand{\R}{\mathbb R}
\newcommand{\Z}{\mathbb Z}
\renewcommand{\H}{\mathcal H}
\renewcommand{\S}{{\mathcal S}}
\newcommand{\I}{{\mathcal I}}
\newcommand{\diam}{\operatorname{diam}}
\newcommand{\id}{\operatorname{id}}
\newcommand{\norm}[1]{\left\Vert#1\right\Vert}
\DeclareMathOperator{\Ext}{Ext}
\newcommand{\x}{{\bf x}}
\newcommand{\SL}[1]{{\color{red} [SL: #1]}}
\begin{document}

\title[Regular distances on the Heisenberg group]
{Ahlfors-regular distances on the Heisenberg group without biLipschitz pieces}

\author{Enrico Le Donne}
\author{Sean Li}
\author{Tapio Rajala}

\address[Le Donne, Rajala]{University of Jyvaskyla\\
         Department of Mathematics and Statistics \\
         P.O. Box 35 (MaD) \\
         FI-40014 University of Jyvaskyla \\
         Finland}
\email{enrico.e.ledonne@jyu.fi}
\email{tapio.m.rajala@jyu.fi}

\address[Li]{
Department of Mathematics\\
The University of Chicago\\
Chicago\\
IL 60637}
\email{seanli@math.uchicago.edu}

\thanks{S.L. is supported by NSF postdoctoral fellowship DMS-1303910.  T.R. acknowledges the support of the Academy of Finland project no. 137528.}

 \keywords{Heisenberg group,
 Ahlfors-regular, biLipschitz pieces, sub-Riemannian geometry.}

\renewcommand{\subjclassname}{%
 \textup{2010} Mathematics Subject Classification}
\subjclass[]{ 
53C17, 
22F50, 
 22E25, 
14M17. 
}

\date{August 25, 2015}

\begin{abstract}  
We show that the Heisenberg group is not minimal in looking down.
This answers 
Problem 11.15 in \emph{Fractured fractals and broken dreams} by David and Semmes, or equivalently,
Question 22 and hence also Question 24 in \emph{Thirty-three yes or no questions about mappings, measures, and metrics}
by Heinonen and Semmes.

The non-minimality of the Heisenberg group is shown
by giving an example of an Ahlfors $4$-regular metric space $X$ having big pieces of itself 
such that no Lipschitz map from a subset of $X$
to the Heisenberg group has image with positive measure,
and by providing a Lipschitz map from the Heisenberg group to the space $X$ having as image the whole $X$.

As part of proving the above result we define a new distance on the Heisenberg group that 
is bounded by the Carnot-Carath\'eodory distance, that preserves the Ahlfors-regularity,
and such that the Carnot-Carath\'eodory distance and the new distance are biLipschitz equivalent on no set of positive measure.
This construction works more generally in any Ahlfors-regular metric space where one can make suitable shortcuts.
Such spaces include for example all snowflaked Ahlfors-regular metric spaces.
With the same techniques we also provide an example of a left-invariant distance on the Heisenberg group biLipschitz to the Carnot-Carath\'eodory distance for which no blow-up admits nontrivial
dilations.
\end{abstract}

\maketitle
\newpage

\tableofcontents

\newpage
\section{Introduction}   

In \cite{davsem} David and Semmes proposed a concept of BPI  
(big pieces of itself) spaces as a notion of rough self-similarity for metric spaces. 
The definition of a BPI space requires any two balls of the space to contain big pieces 
that are biLipschitz equivalent, see Definition~\ref{def:BPI} for the precise definition. 
Self-similar fractals and Carnot groups are easy examples of BPI spaces. 
David and Semmes also introduced \emph{BPI equivalence} and a partial order for 
BPI spaces 
called \emph{looking down}. Both of them will be defined in Section~\ref{sec:preli}. 
Two BPI spaces are BPI equivalent if 
large parts of the two spaces
are
biLipschitz equivalent. A BPI metric space $X$ looks down on another BPI metric space $Y$ 
if 
$X$ and $Y$ have same Hausdorff dimension and
there is a closed subset of $X$ that can be mapped to a set of positive measure in $Y$ via a 
Lipschtiz map. 
 BPI equivalence of spaces $X$ and $Y$ implies that $X$ and $Y$ are
\emph{look-down equivalent}, meaning that $X$ looks down on $Y$ and $Y$ looks down on $X$. 
However, Laakso has shown that the converse is not true in general \cite{laakso-bpi}.

The partial ordering of BPI spaces raises the interesting question of what are the possible 
minimal spaces in this ordering. A space $X$ is \emph{mimimal} in looking down if every space
$Y$ on which $X$ looks down is look-down equivalent to $X$. For example, from the result of
Kirchheim \cite{Kirchheim} we know that Euclidean spaces are minimal in looking down. 
A quantitative version of Kirchheim's theorem was later given in \cite{Schul_BiLipschitz_Decomposition}
in which it was shown that if a map $f : [0,1]^n \to X$ has positive Hausdorff $n$-content, then it
has a quantitatively large biLipschitz piece.

David and Semmes asked in Problem 11.15 of \cite{davsem} if the Heisenberg group $\mathbb H$ is also minimal 
in looking down, when equipped with sub-Riemannian distances, also called  Carnot-Carath\'eodory distances. This was also asked as Question 22 of \cite{Heinonen_Semmes}.
We show that this is not the case.

\begin{thm}\label{thm:Hnotminimal}
 The subRiemannian Heisenberg group 
  is not minimal in looking down.
\end{thm}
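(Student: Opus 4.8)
The plan is to construct a single metric space $X$ that witnesses non-minimality: an Ahlfors $4$-regular, BPI space on which $\mathbb H$ looks down (so there is a Lipschitz surjection $\mathbb H \to X$), but which does not look down on $\mathbb H$ (no Lipschitz map from a subset of $X$ to $\mathbb H$ has image of positive measure). Since $\mathbb H$ looks down on $X$ but $X$ does not look down on $\mathbb H$, the two are not look-down equivalent, so $\mathbb H$ fails to be minimal. The construction of $X$ will be a "shortcut" distance $d_X$ on the underlying set of $\mathbb H$: one starts from the Carnot–Carath\'eodory distance $d_{cc}$ and, along a carefully chosen countable family of pairs of points at various scales, declares new, shorter connections, then takes the induced length-type (infimal chain) distance. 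The first block of work is to show this procedure, carried out with the right bookkeeping on scales and locations, (i) produces a genuine distance $d_X \le d_{cc}$, (ii) keeps the space Ahlfors $4$-regular (the shortcuts must be sparse enough not to collapse measure), and (iii) makes $d_X$ and $d_{cc}$ biLipschitz on no set of positive measure — the latter because around a.e.\ point one sees shortcuts at arbitrarily small scales that locally distort $d_{cc}$ by a definite factor, so biLipschitz equivalence of the identity map fails on any positive-measure piece by a density/Lebesgue-point argument.

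Next I would establish that this $X$ is BPI and that $\mathbb H$ looks down on $X$. The identity map $(\mathbb H,d_{cc}) \to (\mathbb H,d_X) = X$ is $1$-Lipschitz and surjective since $d_X \le d_{cc}$, which immediately gives that $\mathbb H$ looks down on $X$ provided $X$ has the same Hausdorff dimension, namely $4$ — guaranteed by Ahlfors $4$-regularity. That $X$ has big pieces of itself should follow from the self-similar way the shortcut family is arranged: Heisenberg dilations (suitably discretized) approximately permute the shortcut data, so any two balls of $X$ contain large subsets that are biLipschitz equivalent, with uniform constants. This is where I would lean on the phrase in the abstract that "this construction works more generally in any Ahlfors-regular metric space where one can make suitable shortcuts," so the BPI property is packaged as a general lemma about shortcut metrics and then applied to $\mathbb H$.

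The heart of the matter, and the step I expect to be the main obstacle, is showing that $X$ does \emph{not} look down on $\mathbb H$: no Lipschitz $f$ from a subset $A \subseteq X$ into $(\mathbb H,d_{cc})$ can have $\mathcal H^4(f(A)) > 0$. The intuition is that $X$ is "the same space as $\mathbb H$ but with $d_{cc}$ shortened everywhere at small scales," so a Lipschitz map out of $X$ into $\mathbb H$ would, restricted to tiny shortcut intervals, have to be very contracting relative to $d_{cc}$, and this contraction, accumulated over a density point of positive-measure image, forces the image to be Hausdorff-$4$-null. Making this precise is delicate: one cannot appeal directly to Pansu-type differentiation because the source is not a Carnot group. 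Instead I would argue by contradiction — if $\mathcal H^4(f(A)) > 0$ then by restricting to a Lebesgue density point and rescaling by Heisenberg dilations one extracts a tangent/blow-up situation in which the shortcut structure of $X$ survives in the limit (this is exactly why the sparse-but-self-similar arrangement of shortcuts matters), while $f$ limits to a biLipschitz embedding on a positive-measure set; this would contradict step (iii), that $d_X$ and $d_{cc}$ are biLipschitz on no set of positive measure. Getting the blow-up analysis to play well with the coarea/Lebesgue-point estimates, and choosing the shortcut scales so that both Ahlfors-regularity and the "no biLipschitz pieces" property hold simultaneously, is the technical crux of the paper.
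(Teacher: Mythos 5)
Your blueprint matches the paper's architecture at a high level: construct a shortcut metric, verify Ahlfors regularity and the ``no biLipschitz pieces'' property (Theorems~\ref{thm:general}/\ref{thm:Heisenbergdistance}), arrange the shortcuts self-similarly so the resulting space is BPI, observe that $\mathbb H$ looks down on it via the identity, and rule out the reverse looking-down. But there is a genuine gap in the last step, and it is precisely where you explicitly turn away from the paper's key idea.

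You write that ``one cannot appeal directly to Pansu-type differentiation because the source is not a Carnot group.'' This is backwards, and it is exactly the observation that makes the paper's argument work. Because $d \leq d_b$, any map $f \colon (A,d) \to (\mathbb H,d_b)$ that is $L$-Lipschitz for $d$ is automatically $L$-Lipschitz for $d_b$ as well; since $A \subseteq \mathbb H$, the Pansu--Magnani theorem applies to $f$ viewed as a $d_b$-Lipschitz map on a measurable subset of the Carnot group $(\mathbb H,d_b)$, giving a Pansu differential $Df(x)$ (a graded group homomorphism) almost everywhere, and the area formula \eqref{e:area-formula} then forces $Df(x)$ to be bijective, hence biLipschitz on $(\mathbb H,d_b)$, on a positive-measure set. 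Your alternative --- blow up $f$ at a density point and hope it ``limits to a biLipschitz embedding'' --- has no mechanism to produce the limit: without Pansu differentiation there is no candidate limit map, no reason it should be a homomorphism, and no way to invert it so as to reduce to the identity map $\id \colon (K,d) \to (\mathbb H,d_b)$ and thereby contradict the no-biLipschitz-pieces statement. The paper's blow-up computation \eqref{eq:fm_estimate}--\eqref{eq:fmdestimate} is organized entirely around comparing the rescaled $f_m$ to $Df(x)$ and then pulling back by $Df(x)^{-1}$; this is the step you need and do not have.

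A secondary difference: the paper builds the BPI space not on all of $\mathbb H$ but on a compact self-similar set $K$ (the attractor of the similitudes $S_{i,j,k}$, which has nonempty interior), placing the shortcuts at exactly self-similar locations with \emph{zero} cost. This makes the rescaling maps $S_{\mathtt i}$ genuine similarities for the new distance on a large sub-piece $S_{\mathtt j}(K)$ (equation~\eqref{e:j-invariance}), rather than the ``approximate'' self-similarity you invoke. Your version would require quantifying that approximation; the paper's choice makes the BPI verification exact and therefore cleaner. This is a stylistic and technical simplification rather than a gap, but it is worth knowing that the exact-self-similarity route is available and is what the paper uses.
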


This theorem has important implications in the development of a theory of rectifiability based on the Heisenberg group.
Recall that a metric measure space $(X,d,\mu)$ is countably $n$-rectifiable if there exist a countable set of
Borel subsets $A_i \subseteq \R^n$ and Lipschitz maps $f : A_i \to X$ 
such that $\mu\left(X \backslash \bigcup_i f(A_i) \right) = 0$ and $\mu \ll \H^n$ where $\H^n$ is the Hausdorff $n$-measure. 
It was shown in \cite{Kirchheim} that, by further countably decomposing each $f(A_i)$ if necessary, 
one may assume that each $f_i$ is biLipschitz.  

One can easily create a definition of being $\mathbb H$-rectifiable by letting each $A_i$
be a Borel subset of the Heisenberg group $\mathbb H$ and setting $n = 4$, the Hausdorff dimension of $\mathbb H$. 
However, we now see that there exists a metric measure space $(X,d,\mu)$ with positive Hausdorff 
4-measure that is the Lipschitz image of a subset of $\mathbb H$ but is not the countable union
of biLipschitz images of subsets of $\mathbb H$. 
Thus, ``Lipschitz rectifiability'' is strictly weaker than ``biLipschitz rectifiability'' when using the Heisenberg geometry.

Using the self-similarity of the Carnot-Carath\'eodory distance $d_{cc}$ it is easy to construct
BPI spaces that can be realized as subsets of $\mathbb H$ with self-similar type modifications of the distance $d_{cc}$.
A critical part in the proof of Theorem~\ref{thm:Hnotminimal} is to modify the distance $d_{cc}$ to get a new distance $d$ 
in such a way that with the $d_{cc}$ distance the space looks down on the space equipped with the distance $d$, but not the other way.
Such distance is constructed using a shortening technique that has been also used in \cite{LeDonne7, LeDonne_Rigot_Heisenberg_BCP} 
to give examples of distances not satisfying the Besicovitch Covering Property.
The result obtained here with the shortening  technique is the following.


\begin{thm}\label{thm:Heisenbergdistance}
Let $(\mathbb H, d_{cc})$ be the subRiemannian Heisenberg group.
There exists a distance $d$ on $\mathbb H $ such that
\begin{enumerate}
\item $d\leq d_{cc}$;
\item $(\mathbb H, d)$ is Ahlfors 4-regular;
\item if $A\subseteq \mathbb H$ is a subset with $\H^4_{cc}(A)>0$, then 
$d$ and $d_{cc}$ are not biLipschitz equivalent on $A$.
\end{enumerate}
\end{thm}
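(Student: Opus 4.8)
The plan is to realize $d$ as the length‑type (chain) distance obtained from $d_{cc}$ by inserting a carefully chosen countable family of \emph{shortcuts}. Concretely, I would fix a decreasing sequence of scales $r_1>r_2>\cdots\to 0$ and, for each $n$, a family of ``shortcut pairs'' $(a,b)$ with $d_{cc}(a,b)\asymp r_n$ (locally finite in a suitable quantitative sense), together with a weight $w_n=\varepsilon_n r_n$ where $\varepsilon_n\to 0$; then I set $d(x,y)$ to be the infimum of $\sum_i c(z_{i-1},z_i)$ over finite chains $x=z_0,\dots,z_k=y$, where $c(z,z')=w_n$ if $\{z,z'\}$ is an (unordered) generation‑$n$ shortcut pair and $c(z,z')=d_{cc}(z,z')$ otherwise. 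By construction $d$ is a symmetric pseudodistance satisfying the triangle inequality and $d\le d_{cc}$, which is item (1). To get the full strength advertised in the abstract I would first isolate an abstract statement valid in any Ahlfors $Q$‑regular space carrying such a ``shortcut system'' compatible with its rescalings, and then verify the hypotheses for $(\mathbb H,d_{cc})$ using the dilations $\delta_\lambda$ and the abundance of sub‑Riemannian geodesics (the snowflake examples mentioned in the abstract being covered by the same abstract statement).

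For item (2), the lower bound is immediate: since $d\le d_{cc}$ we have $B_{cc}(x,r)\subseteq B_d(x,r)$, hence $\mu(B_d(x,r))\ge\mu(B_{cc}(x,r))\gtrsim r^4$, where $\mu=\mathcal{H}^4_{cc}$ and $(\mathbb H,d_{cc})$ is Ahlfors $4$‑regular. The content is the upper bound $\mu(B_d(x,r))\le Cr^4$, together with the fact that $d$ is a genuine distance (distinct points are not identified). The mechanism I would exploit is that a chain realizing $d(x,y)<r$ has total weight $<r$, so it traverses only shortcuts of weight $<r$; a shortcut of generation $n$ with $r_n\gg r$ is comparatively expensive, can be used only boundedly many times along such a chain — because chaining two coarse shortcuts forces a stretch of ordinary $d_{cc}$‑travel of length comparable to the coarser scale, provided the shortcut pairs are geometrically placed so as to preclude free concatenation — and each coarse shortcut used enlarges the reachable region only by a controlled amount. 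Turning this into $\mu(B_d(x,r))\le Cr^4$ for all $r>0$ requires an induction on the scale $r$ and a judicious choice of the parameters $r_n,\varepsilon_n$ and of the placement of the shortcut pairs, so that the per‑scale contributions are genuinely summable (rather than merely logarithmically bounded). \textbf{This estimate is the main obstacle of the whole construction.} Once it is in hand (equivalently, a modulus bound $B_d(x,r)\subseteq B_{cc}(x,\omega(r))$ with $\omega(r)\to0$), $d(x,y)=0$ forces $x=y$, so $d$ is a distance.

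For item (3), fix $A\subseteq\mathbb H$ with $\mathcal{H}^4_{cc}(A)>0$ and $L\ge 1$. Since $(\mathbb H,d_{cc},\mathcal{H}^4_{cc})$ is doubling, the Lebesgue density theorem applies, so we may pick a density point $x_0$ of $A$, and set $\beta(\rho)=1-\mathcal{H}^4_{cc}\big(A\cap B_{cc}(x_0,C\rho)\big)/\mathcal{H}^4_{cc}\big(B_{cc}(x_0,C\rho)\big)\to0$ as $\rho\to0$. For $n$ large the shortcut system provides a generation‑$n$ pair $(a,b)$ with $a,b\in B_{cc}(x_0,Cr_n)$, $d_{cc}(a,b)\asymp r_n$, and $d(a,b)\le w_n=\varepsilon_n r_n$. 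By Ahlfors $4$‑regularity of $d_{cc}$, the balls $B_{cc}(a,t)$ and $B_{cc}(b,t)$ with $t=K\,\beta(r_n)^{1/4}r_n$ each have measure exceeding $\beta(r_n)\,\mathcal{H}^4_{cc}\big(B_{cc}(x_0,Cr_n)\big)$ once $K$ is a large enough absolute constant, hence each meets $A$; choose $x\in A\cap B_{cc}(a,t)$ and $y\in A\cap B_{cc}(b,t)$. Then
\[
d(x,y)\le d_{cc}(x,a)+d(a,b)+d_{cc}(b,y)\le \big(2K\beta(r_n)^{1/4}+\varepsilon_n\big)r_n,
\]
while $d_{cc}(x,y)\ge d_{cc}(a,b)-2t\gtrsim r_n$ for $n$ large, so that $d(x,y)/d_{cc}(x,y)\to0$ as $n\to\infty$. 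In particular, for $n$ large there are $x,y\in A$ with $d(x,y)<L^{-1}d_{cc}(x,y)$; combined with $d\le d_{cc}$ this shows $d$ and $d_{cc}$ cannot be $L$‑biLipschitz on $A$. As $L\ge 1$ was arbitrary, (3) follows, completing the proof.
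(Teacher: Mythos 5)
Your proposal is structurally the same as the paper's: shortcuts at a sequence of scales $r_n$ with costs $\varepsilon_n r_n$, chain distance, density points for item~(3), and the idea of abstracting the construction to general Ahlfors-regular spaces admitting a suitable shortcut system. Item~(3) as you wrote it is essentially the paper's argument, and item~(1) is immediate. But you explicitly flag the upper Ahlfors bound as ``the main obstacle'' and do not resolve it, so as written this is a plan with a genuine gap at its center rather than a proof.

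What the paper supplies to close that gap is a precise geometric hypothesis on the shortcut pair that your sketch only gestures at (``geometrically placed so as to preclude free concatenation''): for each $p$ and scale $r$ there exist $q_1,q_2\in B_\rho(p,r)$ with $\rho(q_1,q_2)\ge\lambda r$ such that $\rho(p_1,p_2)\le\rho(p_1,q_1)+\rho(p_2,q_2)$ whenever $p_1,p_2\notin B_\rho(p,r)$. In $\mathbb H$ this is verified for the box distance with $q_1=p$, $q_2=p\cdot(0,0,r^2/4)$ (a purely vertical shortcut), using the explicit box-norm inequality. This hypothesis is exactly what lets one \emph{collapse} a shortcut out of the middle of a chain whenever its neighbours are coarser, so that every chain can be replaced by one whose ``level function'' is decreasing then increasing, with multiplicity at most $4$ per level. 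From that structure one gets the crucial counting lemma: for $r<\lambda^n$, at most one shortcut pair of level $<n$ can meet $B_d(x,r)$. Therefore $B_d(x,r)\subseteq B_\rho(\{y_1,y_2\},Cr)$ for at most two points $y_1,y_2$, and the upper bound $\mu(B_d(x,r))\lesssim r^4$ follows without any induction over scales or delicate summability argument. Also note that the choice of scales is not free: the paper sets $r_n=\lambda^n$ with $\lambda$ small and arranges, by a net/separation construction (shortcut centers of level $n$ are $4\lambda^n$-separated, disjoint from $4\lambda^n$-neighbourhoods of lower-level shortcut endpoints, yet form a $c_E\lambda^n$-cover), exactly the separation properties that make the counting lemma work. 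Your assertion that a chain of total weight $<r$ ``traverses only shortcuts of weight $<r$'' is also not the mechanism: a chain can spend almost zero cost on a coarse shortcut precisely because $\varepsilon_n\to0$, so the control must come from the collision/separation geometry, not from a cost accounting. Filling in these three ingredients (the shortcut hypothesis, the reduction to decreasing-increasing level functions, and the one-coarse-shortcut lemma) is what turns your sketch into a proof.
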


Thus, we construct an Ahlfors 4-regular metric space $X$ onto which $(\mathbb H, d_{cc})$ Lipschitz surjects, but for which this surjection has no biLipschitz pieces.  Theorem~\ref{thm:Heisenbergdistance} answers Question 24 of \cite{Heinonen_Semmes} negatively (although 
the same negative answer is provided by the
 negative answer to Question 22 given by Theorem~\ref{thm:Hnotminimal}).
 
It should be noted that this behavior changes when one requires that the target $X$ is another Carnot group.  
Indeed, one can then use a similar argument as in \cite{Kirchheim},
with inspiration from \cite{pauls},
 to show that Lipschitz maps from the Heisenberg group to another Carnot group with positive 4-measure image have biLipschitz pieces.  This statement can also be made quantitative as was done in \cite{Meyerson_Lipschitz_biLipschitz,Li-lip-bilip}.

%

Another situation where Lipschitz maps have biLipschitz pieces is when the spaces are
Ahlfors regular, linearly locally contractible topological manifolds and the target has manifold weak tangents,
see the work of G.C. David \cite{gcdavid} (this David is not the same David of David-Semmes).
We note that in Theorem \ref{thm:Heisenbergdistance} the constructed space $(\mathbb H, d)$ neither has manifold tangents nor is linearly locally
contractible.

The construction of the distance $d$ in Theorem~\ref{thm:Heisenbergdistance}
relies on the fact that in the Heisenberg group we can shorten the distance between two points
that differ only in the vertical component without affecting the distances far away from the two points.
By taking this property as an assumption we obtain a more general result.


\begin{thm}\label{thm:general}
Let $(X,\rho)$ be a metric space and $Q>0$.
Assume 
\begin{enumerate}
\item $(X,\rho)$ is Ahlfors $Q$-regular;
 \item there exists $\lambda\in(0,1)$ such that
 for all $p\in X$ and all $0 < r < \diam(X)$ there exist $q_1, q_2\in B_\rho (p,r)$ such that
 $$\rho(q_1, q_2) \geq 
 \lambda r$$
 and
 \begin{equation}\label{e:general-metric-assumption}
 \rho(p_1, p_2) \leq \rho (p_1, q_1) + \rho (p_2, q_2), \qquad \forall p_1, p_2 \notin B_\rho (p,r).  
 \end{equation} 
\end{enumerate}
Then there exists a distance $d$ on $X $ such that
\begin{enumerate}
\item $d\leq \rho$;
\item $(X, d)$ is Ahlfors $Q$-regular;
\item if $A\subseteq X$ is a subset with $\H^Q_{\rho}(A)>0$, then 
$d$ and $\rho$ are not biLipschitz equivalent on $A$.
\end{enumerate}
\end{thm}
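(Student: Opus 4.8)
The plan is to build $d$ as an infinite iteration of local "shortcut" modifications at a dense countable set of scales and locations, so that every ball of every radius eventually gets a pair of points whose mutual $\rho$-distance is collapsed. Concretely, I would first reduce to a countable family of "shortcut data": fix a countable dense set $\{p_i\}\subseteq X$ and a countable dense set of radii, and for each pair $(p_i,r_j)$ apply hypothesis (2) to extract points $q_1^{i,j},q_2^{i,j}\in B_\rho(p_i,r_j)$ with $\rho(q_1^{i,j},q_2^{i,j})\ge\lambda r_j$ and the non-interference inequality \eqref{e:general-metric-assumption}. Enumerate all this data as a single sequence of triples $(a_k,b_k,s_k)$, $k\in\N$, where $s_k\to 0$ is not required but we do want, for every $p$ and every scale $r$, infinitely many $k$ with $a_k,b_k\in B_\rho(p,r)$ and $\rho(a_k,b_k)\gtrsim r$ — this follows from density. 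Then define a decreasing sequence of distances $\rho=d_0\ge d_1\ge d_2\ge\cdots$ by letting $d_{k}$ be the largest distance $\le d_{k-1}$ with $d_k(a_k,b_k)\le \epsilon_k s_k$ for a rapidly decaying sequence $\epsilon_k\downarrow 0$; explicitly $d_k(x,y)=\min\{d_{k-1}(x,y),\ d_{k-1}(x,a_k)+\epsilon_k s_k+d_{k-1}(b_k,y),\ d_{k-1}(x,b_k)+\epsilon_k s_k+d_{k-1}(a_k,y)\}$, i.e.\ a one-edge graph-metric shortcut. Set $d=\lim_k d_k=\inf_k d_k$.

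The three properties would then be checked in turn. Property (1), $d\le\rho$, is immediate since $d\le d_0=\rho$. For property (3), non-biLipschitz-equivalence on any $A$ with $\H^Q_\rho(A)>0$: the key point is that each shortcut strictly contracts a pair at a definite ratio $\epsilon_k$, and by the density arrangement, for $\H^Q_\rho$-a.e.\ point $p\in A$ and every scale $r$ there are infinitely many $k$ with $a_k,b_k\in B_\rho(p,r)$, $\rho(a_k,b_k)\ge c\,r$, while $d(a_k,b_k)\le d_k(a_k,b_k)\le\epsilon_k s_k\le \epsilon_k\, C r$; letting $k\to\infty$ along such a subsequence shows $d(a_k,b_k)/\rho(a_k,b_k)\to 0$ with both points in $B_\rho(p,r)$, so no biLipschitz bound can hold on any neighborhood of $p$ in $A$, hence not on $A$. (One should confirm that $A$ having positive $\H^Q_\rho$-measure forces such "approximately dense at all scales" points to be full-measure — this is a Lebesgue-density / Vitali-covering argument using Ahlfors regularity.)

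The main obstacle, and the heart of the proof, is property (2): that $(X,d)$ remains Ahlfors $Q$-regular, in particular that $d$ does not degenerate (it is a genuine metric, i.e.\ $d(x,y)>0$ for $x\ne y$) and that $d$-balls have the right measure. The non-interference hypothesis \eqref{e:general-metric-assumption} is exactly what makes this tractable: when we shortcut the pair $(a_k,b_k)$ inside a ball $B$, any point outside $B$ already satisfies $\rho(p_1,p_2)\le\rho(p_1,a_k)+\rho(p_2,b_k)$, so routing through the new short edge does not actually decrease distances between points far from $B$ — the modification is "local" to $B$. I would prove a quantitative localization lemma: there is a constant depending only on $Q$ and $\lambda$ such that for all $x,y$, $d(x,y)\ge c\,\rho(x,y)$ whenever $\rho(x,y)$ is comparable to or larger than the scale of the "relevant" shortcuts near $x,y$, and more generally control $d(x,y)$ from below by comparing to $\rho$ at a suitably truncated scale. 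Summability of the $\epsilon_k$ (chosen fast enough, e.g.\ $\epsilon_k s_k$ summable against the relevant geometric series at each scale) keeps the total contraction at any fixed scale bounded, so a $d$-ball of radius $t$ contains a $\rho$-ball of radius $\gtrsim t$ and is contained in a $\rho$-ball of radius $\lesssim t$; combined with $\H^Q_\rho$-Ahlfors-regularity and a check that $\H^Q_d$ and $\H^Q_\rho$ are mutually comparable, this gives Ahlfors $Q$-regularity of $(X,d)$. Throughout, the delicate bookkeeping is choosing the enumeration and the $\epsilon_k$ so that (3) is forced (enough shortcuts, at every scale, in every ball) while (2) survives (shortcuts sparse enough in "cumulative strength" at every scale); balancing these two competing demands is where the real work lies.
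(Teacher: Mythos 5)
Your high-level plan --- build $d$ as a shortest-path (itinerary) metric obtained from $\rho$ by adding shortcut edges, use hypothesis (2) for localization, and prove (3) by a Lebesgue-density argument --- agrees with the paper. But there is a genuine gap, not mere bookkeeping: the ``dense shortcuts with rapidly decaying $\epsilon_k$'' recipe breaks property (2) and can even make $d$ degenerate. The mechanism is \emph{chaining}. If shortcuts at scale $r$ are available at a dense set of locations with ratio $\varepsilon$, an itinerary can string many of them together with almost no walking in between. On the snowflake $([0,1],|\cdot|^{1/2})$ a shortcut at scale $r$ jumps a $\rho$-gap of order $r$ at cost $\varepsilon r$, and each such jump advances only order $r^2$ in $[0,1]$; chaining order $r^{-2}$ adjacent ones spans the interval for total cost of order $\varepsilon/r$, so if $\varepsilon=\varepsilon(r)=o(r)$ at some scale then $d(0,1)=0$. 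Your enumeration forces exactly this: for every fixed scale and location there are infinitely many shortcuts with $\epsilon_k\to 0$ along them, and you may pick them densely spaced. Summability of $\epsilon_k$ does not help, because a bad itinerary selects a thin, cheap subfamily from the dense pool --- what matters is not the total mass of shortcuts but whether some chain of them is cheap, and nothing in your setup rules this out.

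The ingredient you are missing is a \emph{separation} structure on the shortcuts at each level. In the paper's construction (Section~\ref{sec:construction}, Lemma~\ref{lma:constantsexist}), the level-$n$ shortcut centers form a $4\lambda^n$-separated net which moreover avoids the $4\lambda^n$-neighborhoods of all endpoints of lower-level shortcuts. This matters because \eqref{e:general-metric-assumption} applies only when $p_1,p_2$ lie \emph{outside} the shortcut ball $B_\rho(p,r)$: the separation guarantees that in an alternating itinerary the shortcuts adjacent to a given one never sit inside that one's ball, so the hypothesis can actually be invoked to delete the middle shortcut. This yields a combinatorial normal form: any itinerary can be replaced, at no greater cost, by one whose level function is decreasing-then-increasing and which uses at most four shortcuts per level (Lemmas~\ref{lma:weak-id} and~\ref{lma:multiplicity}). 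It is this multiplicity bound --- not summability --- that makes the walking dominate the shortcutting, which in turn gives positivity of $d$, the single-big-jump statement of Lemma~\ref{lma:largegaps}, and the two-sided ball comparison of Lemma~\ref{lma:inclusions}, from which Ahlfors regularity follows. Your density argument for (3) is essentially the paper's and works once the shortcuts form a net (coarsely dense in the needed sense, rather than a literal dense set), but without the separation structure the construction does not produce an Ahlfors-regular, or even a genuine, metric.
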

  
We will first prove Theorem~\ref{thm:general} in Section~\ref{sec:breakingbad}.
After having proven Theorem~\ref{thm:general}, the proof of Theorem~\ref{thm:Heisenbergdistance} 
follows by showing that there is a metric on $\mathbb H$, biLipschitz equivalent to the Carnot-Carath\'eodory metric, 
that satisfies \eqref{e:general-metric-assumption}.
This will be done in Section~\ref{sec:existence}. Theorem~\ref{thm:Hnotminimal}
will then be proven in Section~\ref{sec:BPI}.
Other examples of spaces satisfying the condition in Theorem~\ref{thm:general}
are snowflakes of Ahlfors-regular metric spaces, e.g., 
the real line equipped with the square root of the Euclidean distance, see Theorem~\ref{thm:Ahlforssnowflake}.  

In the second part of the paper we consider distances on $\mathbb H$ 
that have extra homogeneity structure. For example, we  assume that left translations are biLipschitz.
We show that with the assumptions of  
Theorem~\ref{thm:Heisenbergdistance} 
such distances are locally biLipschitz equivalent to the distance $ d_{cc}$.

\begin{thm}\label{thm:positiveinvarianttobilip}
Let $d$ be a 
distance on the Heisenberg group
$\mathbb H$ such that $d \le d_{cc}$ and 
$\H^4_{d}(B_{cc}(0,1))> 0$.
Assume that the left translations in $\mathbb H$ 
are biLipschitz with respect to $d$.
Then $d$ and $ d_{cc}$ are biLipschitz equivalent on compact sets.
\end{thm}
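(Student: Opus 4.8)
Here is the plan. The idea is to reduce the statement to a lower bound for the $d$-distance of small elements from the identity, and then to argue by contradiction: a failure of that bound would equip $(\mathbb H,d)$ with shortcuts so strong that $\H^4_d$ would have to vanish on balls.

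\emph{Reduction.} Let $L\ge 1$ be such that every left translation of $(\mathbb H,d)$ is $L$-biLipschitz, and set $\|g\|:=d(0,g)$. Then $L^{-1}\|x^{-1}y\|\le d(x,y)\le L\|x^{-1}y\|$, so $d$ is biLipschitz to the left-invariant quasidistance $(x,y)\mapsto\|x^{-1}y\|$; from $d\le d_{cc}$ one gets $\|g\|\le|g|_{cc}:=d_{cc}(0,g)$ and $\H^4_d\le\H^4_{cc}$ as Borel measures. Since $g\mapsto\|g\|$ is $1$-Lipschitz for $d_{cc}$ and positive off $0$, a compactness argument shows that the conclusion is equivalent to
\begin{equation}\label{e:plan-small-elements}
\exists\,c>0:\qquad \|g\|\ \ge\ c\,|g|_{cc}\quad\text{whenever } |g|_{cc}\le 1 .
\end{equation}
(One may also note that $\H^4_d(B_{cc}(0,1))>0$ together with left-invariance forces $\H^4_d$ to be comparable to Haar measure — its density with respect to Haar is bounded above by $d\le d_{cc}$ and below because the left translates of a positive-density set cover $\mathbb H$ — but the reduction above already suffices.)

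\emph{Extracting a shortcut structure.} Suppose \eqref{e:plan-small-elements} fails. Then there are $g_m$ with $r_m:=|g_m|_{cc}\to0$ and $\|g_m\|\le\varepsilon_m r_m$, $\varepsilon_m\to0$; after passing to a subsequence the rescaled directions $\delta_{1/r_m}(g_m)$ converge to a unit element $\bar g$, generating a one-parameter subgroup $H$. Left-invariance makes $g_m$ a ``short step'' of scale $r_m$ available from every point, and taking powers $g_m^k$ — using the submultiplicative bound $\|g^k\|\le C(L)\,k^{\log_2(1+L)}\|g\|$ obtained by repeated doubling — promotes $g_m$ to shortcuts over a whole band of scales: for every $\eta>0$ there is $\rho_0(\eta)>0$ such that at every scale $\rho\le\rho_0(\eta)$ there is $h$ with $|h|_{cc}\asymp\rho$, $\|h\|\le\eta\rho$, and $h$ within $d_{cc}$-distance $\eta\rho$ of a coset of $H$. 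Only boundedly many powers are used at each fixed scale, so the geometric blow-up of the biLipschitz constant under iterated translations stays bounded; all small scales are reached instead by letting $m\to\infty$. (When $L=1$ one has $\|g^k\|\le k\|g\|$ and $\rho_0(\eta)$ may be taken bounded below.)

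\emph{The covering.} The geometric input is that a one-parameter subgroup of $\mathbb H$ has \emph{transverse Hausdorff dimension at most $3$}: the foliation of $\mathbb H$ by left $H$-cosets admits a transversal whose $d_{cc}$-Hausdorff dimension is $3$ if $H$ is horizontal and $2$ otherwise. Using the shortcuts above — at a geometric sequence of scales below $\rho_0(\eta)$, in directions converging to $H$ — one builds a Cantor-type cover of a small $d_{cc}$-ball adapted to this foliation: each refinement replaces $H$-coset segments by the $d$-short polygonal paths through the points $\{z\,h^k\}$ and subdivides the resulting thin-but-$d_{cc}$-long sets transversally. Since each refinement collapses one direction (at most $3$ transverse directions survive) while also gaining a factor controlled by $\eta$, iterating produces, for every prescribed $\delta>0$ and $\epsilon>0$, a cover of that ball of $d$-mesh $<\delta$ with $\sum(\diam_d)^4<\epsilon$. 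Hence $\H^4_d$ vanishes on a small $d_{cc}$-ball, and then on $B_{cc}(0,1)$ by covering it with finitely many left translates — contradicting the hypothesis and proving \eqref{e:plan-small-elements}.

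\emph{Main obstacle.} The covering step is the heart of the matter, and it is essentially the iterated-shortcut analysis carried out for Theorem~\ref{thm:general} in Section~\ref{sec:breakingbad}, now run in the ``supercritical'' regime $\varepsilon_m\to0$ so as to drive the $4$-content to $0$ rather than merely keeping it positive. Three entangled difficulties must be handled at once. (i) \emph{Mesh}: a cover built from shortcuts at a single scale has fixed, non-vanishing $d$-mesh, so the construction must be genuinely recursive over all small scales — a one-level cover provably does not suffice. (ii) \emph{Non-commutativity}: translating a transversal net of scale $\tau$ along an $H$-coset distorts $d_{cc}$ by $\sqrt{\tau\cdot(\text{translation length})}$ because of conjugation in $\mathbb H$, so one must translate only within the current cell and recurse rather than sliding across a whole coset. (iii) \emph{Only biLipschitz left translations}: iterating a left translation inflates its constant geometrically, which is exactly why only boundedly many powers of each $g_m$ may be used per scale, and why all small scales are obtained by varying $m$. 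Verifying that the Hausdorff-dimension count dominates the accumulated conjugation errors, and organizing the recursion so that both the $d$-mesh and the total $4$-content can be pushed below arbitrary thresholds, is where all the effort goes.
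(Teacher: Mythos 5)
Your reduction (to showing $\|g\|\ge c\,|g|_{cc}$ for $|g|_{cc}\le 1$) is reasonable, but you assert a \emph{uniform} biLipschitz constant $L$ for all left translations without justification; the hypothesis only says each translation is biLipschitz, and upgrading this to a uniform $L$ on compact sets is itself a step (the paper invokes a Baire Category argument, Lemma~6.7 of \cite{LeDonne1}). Granting that, the doubling bound $\|g^k\|\le C(L)k^{\log_2(1+L)}\|g\|$ is both unnecessary and weaker than what you already have: with uniform $L$, telescoping $d(g^{j-1},g^j)=d(\tau_{g^{j-1}}(0),\tau_{g^{j-1}}(g))\le L\|g\|$ gives $\|g^k\|\le kL\|g\|$, linear in $k$, which beats $k^{\log_2(1+L)}$ for every $L>1$. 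Without uniformity the doubling recursion does not close either, since it would require control of $L(g^{2^j})$ that you do not have.

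The genuine gap, however, is the covering step. You correctly identify it as the heart of the argument and as ``where all the effort goes,'' but you do not carry it out: the one-parameter subgroup $H$, the transverse-dimension count, the recursive Cantor cover, and the bookkeeping of conjugation errors are all left as a plan. The paper avoids this machinery entirely by a much shorter packing argument. Assuming the conclusion fails, for each $N$ one finds $p$ with $r_N:=d_b(0,p)<\frac1{2N}$ and $d_b(0,p)>LN\,d(0,p)$; the linear bound shows $d(0,p^n)\le nL\,d(0,p)\le r_N$ for $n\le N$, so the $N+1$ points $p^0,\dots,p^N$ all lie in $B_d(0,2r_N)$, yet are $r_N$-separated in $d_b$ (this uses $d_b(0,p^{j-i})\ge d_b(0,p)$, a specific feature of the box distance for small vertical-ish elements). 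Left-translating by a maximal $4r_N$-separated net $\{q_i\}$ for $d$ in $B_{d_b}(0,1)$ packs $N\cdot\#I_N$ disjoint $d_b$-balls of radius $r_N/2$ into $B_{d_b}(0,2)$, forcing $\#I_N\lesssim 1/(N r_N^4)$. Since $\{B_d(q_i,8r_N)\}$ covers $B_{d_b}(0,1)$ with $d$-diameters $\le 16r_N\to0$, the Hausdorff sum $\#I_N(16r_N)^4\lesssim 1/N\to0$ gives $\H^4_d(B_{d_b}(0,1))=0$, a contradiction. No recursive cover, no foliation, no transverse dimension is needed; a single short element $p$ at a single scale $r_N$, iterated $N$ times, already forces the measure to vanish in the limit $N\to\infty$.
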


We remark that the assumptions in Theorem~\ref{thm:positiveinvarianttobilip} are necessary.
Indeed, if we don't assume  $d \le d_{cc}$, then as a counterexample
one can take two sub-Riemannian distances on $\mathbb H$ that have two different  horizontal bundles.
If we don't assume  
$\H^4_{d}(B_{cc}(0,1))> 0$, then a counterexample is given by every Riemannian left-invariant distance.
Moreover, the distance $\min\{1, d_{cc}\}$ shows that the conclusion of the theorem may not be global.



We conclude the paper by showing that for distances that are biLipschitz equivalent to $d_{cc}$
the metric differentiation does not hold in general. 
Kirchheim's result in 
 \cite{Kirchheim}
can be stated as the fact that 
every semi-distance $d$ in $\R^n$ that is smaller than the Euclidean distance
is metrically differentiable, i.e., 
 at almost every point its blow-up is a homogeneous semi-distance.
Similarly, by \cite{Pauls01}, 
we know  that on Carnot groups semi-distances  smaller than $ d_{cc}$ 
 are metrically differentiable
   but only in the horizontal directions.  
Regarding non-horizontal directions,  from \cite{Kirchheim_Magnani} 
we know that there is
a distance 
 in the Heisenberg group that is
a counterexample to metric differentiability, although it is not biLipschitz to $d_{cc}$.
As the last result of this paper we give in Section~\ref{sec:countermetricdif} another pair of
counterexamples to metric differentiability that are biLipschitz equivalent to $ d_{cc}$
 and whose blow-ups even fail  self-similarity, 
which is a weaker property than homogeneity.
If $\{\delta_\lambda\}_{\lambda>0}$ denotes the standard one-paramenter family of isomorphisms of 
$\mathbb H $, see Section \ref{sec:Heis},
a
(semi-)distance $d$ is {\em self-similar}
 if there exists 
some $\lambda>1$ for which $d(\delta_\lambda (p),\delta_\lambda (q)) =\lambda d(p,q)$, for all $p,q\in \mathbb H $.
In the following result, by {\em a blow-up} of a distance $d$
we mean any pointwise limit of the functions 
\[(p_1,p_2)\mapsto \dfrac{1}{\lambda_j} d( q_j \delta_{\lambda_j} ( p_1), q_j  \delta_{\lambda_j} ( p_2)) ,\]
as 
$\lambda_j \to 0$ and $q_j\in \mathbb H $.

\begin{thm}[Failure of Kirchheim-metric differentiation for biLipschitz maps]\label{thm:fail}
There exist two distances $d_1, d_2$ on $\mathbb H $ that are biLipschitz equivalent to $d_{cc}$ such that
\begin{enumerate}
\item The distance $d_1$ is left-invariant, but no blow-up of $d_1$ is self-similar.
\item No blow-up of $d_2$ is left-invariant nor self-similar.
\end{enumerate}
\end{thm}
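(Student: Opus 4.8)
The plan is to run the shortening construction of Sections~\ref{sec:breakingbad} and~\ref{sec:existence} in a ``quantitatively mild'' regime. Every shortcut will be inserted along a vertical (central) segment, as in Section~\ref{sec:existence}, and will contract distances by one fixed factor $\mu\in(0,1)$, never more. If $d$ is any distance obtained from $d_{cc}$ in this way, then $d\le d_{cc}$ is immediate, while $d\ge\mu\, d_{cc}$ follows by reading any $d$-admissible chain $z_0,\dots,z_N$ as a $d_{cc}$-chain with the same vertices: each step is a $d_{cc}$-step whose weight has been multiplied by $1$ or by $\mu$, so the $d$-length of the chain is at least $\mu\sum_i d_{cc}(z_{i-1},z_i)\ge\mu\, d_{cc}(z_0,z_N)$. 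Hence every such $d$ is biLipschitz equivalent to $d_{cc}$, and the only remaining freedom is the \emph{shortcut pattern}: at which scales, and over which base points, a vertical shortcut is switched on. The whole game is to choose this pattern so that the pattern one still sees after passing to any blow-up remains ``aperiodic''. We use repeatedly that a vertical displacement $\tau$ has $d_{cc}$-size $\asymp\sqrt{|\tau|}$, so that the shortcut ``at scale $r$'' acts on vertical displacements $\asymp r^2$, and that, by the elementary inequality $\sum_k\sqrt{\tau_k}\ge(\sum_k\tau_k)^{1/2}$ for $\tau_k\ge0$ (together with the bound $d\ge\mu\, d_{cc}$), a vertical move by $\tau$ costs at least $\mu\, d_{cc}(0,(0,0,\tau))$, with equality only when the whole move is realized by a single shortcut of the right size. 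Thus the set of scales at which shortcuts sit is legible from the metric.

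For $d_1$ we switch on the vertical shortcut of strength $\mu$ over \emph{every} base point -- so the construction is equivariant under left translations and $d_1$ is left-invariant -- but only at the scales $\{2^{-s}:s\in\mathcal S\}$, where $\mathcal S\subset\R$ is a Delone set (uniformly discrete and relatively dense) whose orbit closure under translations of $\R$ contains no set invariant under a nonzero translation; a Sturmian, or more generally a cut-and-project, point set has both properties. Uniform discreteness keeps the shortcut scales from colliding under blow-up, while relative density guarantees a shortcut within a bounded multiplicative factor of every scale, so no blow-up of $d_1$ is the plain Carnot metric (which \emph{is} self-similar); it therefore suffices to rule out self-similar vertical-shortcut metrics. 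On the one hand, $\mathcal S$ is recoverable from $d_1$: by the previous paragraph the vertical distance profile $t\mapsto d_1(0,(0,0,t))$ attains its least possible value $\mu\, d_{cc}(0,(0,0,t))$ exactly at the displacements whose scale lies in $\mathcal S$, and this recovery intertwines the $\{\delta_\lambda\}$-action with translations of $\R$ and is continuous for local uniform convergence of distances. On the other hand, every blow-up $d_\infty$ of $d_1$ is again a left-invariant vertical-shortcut metric whose scale set is a limit of translates of $\mathcal S$, hence lies in the orbit closure of $\mathcal S$. Were $d_\infty$ self-similar with factor $\lambda>1$, then $d_\infty(0,(0,0,\lambda^2 t))=\lambda\, d_\infty(0,(0,0,t))$ for all $t$, which forces the scale set of $d_\infty$ to be invariant under a nonzero translation -- a contradiction. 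This proves part~(1).

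For $d_2$ we keep the scale set $\{2^{-s}:s\in\mathcal S\}$ but now let the shortcut data depend on the base point. At scale $r$ we partition $\mathbb{H}$ into blocks of size comparable to $r$ -- with a large but fixed comparison constant, chosen so that it never pays to leave one's block in order to use a shortcut elsewhere, yet the block structure does not degenerate under blow-up -- and over each block we use one of two contraction factors $\mu_1\ne\mu_2$ in $(0,1)$; the assignment of factors is governed by a fixed aperiodic hierarchical scheme (for instance a primitive substitution tiling, or a quasicrystal, with blocks arising as the hierarchy levels) whose orbit closure under all left translations and all dilations of $\mathbb{H}$ contains no translation-invariant coloring. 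Then $d_2$ is biLipschitz to $d_{cc}$ as before (with constant $1/\min\{\mu_1,\mu_2\}$), and because the blocks are coarse the local contraction factor is recoverable -- left-translation- and dilation-equivariantly, continuously for local uniform convergence -- from the vertical distance profile $t\mapsto d_2(x,x\cdot(0,0,t))$, which near $t\asymp r^2$ equals the factor of the block containing $x$ times $d_{cc}(0,(0,0,t))$. Consequently the coloring of any blow-up of $d_2$ lies in the orbit closure of the original coloring: it is not translation-invariant, so the blow-up cannot be left-invariant (left-invariance would force its vertical profiles to be independent of the base point, i.e.\ the coloring to be constant); and its scale set, being an element of the orbit closure of $\mathcal S$, is not invariant under any nonzero translation, so -- exactly as for $d_1$ -- the blow-up is not self-similar. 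This gives part~(2).

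The routine part of this plan is the metric bookkeeping: that switching on countably many uniformly mild vertical shortcuts yields a genuine distance biLipschitz to $d_{cc}$. This is the estimate of the first paragraph together with the locality property~\eqref{e:general-metric-assumption}, established for $\mathbb{H}$ in Section~\ref{sec:existence}; locality is precisely what makes ``the scale-$r$ shortcut over $x$'' a well-defined modification of $d_{cc}$ and what lets one read the pattern off vertical profiles. I expect the main obstacle to be the combinatorial design together with the passage to the limit: one must exhibit shortcut patterns whose \emph{entire} orbit closure -- under translations of $\R$ for the scale set, and under the left translations and dilations of $\mathbb{H}$ for the $d_2$-coloring -- avoids the forbidden symmetries, and one must verify that a blow-up of a vertical-shortcut metric is again a vertical-shortcut metric carrying exactly the limiting pattern, i.e.\ that in the limit no shortcut is created and none destroyed, and that the block structure neither explodes nor collapses. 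Insisting that the pattern be uniformly aperiodic (linearly repetitive) handles both points: compactness of the orbit closure produces the blow-up limit, and minimality of the orbit closure transports aperiodicity to every such limit. Finally, note that $d_1$ is left-invariant by construction, so the substance of part~(1) is the absence of self-similar blow-ups; part~(2) merely adds the spatial (coarse-block) aperiodicity needed to also destroy left-invariance of the blow-ups.
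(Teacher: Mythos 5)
Your high-level strategy is genuinely parallel to the paper's: vertical shortcuts with a fixed contraction factor, aperiodic placement of scales, and (for $d_2$) base-point dependence that survives blow-up. Your combinatorial framing via a Delone set with aperiodic orbit closure is an interesting alternative to the paper's explicit prime-based indicator $a\colon\N\to\{0,1\}$, and the aperiodicity you need from a cut-and-project set is exactly of the right flavor. But there is a genuine gap, and it sits precisely where you yourself flag ``the main obstacle''.

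Your argument hinges on two unproved claims. The first is the \emph{recoverability} claim: that the set of scales at which shortcuts sit can be read off from the vertical profile $t\mapsto d_1(0,(0,0,t))$, quantitatively and \emph{stably under pointwise convergence of metrics}, and that this recovery intertwines $\delta_\lambda$ with translations of $\log$-scale. For this you need uniform two-sided estimates on the profile: an upper bound strictly below $d_{cc}$ near shortcut scales, and a lower bound \emph{strictly above} $\mu\,d_{cc}$ away from them, with a margin that persists in the limit; $d\ge\mu\,d_{cc}$ alone is not enough, because equality in that trivial bound is not an open condition and would not survive the blow-up limit. These estimates are not routine bookkeeping: the paper devotes Lemmas~\ref{lma:uniformlowerbounds-1}, \ref{lma:uniformlowerbounds-2}, and \ref{lma:uniformupperbound} precisely to producing a continuous lower-bound function $f(t)>\frac12$ on the interval between adjacent shortcut scales, a uniform lower bound $\frac{1}{\sqrt3}>\frac12$ at non-shortcut scales, and a matching upper bound $\frac12+\epsilon$ near shortcut scales. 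The interval lower bound (Lemma~\ref{lma:uniformlowerbounds-1}) is what rules out self-similar blow-ups whose dilation factor is not an integer power of two; you need an analogue of this before the ``translation-invariance of the scale set'' contradiction is available. None of these estimates appear in your proposal, and without them the claim that the minimum $\mu\,d_{cc}$ is attained \emph{only} at shortcut scales, stably under blow-up, is not established.

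The second unproved claim is structural: that a blow-up of a vertical-shortcut metric is again a vertical-shortcut metric carrying the limiting pattern. You assert that linear repetitivity ``handles'' this, but minimality of the orbit closure of the \emph{pattern} says nothing by itself about the metric limit: you would still have to show that in the limit no shortcuts are created, none destroyed, and that the block structure for $d_2$ neither explodes nor collapses. The paper side-steps this structural reduction entirely: rather than classifying blow-up metrics, it works numerically with the profile of $\rho$, transferring the bounds of Lemmas~\ref{lma:uniformlowerbounds-1}--\ref{lma:uniformupperbound} through the precision control \eqref{e:precision} and deriving a contradiction directly from the arithmetic property of $a$ (Lemma~\ref{lma:norepeat}). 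I would recommend either proving the quantitative lemmas in your framework (which would then make the recoverability argument rigorous) or, better, dropping the attempt to classify blow-ups and arguing numerically as the paper does. Also note that the paper's $d_2$ construction is considerably simpler than your substitution-tiling scheme: it merely restricts shortcuts to lie over the centers $D_n$ of dyadic cubes in $\R^2$, and then Lemmas~\ref{l:off-grid} and \ref{l:grid-upper} yield the dichotomy of vertical profiles that destroys left-invariance of any blow-up.
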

 Both Theorem~\ref{thm:positiveinvarianttobilip} and 
Theorem \ref{thm:fail} are proved in Section \ref{sec:BiLipschitz}.

\section{Preliminaries}\label{sec:preli}

We begin by recalling the definition of Hausdorff measures on a metric space $(X,d)$.  Let $Q > 0$.  Then for $A \subseteq X$, one defines
\[
 \H^Q_d(A) := \lim_{s \to 0^+} \inf \left\{ \sum_{i=1}^\infty (\diam E_i)^Q : A \subseteq \bigcup_i E_i \text{ an open cover}, \diam(E_i) < s\right\}.
\]
 We say that $\H^Q$ is the Hausdorff $Q$-measure of $(X,d)$.  It is known that the Hausdorff $Q$-measure is Borel regular although it may not be locally finite.

Let $Q > 0$.  Recall that a metric measure space $(X,d,\mu)$ is said to be Ahlfors $Q$-regular if there exists $C \geq 1$ so that
  \[\frac{1}{C} r^Q \leq \mu(B(x,r)) \leq C r^Q, \qquad \forall x \in X, r < \frac{\diam X}{2}.\]
We remark that if $(X,d,\mu)$ is Ahlfors $Q$-regular, then 
 $(X,d,\H^Q)$ is Ahlfors $Q$-regular.

A biLipschitz map $f$ between metric spaces $(X,d)$ and $(X',d')$ is said to be
$C$-{\em conformally biLipschitz} with scale factor $\lambda > 0$ if $f$ is $C$-biLipschitz between
the metric spaces $(X, \lambda d)$ and $(X',d')$.
Another term, coming from
Banach space theory,
 for the same notion is {\em quasi-similarity}.

\begin{definition}[BPI space]\label{def:BPI}
An Ahlfors $Q$-regular metric space $(X,d)$ is said to be a \emph{BPI} (``big pieces of itself'') {\em space} if there exist
constants $C \ge 1$ and $\theta > 0$ such that for all $x_1,x_2 \in X$ and
$0 < r_1 , r_2 < \diam(X)$ there is a closed set $A \subseteq B(x_1,r_1)$ with $\mathcal{H}^Q(A) \ge \theta r_1^Q$
and if there is a $C$-conformally biLipschitz embedding $f \colon A \to B(x_2,r_2)$ with scale factor $r_2/r_1$.
\end{definition}

\begin{definition}[BPI equivalence] 
Two BPI spaces $(X,d)$ and $(X',d')$ of the same dimension $Q$ are called \emph{BPI equivalent}
if there exist constants $\theta>0$ and $C>0$ such that for each $x \in X$, $x' \in X'$ and radii
$0 < R < \diam(X)$, $0 < R' < \diam(X')$ there exist a subset $A \subset B(x,R) \subset X$
with $\H^Q_d(A)\ge \theta R^Q$ and an $C$-conformally biLipschitz
embedding $f \colon A \to B(x',R')$ with scale factor $R'/R$.
\end{definition}

\begin{definition}[Looking down] 
Let $(X,d)$ and $(X',d')$ be BPI metric spaces of Hausdorff dimension $Q$.
The space $(X,d)$ is said to \emph{look down on} $(X',d')$ if there is a closed set $A \subset X$
and a Lipschitz map $f \colon A \to X'$ such that $f(A)$ has positive Hausdorff $Q$-measure. If
also $X'$ looks down on $X$, then $X$ and $X'$ are called \emph{look-down equivalent}.
\end{definition}


\subsection{The Heisenberg group and its distances}
\label{sec:Heis}

The Heisenberg group $\mathbb H$ is the simply connected Lie group whose Lie algebra is generated by three vectors $X,Y,Z$ with only non-zero relation $[X,Y]=Z$.
Via exponential coordinates it can be identified as the manifold $\R^3$ equipped with Lie multiplication:
\[  p\cdot q = \left( x_p + x_q, y_p + y_q, z_p + z_q + \frac12 (x_py_q - y_px_q) \right). \]
It follows easily from the definition that the origin $(0,0,0) \in \mathbb H$ is the identity element and that the center of the group is
\[ {\rm Z}({\mathbb H}) = \{(0,0,z) : z \in \R\}. \]
For each $\lambda > 0$, the Heisenberg group has an automorphism defined as
\begin{equation}
\label{hom:dil}
\delta_\lambda (x,y,z) := (\lambda x, \lambda y,   \lambda^2   z).
\end{equation}
A left-invariant (semi-)distance $d$ is {\em homogeneous}, with respect to \eqref{hom:dil}, if for all $\lambda>0$
\begin{equation}
\label{eq:homog:dist}
d(\delta_\lambda (p),\delta_\lambda (q)) =\lambda d(p,q), \qquad \forall p,q\in \mathbb H.
\end{equation}

Our main example of homogeneous distance is the following.
We introduce the {\em box norm}
 \[
 \norm{p} := \max\left\{|x_p|,|y_p|, \sqrt{|z_p|}\right\}.
 \]
 We define the  {\em box distance} as 
 \[
    d_b(p,q) := \norm{p^{-1}q}.
 \]
Clearly, $d_b$ is left-invariant and it satisfies \eqref{eq:homog:dist}.
To check that it satisfies the triangle inequality we need to show that 
 \[
 \norm{p\cdot q} \leq  \norm{p} + \norm{q}. 
 \] 
 First,
 \[  |x_{p\cdot q}|
=
|x_p +x_q|
\leq 
 |x_p| +|x_q|
 \leq 
 \norm{p} + \norm{q},
 \]
 and analogously for the $y$ component.
 Second,
  \begin{eqnarray*}
 \sqrt{   |z_{p\cdot q}|}
&=&
\sqrt{\left|z_p +z_q +\dfrac{1}{2} (x_p y_q - x_q y_p) \right|}\\
&\leq &
\sqrt{|z_p| +|z_q| +|x_p| |y_q| + |x_q| | y_p |}
\\&\leq& 
\sqrt{  \norm{p}^2 + \norm{q}^2 +2  \norm{p}  \norm{q} } 
\\& \leq &
 \norm{p} + \norm{q}.
 \end{eqnarray*}
Explicitely, the box distance is 
 \begin{equation}\label{eq:box:dist}
  d_b(p_1,p_2) = \max\left\{|x_1-x_2|,|y_1-y_2|, \sqrt{\left|z_1-z_2-\frac12(x_1y_2-x_2y_1)\right|}\right\}.
 \end{equation}


 Given a homomorphism $L : {\mathbb H} \to {\mathbb H}$, one can define the Jacobian to be
 \[
   J(L) = \frac{\H^4_{d_b}(L(B_{d_b}(0,1)))}{\H_{d_b}^4(B_{d_b}(0,1))}.
 \]
 Let $f : ({\mathbb H},d_b) \to ({\mathbb H},d_b)$ be a Lipschitz map.  Pansu proved in \cite{Pansu} that for almost every $x \in {\mathbb H}$ there exists a Lipschitz homomorphism $Df(x) : {\mathbb H} \to {\mathbb H}$ (the Pansu-derivative of $f$ at $x$) so that
 \[
   Df(x) (g) = \lim_{\lambda \to 0} \delta_{1/\lambda} (f(x)^{-1} f(x \delta_\lambda(g))).
 \]
 This result was extended to Lipschitz maps whose domains are measurable subsets $A \subseteq \mathbb H$ by Magnani in \cite{magnani-area}.  Magnani also used the Pansu-derivative in conjunction with the Jacobian to get the following area formula:
 \begin{equation}
  \int_{\mathbb H} N(f,A,y) d\mathcal{H}^4_{d_b}(y) = \int_A J(Df(x))\,d\mathcal{H}^4_{d_b}(x). \label{e:area-formula}
 \end{equation}
 Here, $N(f,A,y)$ is the multiplicity of $f$ with respect to the set $A$.
 
\subsection{Shortening distances}

Given a metric space $(X,\rho)$, a symmetric function $c:X\times X \to [0,\infty)$ such that $c\leq \rho$ will be called a {\em cost function}.
We denote by $\S$ all those pairs of points $(x,y)\in X\times X$ such that $c(x,y)< \rho(x,y)$
$$\S:=\{ (x,y)\in X\times X\;:\; c(x,y)< \rho(x,y)\}.$$
An element in $\S$ will be called
 {\em shortcut} (or flight or tunnel).
If we have $N\in \N$ and $x_0,x_1,\ldots,x_N\in X$ then the $N$-tuple  ${\bf x} = (x_0,x_1,\ldots,x_N)$
will be called an {\em itinerary} from the extreme points $x_0$ to $x_N$ and we set ${\rm Ext} ({\bf x}):=(x_0,x_N)$
and $\ell({\bf x}) := N$.
We will denote by $\I$ the collection of all itineraries in $X$, i.e.,
$$\I:=\{(x_0,x_1,\ldots,x_N) \;:\; N\in \N, \,x_j\in X   \}.$$
The {\em cost} of an itinerary ${\bf x}= (x_0,x_1,\ldots,x_N) \in \I$ is
$$c ({\bf x}):= \sum_{i=1}^N c(x_{i-1}, x_i).$$
The distance $d$ associated to the cost function $c$ is defined as
\begin{equation}\label{eq:ddef}
d(x,y): = \inf\{ c ({\bf x}) \;:\; {\bf x} \in \I ,\, {\rm Ext} ({\bf x})=(x,y)\}.
\end{equation}

\begin{remark} \label{r:d'-leq-c}
It is not too hard to verify symmetry and the triangle inequality for $d$ and so $d$ is a semi-distance on $X$.  If there is another distance $d'$ on $X$ such that
\[
  d'(x,y) \leq c(x,y), \qquad \forall x,y \in X,
\]
then by the triangle inequality for $d'$, we also have that
\[
  d'(x,y) \leq d(x,y), \qquad \forall x,y \in X,
\]
and so $d$ is then a distance.
\end{remark}

We define the subset of {\it alternating itineraries} as
$$\I_A:=\{{\bf x} \in \I \;:\;  (x_{j-1},x_{j})\in \S \iff j \text{ even}    \}.$$
Colloquially speaking, for each of these itineraries, one walks at every odd step and flies at every even step. Note that we allow for the 
stationary walks, i.e., the itinerary can have   $x_{j-1} = x_{j}$, for some $j$ odd.

We shall assign to each shortcut a natural number that we call {\em level} of the shortcut.
Namely, a function $L \colon \S \to \N$ will be called  a {\em level function}. 
Larger levels will usually indicate shortcuts over smaller distances.  We can also define the level function of an alternating itinerary ${\bf x} = (x_0,\dots,x_N) \in \I_A$ as the function
\begin{eqnarray*}
  L_{\bf x} : \{1,\dots,\lfloor N/2 \rfloor \} &\to& \N \\
  k &\mapsto& L(x_{2k-1},x_{2k}).
\end{eqnarray*}

We say that a function $f : \{1,\dots,n\} \to \R$ is {\em decreasing-increasing} if there is some $k \in \{1,\dots,n\}$ for which $f|_{[1,k]}$ is decreasing and $f|_{[k+1,n]}$ is increasing (both not necessarily strictly monotonically).  We can then define a further subset of itineraries with decreasing-increasing level functions:
$$\I^*:=\{{\bf x} \in \I_A \;:\;  L_\x \text{ is decreasing-increasing}    \}.$$

\section{Breaking biLipschitz equivalence using shortcuts}\label{sec:breakingbad}

In this section we prove Theorem~\ref{thm:general}. Let $(X,\rho)$ and $\lambda$ be as in the assumptions of Theorem~\ref{thm:general}.

\subsection{Constructing the shortcuts}
\label{sec:construction}
Let $(\alpha_n)_{n=1}^\infty$ be a sequence of real numbers in $(0,1)$ such that $\alpha_n \downarrow 0$. 
The number $\alpha_n$ will be the ratio of the cost of the level $n$ shortcut compared to the original distance of the shortcut.

Let us define the shortcuts one level at a time.
We define inductively the level $n$ shortcuts $\S_n \subset X \times X$, for $n \in \N$ as follows.
We set $c_E \ge 4$ to be a constant that we now fix. 
Let $\mathcal{N}_n := \{x_i\}$ be a set of points in
\[
 X \setminus \bigcup_{j=1}^{n-1}\bigcup_{(x,y) \in \S_j} B_\rho(\{x,y\},
4\lambda^n)
\]
such that $\rho(x_i,x_j) \geq 4 \lambda^n$ and $X \subseteq \bigcup_i B_\rho(x_i,c_E \lambda^n)$. 
It may be that one could choose $c_E$ so that no such $\mathcal{N}_n$ exists.
We show later in Lemma~\ref{lma:constantsexist} that there is always a choice
of the constants $\lambda$ and $c_E$ for which the set $\mathcal{N}_n$ exists.

Using assumption (2) of Theorem~\ref{thm:general} we select for each $i$ points $q_{i,1}, q_{i,2} \in B_\rho(x_i,\lambda^n)$ such that
\[
\rho(q_{i,1}, q_{i,2}) \ge \lambda^{n+1} 
\]
 and
 \begin{equation}\label{e:general-metric-assumption2}
 \rho(p_1, p_2) \le \rho(p_1,q_{i,1}) + \rho(p_2, q_{i,2}) \qquad \text{for all }p_1,p_2 \notin B_\rho(x_i,\lambda^n).
\end{equation}
Now define the level $n$ shortcuts as
\[
 \S_n := \{(q_{i,1}, q_{i,2})\,:\, i\} \cup \{(q_{i,2}, q_{i,1})\,:\, i\},
\]
their corresponding costs as
\[
 c(q_{i,1}, q_{i,2}) := c(q_{i,2}, q_{i,1}) := \alpha_n\rho(q_{i,1}, q_{i,2})
\]
and their level as
 \[
L(q_{i,1}, q_{i,2}) :=  L(q_{i,2}, q_{i,1}) := n.
\]
Finally, let
\[
 \S := \bigcup_{n=1}^\infty \S_n
\]
and define $d$ as in \eqref{eq:ddef}.

We now prove the existence of the sets $\mathcal{N}_n$ for certain choices of $\lambda$ and $c_E$.

\begin{lemma}\label{lma:constantsexist}
  There exists some $\lambda_0 \in (0,1/4)$ depending only on the Ahlfors regularity of
  $(X,\rho)$ such that if we set $\lambda \leq \lambda_0$ 
  and $c_E = 8 + \frac1\lambda$, 
  then we can always find $\mathcal{N}_n$.
\end{lemma}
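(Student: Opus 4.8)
The plan is to take, for every $m$, the set $\mathcal{N}_m$ to be a \emph{maximal} $4\lambda^m$-separated subset of
\[
Y_m := X \setminus \bigcup_{j=1}^{m-1}\bigcup_{(x,y)\in\S_j} B_\rho(\{x,y\}, 4\lambda^m),
\]
which exists by Zorn's lemma; such a set is automatically $4\lambda^m$-separated and contained in $Y_m$, so the only thing to check is the covering $X \subseteq \bigcup_{x\in\mathcal{N}_m} B_\rho(x,c_E\lambda^m)$ with $c_E = 8+\tfrac1\lambda$. I would fix the Ahlfors data $C\ge 1$, $Q>0$ of $(X,\rho)$, choose $R=R(C,Q)>4$ with $R^Q>2C^2\,4^Q$, and set $\lambda_0:=1/R\in(0,1/4)$. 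The verification is by induction on $m$: assuming $\mathcal{N}_1,\dots,\mathcal{N}_{n-1}$ — hence $\S_1,\dots,\S_{n-1}$ — have been built with each $\mathcal{N}_j$ a $4\lambda^j$-separated subset of $Y_j$, one checks the covering at level $n$.

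Let $p\in X$. If $p\in Y_n$, maximality gives $x_i\in\mathcal{N}_n$ with $\rho(p,x_i)<4\lambda^n<c_E\lambda^n$, so assume $p\notin Y_n$: then $\rho(p,a)\le 4\lambda^n$ for an endpoint $a$ of some shortcut $(a,b)\in\S_{j_0}$, $j_0\le n-1$, and we may write $a\in B_\rho(x,\lambda^{j_0})$ for its level-$j_0$ centre $x\in\mathcal{N}_{j_0}$. The key point is that \emph{near $a$, the removed region $X\setminus Y_n$ consists only of the two balls coming from this single shortcut}: if $q$ is an endpoint of a level-$j$ shortcut with $j<n$ and $q\notin\{a,b\}$, then $\rho(a,q)\ge 2\lambda^{n-1}=\tfrac2\lambda\lambda^n$. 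Indeed, if $j<j_0$ then $x\in Y_{j_0}$ lies outside $B_\rho(q,4\lambda^{j_0})$, so $\rho(a,q)\ge 4\lambda^{j_0}-\lambda^{j_0}=3\lambda^{j_0}$; if $j>j_0$, writing $q\in B_\rho(x',\lambda^j)$ for its level-$j$ centre $x'\in Y_j$, which lies outside $B_\rho(a,4\lambda^j)$, gives $\rho(a,q)\ge 3\lambda^j$; and if $j=j_0$ then $q$ comes from a different level-$j_0$ centre, so $4\lambda^{j_0}$-separation gives $\rho(a,q)\ge 4\lambda^{j_0}-2\lambda^{j_0}=2\lambda^{j_0}$; in each case this is $\ge 2\lambda^{n-1}$. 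Since $\lambda\le 1/R$ and $R>4$ we have $R+4<2/\lambda$, so every point of $B_\rho(a,R\lambda^n)$ lying outside $B_\rho(a,4\lambda^n)\cup B_\rho(b,4\lambda^n)$ belongs to $Y_n$.

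Ahlfors $Q$-regularity then closes the argument: $\mu(B_\rho(a,R\lambda^n))\ge C^{-1}(R\lambda^n)^Q>2C(4\lambda^n)^Q\ge\mu(B_\rho(a,4\lambda^n))+\mu(B_\rho(b,4\lambda^n))$ by the choice of $R$, so there is a point $z\in B_\rho(a,R\lambda^n)\setminus(B_\rho(a,4\lambda^n)\cup B_\rho(b,4\lambda^n))\subseteq Y_n$; by maximality of $\mathcal{N}_n$ some $x_i\in\mathcal{N}_n$ satisfies $\rho(z,x_i)\le 4\lambda^n$, whence $\rho(p,x_i)\le 4\lambda^n+R\lambda^n+4\lambda^n=(R+8)\lambda^n\le(8+\tfrac1\lambda)\lambda^n=c_E\lambda^n$, using $R\le 1/\lambda$. (The same computation shows $Y_n\ne\emptyset$, so $\mathcal{N}_n$ is a genuine nonempty covering family.) The one routine loose end is the regime where $R\lambda^n\ge\diam(X)/2$, so that the Ahlfors lower bound is unavailable: only finitely many $n$ are then involved and each is handled directly (a single well-chosen point suffices), and for $\mathbb H$, whose diameter is infinite, the issue does not arise. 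I expect the three-case estimate $\rho(a,q)\ge 2\lambda^{n-1}$ to be the crux: it is precisely the device of choosing each centre set to avoid the $4\lambda^j$-neighbourhoods of all coarser shortcuts that prevents the $n-1$ levels of tiny removed balls from conspiring to swallow $B_\rho(a,R\lambda^n)$, something a naive volume count summed over all levels would fail to rule out.
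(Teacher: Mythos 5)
Your proposal is correct and takes essentially the same route as the paper: the crux is the separation estimate $\rho(a,q)\geq 2\lambda^{n-1}$ for distinct shortcut endpoints (matching the paper's inequality \eqref{e:A-spread-out}, with the same three-case argument by comparing levels), combined with an Ahlfors lower-vs-upper volume comparison to exhibit a point of $Y_n$ within distance $\approx\lambda^{n-1}$ of the offending endpoint, then invoking maximality of the net. The only cosmetic differences are that you fix an explicit radius $R$ with $R^Q>2C^2\,4^Q$ and set $\lambda_0=1/R$ rather than choosing $\lambda$ ``small enough'' to satisfy a volume inequality, and you center the final ball at the endpoint $a$ itself rather than at a generic $y\in A$; the paper's argument is the same in substance.
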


\begin{proof}
  Let $\mu$ be a measure on $(X,\rho)$ so that $(X,\rho,\mu)$ is Ahlfors regular (one could use $\mu = \H^Q$ for instance).
  We may suppose by taking $\lambda$ small enough (as we are free to do) and using Ahlfors regularity of $(X,\rho,\mu)$ that
  \begin{equation}
    \mu(B_\rho(x,r/4)) - \mu(B_\rho(y,\lambda r)) - \mu(B_\rho(z, \lambda r)) > 0, \qquad \forall x,y,z \in X, 0 < r < \diam(X).\label{e:small-lambda}
  \end{equation}

  Let $A = \bigcup_{j < n} \bigcup_{(x,y) \in \S_j} \{x,y\}$.  
 By the definition of $A$, each $x \in A$ comes with a pair $x' \in A$ such that $(x,x') \in S_l$ for some $l < n$.
 We claim that 
 \begin{equation}
   \rho(x,y) \geq 2 \lambda^{n-1}, \qquad \forall y \in A \setminus \{x,x'\}. \label{e:A-spread-out}
 \end{equation}
 To see this, taking $y \in A \setminus \{x,x'\}$, there exists $y' \in A$ such that $(y,y') \in S_k$ for some $k < n$.
 Let $x_i^l \in \mathcal{N}_l$ such that $x,x' \in B_\rho(x_i^l,\lambda^l)$ and $x_j^k \in \mathcal{N}_k$ such that 
 $y,y' \in B_\rho(x_j^k,\lambda^k)$.
We consider two cases. Suppose  first that $k = l$. By the $4\lambda^k$ separation of $\mathcal{N}_k$ we then have
 \[
  \rho(x,y) \ge \rho(x_i^l,x_j^k) - \rho(x,x_i^l) - \rho(y,x_j^k) \ge 4\lambda^k - \lambda^k - \lambda^k = 2\lambda^k \ge 2\lambda^{n-1}.
 \]
 Suppose now that $k \ne l$. By symmetry we may assume $k < l$. Then by construction, $x_i^l \notin B_\rho(y,4\lambda^l)$ and
  thus
 \[
  \rho(x,y) \ge \rho(x_i^l,y) - \rho(x,x_i^l) \ge 
  4\lambda^l - \lambda^l = 3\lambda^l 
  \ge 2\lambda^{n-1}.
 \]
 Thus \eqref{e:A-spread-out} is proven.
  
  Let $\{x_i\}$ be a maximal $4\lambda^n$-separated net of $X \setminus B_\rho(A,4\lambda^n)$.
  Let $x \in X$.  Suppose there exists $y \in A$ such that $\rho(x,y) < 4\lambda^n$.
  As $\lambda < 1/4$, we get by \eqref{e:A-spread-out} that the number of balls $\{B_\rho(p,4 \lambda^n)\}_{p \in A}$ 
  that intersect $B_\rho(y,\lambda^{n-1})$ is at most 2.  This, together with \eqref{e:small-lambda}, gives that
  \[
    B_\rho(y,\lambda^{n-1}) \setminus B_\rho(A,4\lambda^n) \neq \emptyset.
  \]
  Thus, there exists some $z \in B_\rho(y,\lambda^{n-1}) \setminus B_\rho(A,4\lambda^n)$.
  As $\{x_i\}$ is also a $4\lambda^n$ covering of $X \setminus B_\rho(A,4\lambda^n)$, 
  we get that there exists some $x_i$ such that $\rho(z,x_i) < 4\lambda^n$.  Altogether, we get that
  \[
    \rho(x,x_i) \leq \rho(x,y) + \rho(y,z) + \rho(z,x_i) < 4\lambda^n + \lambda^{n-1} + 4 \lambda^n 
    =  \left(8 +  \frac{1}{\lambda} \right)\lambda^n.
  \]  
  In the case when $x \notin B_\rho(A,4\lambda^n)$, we are also done as the set  $\{x_i\}$ is a $4\lambda^n$-cover of $X \setminus B_\rho(A,4\lambda^n)$.
\end{proof}

\subsection{Properties of the new distance}
In this section we point out some properties of the distance $d$, for example, the fact that it is a distance.
We start by showing that $d$ can be equivalently given by infimizing costs over itineraries with decreasing-increasing level functions.
We first show that, if the level function 
on an alternating itinerary
is not decreasing-increasing, then there exists a shorter alternating itinerary with the same endpoints of 
no greater cost.

\begin{lemma}\label{lma:weak-id}
  Suppose $\x = (x_0,\dots,x_N) \in \I_A$ and there exists $j \in 2\N-1$ such that
  $$L(x_{j+2},x_{j+3}) \geq \max(L(x_j,x_{j+1}),L(x_{j+4},x_{j+5})).$$
  Then the itinerary ${\bf x}' = (x_0',\dots,x_{N-2}') \in \I_A$ where
  \[
    x_k' = \begin{cases}
      x_k & k \in \{0,\dots,j+1\}, \\
      x_{k+2}  & k \in \{j+2,\dots,N-2\}, 
    \end{cases}
  \]
  satisfies $\Ext({\bf x}) = \Ext({\bf x}')$ and $c({\bf x}') \leq c({\bf x})$.
\end{lemma}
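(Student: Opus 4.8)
The plan is to verify three things, in order: (i) $\Ext(\x')=\Ext(\x)$; (ii) $\x'\in\I_A$; and (iii) $c(\x')\le c(\x)$, the last being the real content. Claim (i) is immediate, since passing from $\x$ to $\x'$ only deletes the two consecutive points $x_{j+2},x_{j+3}$ and reindexes, so $x'_0=x_0$ and $x'_{N-2}=x_N$. For (ii), observe that every step of $\x'$ is a step of $\x$ occurring at a position of the same parity, with the single exception of the new step $(x'_{j+1},x'_{j+2})=(x_{j+1},x_{j+4})$; since $j$ is odd this step sits at the odd position $j+2$, so it must fail to be a shortcut, and (ii) reduces to verifying $(x_{j+1},x_{j+4})\notin\S$.

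For (iii), only the middle portion changes: $c(\x)$ contains the summands $c(x_{j+1},x_{j+2})+c(x_{j+2},x_{j+3})+c(x_{j+3},x_{j+4})$ in the place where $c(\x')$ contains the single summand $c(x_{j+1},x_{j+4})$, and all the remaining summands of $c(\x)$ and $c(\x')$ coincide. Set $n:=L(x_{j+2},x_{j+3})$, and let $\xi\in\mathcal{N}_n$ together with the associated pair $q_1,q_2\in B_\rho(\xi,\lambda^n)$ be such that $(x_{j+2},x_{j+3})=(q_1,q_2)$ (swapping $q_1$ and $q_2$ if necessary). The crucial point is that $x_{j+1},x_{j+4}\notin B_\rho(\xi,\lambda^n)$: indeed $x_{j+1}$ is one of the two $q$-points of a shortcut of level $m:=L(x_j,x_{j+1})\le n$, and if $m<n$ then $\mathcal{N}_n$ was chosen disjoint from $B_\rho(x_{j+1},4\lambda^n)$, while if $m=n$ the $4\lambda^n$-separation of $\mathcal{N}_n$ forces $x_{j+1}$ out of $B_\rho(\xi,\lambda^n)$ as long as that shortcut and $(x_{j+2},x_{j+3})$ are built over different net points — and symmetrically for $x_{j+4}$ using $L(x_{j+4},x_{j+5})\le n$. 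Granting this, \eqref{e:general-metric-assumption2} applied with $p_1=x_{j+1}$, $p_2=x_{j+4}$ gives
\[
  \rho(x_{j+1},x_{j+4})\le\rho(x_{j+1},q_1)+\rho(x_{j+4},q_2)=\rho(x_{j+1},x_{j+2})+\rho(x_{j+3},x_{j+4}),
\]
and since the odd-position steps $(x_{j+1},x_{j+2})$ and $(x_{j+3},x_{j+4})$ are walks we have $c(x_{j+1},x_{j+2})=\rho(x_{j+1},x_{j+2})$ and $c(x_{j+3},x_{j+4})=\rho(x_{j+3},x_{j+4})$; combining this with $c\le\rho$ and $c(x_{j+2},x_{j+3})=\alpha_n\rho(x_{j+2},x_{j+3})\ge 0$ yields
\[
  c(\x')-c(\x)=c(x_{j+1},x_{j+4})-c(x_{j+1},x_{j+2})-c(x_{j+2},x_{j+3})-c(x_{j+3},x_{j+4})\le-\alpha_n\rho(x_{j+2},x_{j+3})\le 0.
\]
The same separation bookkeeping also gives $(x_{j+1},x_{j+4})\notin\S$ — a pair in $\S$ consists of the two $q$-points over a single net point, and the level bounds $L(x_j,x_{j+1}),L(x_{j+4},x_{j+5})\le n$ keep $x_{j+1},x_{j+4}$ apart from any such pair — which finishes (ii).

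The step I expect to be the main obstacle is the borderline configuration excluded in passing above: a neighbouring flight of level exactly $n$ built over the same net point $\xi$ as the middle flight $(x_{j+2},x_{j+3})$. Then $x_{j+1}$ (or $x_{j+4}$) lies in $B_\rho(\xi,\lambda^n)$, so \eqref{e:general-metric-assumption2} is not directly available; one checks that this forces $x_{j+1}=x_{j+2}$ and $x_j=x_{j+3}$, i.e. a stationary walk flanked by a back-and-forth across a single shortcut. This degenerate case has to be handled on its own — by excising the redundant excursion first (which only lowers the cost) or by observing that such configurations do not occur in the reduced itineraries to which the lemma is applied — after which the estimate above goes through cleanly.
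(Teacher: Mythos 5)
Your argument follows essentially the same route as the paper's proof: localize the comparison to the four-point window $(x_{j+1},\dots,x_{j+4})$, identify the net point $\xi\in\mathcal N_n$ behind the middle shortcut, show $x_{j+1},x_{j+4}\notin B_\rho(\xi,\lambda^n)$ by the same two-case split on whether $L(x_j,x_{j+1})<n$ or $=n$, and then invoke \eqref{e:general-metric-assumption2}. One genuine improvement: you explicitly verify $\x'\in\I_A$, which the paper silently skips even though the lemma asserts it; reducing this to $(x_{j+1},x_{j+4})\notin\S$ and settling it by the same separation estimates is a real addition to the paper's argument.

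The point on which you hedge — the degenerate configuration where a flanking flight is built over the same net point $\xi$ as the middle flight, so that $\{x_j,x_{j+1}\}=\{x_{j+2},x_{j+3}\}$ and hence, by the $\I_A$-constraint on the odd step $j+2$, $x_{j+1}=x_{j+2}=q_2$ and $x_j=x_{j+3}=q_1$ — is in fact where the paper's proof is weakest. The paper dismisses it with ``the claim is obvious,'' but it is not: the old sub-cost of $(x_{j+1},\dots,x_{j+4})$ is $\alpha_n\rho(q_1,q_2)+\rho(q_1,x_{j+4})$ while the new one is $c(q_2,x_{j+4})$, and the triangle inequality only gives $\rho(q_2,x_{j+4})\le\rho(q_1,q_2)+\rho(q_1,x_{j+4})$ with coefficient $1$ rather than $\alpha_n<1$, so deleting only the pair $x_{j+2},x_{j+3}$ can actually \emph{raise} the cost; and if $x_{j+4}=q_1$ then $(x_{j+1},x_{j+4})\in\S$ and $\x'\notin\I_A$ as well. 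Your instinct that this case ``has to be handled on its own'' is therefore correct, and the fix is the one you sketch: excise the entire wasted round trip by replacing $(x_{j-1},x_j,\dots,x_{j+4})$ with $(x_{j-1},x_{j+4})$, which removes \emph{four} points rather than two, keeps the parity pattern intact, and lowers the cost because $c(x_{j-1},x_{j+4})\le\rho(x_{j-1},q_1)+\rho(q_1,x_{j+4})$ and the discarded segment costs at least that. However, this is a different reduction from the one the lemma states, so the degenerate case must be dealt with explicitly either by this four-point excision or by pre-processing the itinerary to remove such back-and-forths before applying the lemma; simply asserting ``the estimate above goes through cleanly,'' as your last sentence does, is not yet a proof.
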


\begin{proof}
  That $\Ext({\bf x}) = \Ext({\bf x}')$ is obvious from construction.  
  Consider the subitinerary ${\bf y} = (x_{j+1},x_{j+2},x_{j+3},x_{j+4})$.  
  We claim that $c(x_{j+1},x_{j+4}) \leq c({\bf y})$, which proves the lemma.
  Let $x \in \mathcal{N}_n$ be the point for which the shortcut $(x_{j+2},x_{j+3})$ was found in $B_\rho(x,\lambda^n)$.
  Then 
  $n= L (  x_{j+2},x_{j+3}  )$. 
  We claim that  
  $x_{j+1},x_{j+4} \notin B_\rho(x,\lambda^n)$.  Indeed, by assumption, $L(x_j,x_{j+1}) \leq n$.
  So first suppose $L(x_j,x_{j+1}) < n$.  Then
  $x$ was found in the complement of
  \[
    B(x_j,2\lambda^n) \cup B(x_{j+1},2\lambda^n).
  \]
  If instead $L(x_j,x_{j+1}) = n$, then let  $y \in \mathcal{N}_n$ be the point for which 
  the shortcut $(x_j,x_{j+1})$ was found
  in $B_\rho(y,\lambda^n)$.
  We may assume that $x\neq y$, otherwise $\{x_j,x_{j+1}\}=\{x_{j+2},x_{j+3}\}$ and the claim is obvious.
Hence,   we have that $\rho(x,y) \geq 4 \lambda^n$ and so
  \[
    B_\rho(x,\lambda^n) \cap B_\rho(y,\lambda^n) = \emptyset.
  \]
  As $x_{j+1} \in B_\rho(y,\lambda^n)$, we get that $x_{j+1} \notin B_\rho(x,\lambda^n)$.
  A similar argument holds for $x_{j+4}$.
  Thus, we have that the claim follows from 
  \eqref{e:general-metric-assumption2} if we set $x_i=x$, $q_{i,1} = x_{j+2}$, $q_{i,2} = x_{j+3}$, $p_1 = x_{j+1}$, and $p_2 = x_{j+4}$.
\end{proof}

\begin{lemma}\label{lma:multiplicity}
  For any $\x \in \I_A$, there exists $\x' \in \I^*$ such that $\Ext(\x) = \Ext(\x')$, $c(\x') \leq c(\x)$, and
  \begin{equation}
    \# L_{\x'}^{-1}(k) \leq 4, \qquad \forall k \in \N. \label{e:4-bound}
  \end{equation}
  Moreover, $\x$ and $\x'$ have the same first and last shortcuts.
\end{lemma}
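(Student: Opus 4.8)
The plan is to repeatedly apply Lemma~\ref{lma:weak-id} to drive an arbitrary alternating itinerary toward one in $\I^*$, and then separately clean up the multiplicity of each level. The key point is that Lemma~\ref{lma:weak-id} says: whenever the level function $L_\x$ has a strict ``local interior peak'' (a position where the level of some even step is $\geq$ the level of both its neighboring even steps), we can delete that even step together with one adjacent odd step, producing a shorter alternating itinerary with the same endpoints and no greater cost. The only configuration of a finite sequence with \emph{no} such deletable peak — allowing for ties — is exactly a decreasing-increasing sequence. So iterating the lemma must terminate (the length $\ell(\x)$ strictly decreases each time), and it terminates only once $L_\x$ is decreasing-increasing, i.e.\ once $\x \in \I^*$.

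First I would make the reduction to $\I^*$ precise. Given $\x \in \I_A$, if $L_\x$ is already decreasing-increasing we are done with that phase; otherwise there is an index $m$ in the domain of $L_\x$ where $L_\x$ fails to be decreasing-increasing, and from this one extracts a position $j \in 2\N - 1$ as in Lemma~\ref{lma:weak-id} — concretely, one picks $m$ to be a position where $L_\x$ has a ``valley-then-rise-then-fall'' pattern, so that $L_\x(m) \geq \max(L_\x(m-1), L_\x(m+1))$ with the strict failure of monotonicity forcing genuine applicability (boundary cases where $m=1$ or $m$ is the last index are handled by only comparing to the one neighbor that exists, reading the other as $+\infty$ would be wrong — instead one observes such extreme peaks cannot be the obstruction to decreasing-increasing unless there is also an interior one). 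Applying the lemma gives $\x' \in \I_A$ with $\Ext(\x') = \Ext(\x)$, $c(\x') \leq c(\x)$, $\ell(\x') = \ell(\x) - 2$. Since $\ell$ cannot decrease forever, after finitely many steps we reach some $\tilde\x \in \I^*$ with $\Ext(\tilde\x) = \Ext(\x)$ and $c(\tilde\x) \leq c(\x)$. Crucially, each application of the lemma removes an \emph{interior} even step (one that is not the first or last shortcut, since those are the endpoints of the decreasing and increasing runs and hence never the peak being removed), so the first and last shortcuts of $\tilde\x$ agree with those of $\x$.

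Next I would establish the multiplicity bound \eqref{e:4-bound} for $\tilde\x \in \I^*$. Since $L_{\tilde\x}$ is decreasing-increasing, each level $k \in \N$ is attained on at most two ``runs'': a final stretch of the decreasing part and an initial stretch of the increasing part. So it suffices to show each such run has length at most $2$. This is where one uses the geometry: the level-$n$ shortcuts were built over points $q_{i,1}, q_{i,2} \in B_\rho(x_i, \lambda^n)$ with $\{x_i\} = \mathcal{N}_n$ a $4\lambda^n$-separated set, so any two distinct level-$n$ shortcuts in an itinerary are separated by a walk of $\rho$-length at least $4\lambda^n - 2\lambda^n = 2\lambda^n$, while a level-$n$ shortcut itself has $\rho$-diameter at most $2\lambda^n$ and cost $\alpha_n \rho(q_{i,1},q_{i,2}) < \lambda^n$. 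One then argues that three consecutive level-$n$ shortcuts (on the decreasing run, say) can be short-circuited: the two walks between them plus the middle shortcut can be replaced by a single walk (using assumption \eqref{e:general-metric-assumption2} / the separation of $\mathcal{N}_n$ together with the triangle inequality, much as in Lemma~\ref{lma:weak-id}), strictly decreasing $\ell$ without increasing $c$ and without changing the first/last shortcut — then re-run the $\I^*$-reduction. I would iterate this until every level-run has length $\leq 2$, giving $\x' \in \I^*$ with the desired properties.

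The main obstacle I expect is the termination/bookkeeping: one must interleave two reduction procedures (restoring decreasing-increasing, and shrinking over-long runs) and argue the combined process terminates — the clean way is to note every step strictly decreases $\ell(\x) \in \N$, so any interleaving halts, and at a fixed point neither reduction applies, which is precisely membership in $\I^*$ together with \eqref{e:4-bound}. A secondary subtlety is the precise combinatorial claim that ``no applicable peak in Lemma~\ref{lma:weak-id}'' $\iff$ ``$L_{\tilde\x}$ decreasing-increasing,'' including correct treatment of the two boundary positions of $L_{\tilde\x}$ and of plateaus (equal consecutive levels), for which I would give a short lemma on finite integer sequences. Finally, one checks that ``same first and last shortcut'' is genuinely preserved: in every deletion above, the removed even step lies strictly between two other even steps of the same or higher level on a monotone run, hence is never the first or the last shortcut of the itinerary.
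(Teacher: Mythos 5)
Your proof follows the paper's strategy — iterate Lemma~\ref{lma:weak-id} to termination — and is essentially correct, but the two-procedure framing you use is an unnecessary complication. You treat ``reach $\I^*$'' and ``bound the multiplicity'' as two distinct reductions that must be interleaved, and you sketch the multiplicity cleanup as a new argument re-using the separation of $\mathcal{N}_n$ and \eqref{e:general-metric-assumption2}. But that cleanup \emph{is} Lemma~\ref{lma:weak-id}: three consecutive shortcuts at the same level have $L_{\x}(k+1) = \max(L_{\x}(k),L_{\x}(k+2))$, which is precisely an index to which Lemma~\ref{lma:weak-id} applies. So there is only one reduction, and one should simply iterate Lemma~\ref{lma:weak-id} until \emph{no} index satisfies its hypothesis. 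That single stopping condition, namely $L_{\x'}(i+1) < \max(L_{\x'}(i),L_{\x'}(i+2))$ for all interior $i$, yields \emph{both} conclusions at once: it forces $L_{\x'}$ to be decreasing-increasing (a short exercise about finite sequences), and it forbids three consecutive equal levels, so each value is attained on at most two runs of length at most two, giving \eqref{e:4-bound}. The paper argues exactly this way. Relatedly, your claim that ``the only configuration with no deletable peak is exactly a decreasing-increasing sequence'' is an iff that is false in one direction — a constant sequence like $(1,1,1)$ is decreasing-increasing but still has a deletable peak — which is why stopping at the first moment $\I^*$ is reached, and then launching a second cleanup, is the wrong mental model; the iteration naturally runs a bit past $\I^*$ and that extra running is what gives you \eqref{e:4-bound} for free. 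Your observation about preservation of first and last shortcuts is correct and matches the paper (the removed shortcut is never an extreme one, since Lemma~\ref{lma:weak-id} only applies to an index with valid neighbors on both sides). The boundary-case hand-wringing is not needed: Lemma~\ref{lma:weak-id} simply never touches the two endpoints, and the interior stopping condition alone suffices for the decreasing-increasing conclusion.
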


\begin{proof}
  Given an initial $\x \in \I_A$, we iterate Lemma~\ref{lma:weak-id} until we get an itinerary $\x'$ 
  for which there are no indices that satisfy the hypothesis of Lemma~\ref{lma:weak-id}. 
  As the length of the itinerary shrinks by 2 with each application of Lemma~\ref{lma:weak-id},
  we get that we have to stop after some finite number of iterations.  
  It is elementary to see that if $L_{\bf x'} : \{0,\dots,n\} \to \N$ satisfies
  $$L_{\x'}(i+1) < \max (L_{\x'}(i),L_{\x'}(i+2)), \qquad \forall i \in \{0,\dots,n-2\},$$
  then $L_{\x'}$ is decreasing-increasing, which means that $\x' \in \I^*$.

  Now suppose $\# L_{\x'}^{-1}(k) > 4$ for some $k \in \N$.  As $\x' \in \I^*$, we get that there exists some $i \in 2\N$ so that
  $$L(x_i,x_{i+1}) = L(x_{i+2},x_{i+3}) = L(x_{i+4},x_{i+5}).$$
  But this contradicts the assumption that $\x'$ does not have any indices that satisfy the hypothesis of Lemma~\ref{lma:weak-id}.
  
  Finally, since each application of Lemma~\ref{lma:weak-id} keeps the first and last shortcut of $\x$ unchanged the
  resulting itinerary $\x'$ has the same first and last shortcut as $\x$.
\end{proof}

\begin{proposition}
 The function $d$ is a distance on $X$.
\end{proposition}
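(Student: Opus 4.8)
By Remark~\ref{r:d'-leq-c} the function $d$ is already a semi-distance, so the only point to settle is that $d(x,y)>0$ whenever $x\neq y$. I would not try to produce an auxiliary distance $d'\le c$ as in that remark --- no such $d'$ can depend on $\rho(x,y)$ alone, since subadditivity forces $\liminf_{t\to0^+}f(t)/t>0$ for any amenable subadditive $f$, whereas $c/\rho\to0$ along the deep shortcuts --- so instead I would bound $c(\x)$ from below uniformly over all itineraries $\x$ with fixed endpoints $x\neq y$, setting $R:=\rho(x,y)>0$.

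First I would cut down the class of competitors. Any $\x\in\I$ can be refined to an alternating one: merge two consecutive walk-steps using the triangle inequality for $\rho$ (which does not raise the cost), insert a stationary walk between two consecutive shortcut-steps, and pad the ends; each merge strictly shortens the itinerary, so this terminates at some $\x''\in\I_A$ with $\Ext(\x'')=(x,y)$ and $c(\x'')\le c(\x)$. Lemma~\ref{lma:multiplicity} then yields $\x'\in\I^*$ with $\Ext(\x')=(x,y)$, $c(\x')\le c(\x)$ and $\#L_{\x'}^{-1}(k)\le4$ for every $k$. Hence it suffices to bound $c(\x)$ from below over $\x\in\I^*$ obeying this multiplicity bound.

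I would then use two elementary facts about a level-$n$ shortcut: its $\rho$-length lies in $[\lambda^{n+1},2\lambda^n]$ (its endpoints lie in a common $\rho$-ball of radius $\lambda^n$ and, by construction, are at least $\lambda^{n+1}$ apart), and its cost is exactly $\alpha_n$ times that length. If $\x$ has no shortcut step it is a single walk and $c(\x)=R$. Otherwise let $m$ be the least level occurring in $\x$. By the multiplicity bound the total $\rho$-length of the shortcut steps is at most $4\sum_{n\ge m}2\lambda^n=8\lambda^m/(1-\lambda)$, so the triangle inequality $R\le\sum_i\rho(x_{i-1},x_i)$ gives that the total cost $W$ of the walk steps (which equals their total $\rho$-length) satisfies $W\ge R-8\lambda^m/(1-\lambda)$. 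Since $\x$ contains at least one level-$m$ shortcut, $c(\x)\ge W+\alpha_m\lambda^{m+1}\ge\max\{R-8\lambda^m/(1-\lambda),\ \alpha_m\lambda^{m+1}\}$. As $m\to\infty$ the first entry of this maximum tends to $R$ and the second stays positive, so its infimum over $m\in\N$ is some $\beta(R)>0$; thus $c(\x)\ge\min\{R,\beta(R)\}>0$ for every competitor, and $d(x,y)\ge\min\{R,\beta(R)\}>0$.

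The one genuinely non-routine ingredient is the multiplicity bound from Lemma~\ref{lma:multiplicity}. Without a ceiling on how many shortcuts of a single level an itinerary may use, one could travel along arbitrarily many shortcuts of a very high --- hence very cheap --- level, so the walk steps need carry almost none of the displacement $R$ while the total shortcut cost stays arbitrarily small; the bound ``$\le4$ per level'' is exactly what forces the total shortcut $\rho$-length to be $O(\lambda^m)$, hence forces either a substantial walk cost or a bounded minimal level $m$ (and therefore a level-$m$ shortcut of cost at least the corresponding positive constant). The remaining estimates are routine bookkeeping.
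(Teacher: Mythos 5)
Your proof is correct and follows essentially the same route as the paper's: reduce to a decreasing-increasing alternating itinerary with the per-level multiplicity bound $\le 4$ via Lemma~\ref{lma:multiplicity}, bound the total $\rho$-length of shortcuts by $8\lambda^m/(1-\lambda)$ where $m$ is the least level used, and split according to whether the remaining walk-length or the cheapest shortcut dominates. The paper fixes a threshold $n$ with $8\lambda^n/(1-\lambda)\le\rho(x,y)/2$ and works with a near-optimal competitor, whereas you bound every competitor and then take an infimum over the varying minimal level $m$; the combinatorics and the key estimate are the same. (One small imprecision: for a shortcut-free $\x\in\I^*$ you only get $c(\x)\ge R$ by the triangle inequality, not $c(\x)=R$; and "the second stays positive" is a loose way of saying that the finitely many $m$ with $R-8\lambda^m/(1-\lambda)\le R/2$ each contribute a strictly positive $\alpha_m\lambda^{m+1}$ -- neither affects the conclusion.)
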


\begin{proof}
 The validity of the triangle inequality follows from the definition of the distance as defined in \eqref{eq:ddef}. Symmetry is due to the 
 symmetry of the cost function. What needs to be checked is that $x \ne y$ implies $d(x,y)>0$. 
 In order to show this, suppose that $x,y \in X$ with $\rho(x,y)>0$.
 Let $n \in \N$ be such that 
 \[
  8 \frac{\lambda^{n}}{1-\lambda} \le \frac12\rho(x,y).
 \]
 Let $(\alpha_n)$ be the sequence of positive numbers used to construct the cost function in Section~\ref{sec:construction}. Consider the positive number
 \[
  \varepsilon := \min\left(\frac12\min_{k \in [1,n-1]} \alpha_k\lambda^{k+1}, \frac14\rho(x,y)\right).
 \]
 Let $\x = (x_0,\dots,x_N) \in \I^*$ with 
 $\Ext(\x) = (x,y)$, $c(\x) \leq d(x,y) + \varepsilon$,
 and $\# L_{\x}^{-1}(k) \leq 4$ for all $k \in \N$, which
exists by Lemma~\ref{lma:multiplicity} (remember that using 
stationary walks
every itinerary can be modified to be an alternating itinerary of no greater cost, because of triangle inequality). 
 
 On the one hand, if $L_\x^{-1}([1,n-1]) = \emptyset$, then the   alternating itinerary does not have shortcuts 
 at odd steps and it has them at
  even steps only of level greater that $n$ and with multiplicity at most $4$. Hence, we get
 \begin{eqnarray*}
   d(x,y) & \ge & c(\x) - \varepsilon \ge \sum_{j \text{ odd}}\rho(x_{j-1},x_j) - \varepsilon \ge \rho(x,y) - \sum_{j \text{ even}}\rho(x_{j-1},x_j) - \frac14\rho(x,y)\\
  & \ge & \frac34\rho(x,y) - 4 \sum_{k=n}^\infty 2\lambda^k \ge \frac34\rho(x,y) - 8 \frac{\lambda^{n}}{1-\lambda} \ge \frac14\rho(x,y) > 0,
 \end{eqnarray*}
 where we used that a point in a shortcuts at level $k$ has $\rho$-distance less than $  \lambda^k$ from the center of the ball in which the shortcut was found. 
 On the other hand, if $L_\x^{-1}([1,n-1]) \ne \emptyset$, then,
 if $(x_{\ell-1},x_\ell)$ is a shortcut at level $l<n$ of $\x$, we have 
 \[
  d(x,y) \ge c(\x) - \varepsilon \ge 
 c(x_{\ell-1},x_\ell) - \frac12\min_{k \in [1,n-1]} \alpha_k\lambda^{k+1}
  \ge
  \frac12\min_{k \in [1,n-1]} \alpha_k\lambda^{k+1} > 0,
 \]
 where we used that
 $ c(x_{\ell-1},x_\ell) = \alpha_l \rho(x_{\ell-1},x_\ell) \geq \alpha_l \lambda^l.$
 In both cases $d(x,y) > 0$ as needed.
\end{proof}

\begin{lemma}\label{lma:largegaps}
 Let $x \in X$ and $0 < r < \lambda^n$.  There exists at most one pair $\{q_1,q_2\}$ such that
 $(q_1,q_2) \in \S$, $L(q_1,q_2) < n$ and $\{q_1,q_2\} \cap B_d(x,r) \ne \emptyset$.
\end{lemma}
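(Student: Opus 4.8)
Here is a proof of Lemma~\ref{lma:largegaps}.

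The plan is to argue by contradiction at the level of points. Suppose two \emph{distinct} level-$<n$ shortcut pairs both met $B_d(x,r)$. Distinct shortcut pairs are disjoint (the endpoints of level-$<n$ shortcuts lie in balls of $\rho$-radius $\le\lambda^{k}$, $k<n$, centred at the $4\lambda^{k}$-separated nets $\mathcal N_k$, and by construction the net $\mathcal N_{k'}$ avoids the $4\lambda^{k'}$-balls about endpoints of lower-level shortcuts — this is exactly what underlies \eqref{e:A-spread-out} — so any two points of different level-$<n$ pairs are at positive $\rho$-distance, indeed at $\rho$-distance $\ge 2\lambda^{n-1}$; in particular each point lies in at most one level-$<n$ pair). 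Hence we may pick $a\ne b$ in $B_d(x,r)$ lying in two different level-$<n$ pairs $P_a\ne P_b$, so that
\[
  d(a,b)\le d(a,x)+d(x,b)<2r<2\lambda^n ,\qquad \rho(a,b)\ge 2\lambda^{n-1}.
\]
The lemma will follow once we prove the lower bound
\[
  d(a,b)\;\ge\; 2\lambda^{n-1}-\frac{8\lambda^n}{1-\lambda},
\]
whose right-hand side exceeds $2\lambda^n$ whenever $\tfrac{2}{\lambda}>2+\tfrac{8}{1-\lambda}$, i.e.\ for all $\lambda$ small enough (which we assume).

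Before proving the bound I record one routine estimate: along any itinerary $\x\in\I_A$ satisfying the multiplicity bound \eqref{e:4-bound}, the flights of level $\ge n$ contribute total $\rho$-length at most $4\sum_{k\ge n}2\lambda^k=\tfrac{8\lambda^n}{1-\lambda}$, since a flight of level $k$ joins two points of a ball of $\rho$-radius $\lambda^k$ and there are at most $4$ flights per level. Consequently, for any subitinerary of such an $\x$ that uses only flights of level $\ge n$ and runs from $p$ to $p'$, its cost is at least $\rho(p,p')-\tfrac{8\lambda^n}{1-\lambda}$ (triangle inequality and $c\le\rho$ for walks, together with the previous estimate for the flights).

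To prove the lower bound it suffices to show $c(\x)\ge 2\lambda^{n-1}-\tfrac{8\lambda^n}{1-\lambda}$ for every $\x\in\I^*$ with $\Ext(\x)=(a,b)$ satisfying \eqref{e:4-bound}, since by the remark on stationary walks and Lemma~\ref{lma:multiplicity} every itinerary from $a$ to $b$ has cost at least that of some such $\x$. Fix such an $\x=(x_0,\dots,x_N)$. Because $L_\x$ is decreasing-increasing, the flights of $\x$ of level $<n$ occupy consecutive positions, so we may split $\x=\x_{\mathrm{pre}}\cdot\x_{\mathrm{mid}}\cdot\x_{\mathrm{post}}$, where $\x_{\mathrm{mid}}$ runs from the start $u$ of the first level-$<n$ flight to the end $v$ of the last one (it begins and ends with a flight, and all its flights have level $<n$), while $\x_{\mathrm{pre}}$ (from $a$ to $u$) and $\x_{\mathrm{post}}$ (from $v$ to $b$) use only flights of level $\ge n$. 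If $\x$ uses no level-$<n$ flight at all, then $c(\x)\ge\rho(a,b)-\tfrac{8\lambda^n}{1-\lambda}\ge 2\lambda^{n-1}-\tfrac{8\lambda^n}{1-\lambda}$ and we are done. Otherwise let $P^{(1)},\dots,P^{(m)}$ be the pairs carrying the successive level-$<n$ flights, so $u\in P^{(1)}$ and $v\in P^{(m)}$. If $P_a\ne P^{(1)}$, then $a$ and $u$ lie in different level-$<n$ pairs, so $\rho(a,u)\ge 2\lambda^{n-1}$ and $c(\x)\ge c(\x_{\mathrm{pre}})\ge\rho(a,u)-\tfrac{8\lambda^n}{1-\lambda}\ge 2\lambda^{n-1}-\tfrac{8\lambda^n}{1-\lambda}$; symmetrically if $P_b\ne P^{(m)}$. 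In the remaining case $P_a=P^{(1)}$ and $P_b=P^{(m)}$, whence $P^{(1)}\ne P^{(m)}$ (in particular $m\ge 2$), so there is an index $i$ with $P^{(i)}\ne P^{(i+1)}$; the walk of $\x_{\mathrm{mid}}$ between the flight through $P^{(i)}$ and the flight through $P^{(i+1)}$ joins a point of $P^{(i)}$ to a point of $P^{(i+1)}$, i.e.\ two points in different level-$<n$ pairs, so it has cost $\ge 2\lambda^{n-1}$ (it is a walk), giving $c(\x)\ge 2\lambda^{n-1}$. In every case $c(\x)\ge 2\lambda^{n-1}-\tfrac{8\lambda^n}{1-\lambda}$, which proves the lower bound and contradicts $d(a,b)<2\lambda^n$. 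Hence at most one level-$<n$ pair meets $B_d(x,r)$.

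The one delicate point is the case where the itinerary does exploit the cheap low-level flights: a priori the $\rho$-length bookkeeping breaks down there, since a single level-$\ell$ flight with $\ell<n$ can delete an amount of $\rho$-length comparable to $\rho(a,b)$ itself. The decreasing-increasing normalization furnished by Lemma~\ref{lma:multiplicity} is exactly what controls this, since it forces all the low-level flights into a single contiguous ``excursion'': that excursion must be attached to $a$ and to $b$ by genuine walking of length $\gtrsim\lambda^{n-1}$ unless it begins at $P_a$ and ends at $P_b$, and in that last situation — as $P_a\ne P_b$ — the excursion is itself compelled to make a transition between two distinct pairs, which again costs $\gtrsim\lambda^{n-1}$.
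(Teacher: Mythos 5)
Your argument is correct (for $\lambda$ small enough, which the construction permits), but it takes a genuinely different route from the paper's. The paper's key move is an extension trick: given the normalized itinerary $\x \in \I^*$ joining $q_1$ to $\tilde q_1$ with $c(\x)<2\lambda^n$, it forms ${\bf y}=(q_2,q_2,q_1,x_1,\dots,x_{N-1},\tilde q_1,\tilde q_2)$, whose first and last flights are the two level-$<n$ shortcuts $(q_2,q_1)$ and $(\tilde q_1,\tilde q_2)$ \emph{themselves}. After Lemma~\ref{lma:multiplicity} (which, as it records, fixes the first and last flights), the decreasing-increasing level function of the resulting itinerary attains its maximum at the endpoints and hence \emph{every} flight has level $<n$. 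There is then no level-$\ge n$ bookkeeping to do at all: the single non-stationary walk the paper locates joins two distinct level-$<n$ pairs, so it alone costs $\ge 2\lambda^n$, which contradicts $c(\x)<2\lambda^n$ with no loss. You instead normalize the raw $a$-to-$b$ itinerary and split it into a low-level middle block flanked by level-$\ge n$ portions; this forces you to absorb the $\tfrac{8\lambda^n}{1-\lambda}$ of $\rho$-length that the high-level flights can delete, yielding the weaker bound $d(a,b)\ge 2\lambda^{n-1}-\tfrac{8\lambda^n}{1-\lambda}$, which exceeds $2\lambda^n$ only under the additional constraint $2/\lambda > 2+8/(1-\lambda)$ (roughly $\lambda<0.17$). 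That is a strictly stronger smallness requirement than the paper ever imposes (Lemma~\ref{lma:constantsexist} only needs $\lambda<1/4$, and only for \eqref{e:small-lambda}); since one is free to shrink $\lambda$, this is not a gap, but the extension trick is exactly what lets the paper avoid both the level-$\ge n$ estimate and the extra constraint, getting the clean $2\lambda^n$ lower bound directly. Your ``single contiguous excursion attached by genuine walks'' picture is a nice structural reformulation of the same underlying mechanism.
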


\begin{proof}
 Suppose to the contrary that there exist two disjoint $(q_1,q_2), (\tilde q_1,\tilde q_2) \in \S$ with
 \begin{equation}
  L(q_1,q_2), L(\tilde q_1,\tilde q_2)  < n \label{e:shortcut-lower-bnds}
 \end{equation}
 and $q_1,\tilde q_1 \in B_d(x,r)$. Then
\[
  d(q_1,\tilde q_1) \le d(q_1,x) + d(x,\tilde q_1) < 2r \le 2\lambda^n.
 \]
 Let $\x = (x_0,\dots,x_N) \in \I^*$ with $x_0=q_1,$ $x_N=\tilde q_1$ and
 \begin{equation}
  c(\x) < 2\lambda^n. \label{eq:q1q1small}
 \end{equation} 
We may assume that $N$ is odd, up to adding a stationary walk at the end.
Hence the slightly longer itinerary ${\bf y}=(q_2,q_2,q_1,x_1,\dots,x_{N-1}, \tilde q_1,\tilde q_2)$ is an alternating itinerary.
 By applying Lemma~\ref{lma:multiplicity} to ${\bf y}$
   we know that there exists 
  $  {\bf y}'  \in \I^*$ 
  such that
  $
   c( {\bf y}') \le c( {\bf y}),
  $
  where 
$  {\rm Ext}( {\bf y}')  ={\rm Ext} ( {\bf y}) $.
Notice that, since the construction in the proof of  
Lemma~\ref{lma:multiplicity}
keeps the
  the first and last shortcuts stay the same,
  we have that ${\bf y}'$ is of the form
  \[{\bf y}' = (q_2,q_2,q_1,x'_1,\dots,x'_{N'}, \tilde q_1,\tilde q_2)
   .\]
    We conclude that 
    the itinerary $\x$ may 
    be replaced with no extra cost by an itinerary $\x'$ that is decreasing-increasing and, due 
    to \eqref{e:shortcut-lower-bnds}, 
    we have $L_\x^{-1}([n,\infty]) = \emptyset$.


We remark that
the itinerary $\x'$ cannot have only stationary walks, i.e., 
 there is some $j \in 2\Z$ so that $x'_j \neq x'_{j+1}$. Indeed,  otherwise
 the itinerary
  cannot move away from $\{q_1,q_2\}$, since distinct  shortcuts are separated. 

Hence, there are two distinct shortcuts
$(x'_{j-1},x'_j)$,
$(x'_{j+1},x'_{j+2})$.
Set 
$k_1:= L(x'_{j-1},x'_j)$
$k_2:= L(x'_{j+1},x'_{j+2})$. Recall that $k_1,k_2<n$.
Let $a, b$ are the centers 
  of the balls in which the shortcuts $(x'_{j-1},x'_j)$,
$(x'_{j+1},x'_{j+2})$ were found with radii $\lambda^{k_1}$ and $\lambda^{k_2}$, respectively.
Let us distinguish two cases. 
Assume first that $k_1=k_2=:k$, so that $a$ and $ b$ are $4\lambda^{k}$ separated. Hence, we have
    \[
 \rho(x'_j,x'_{j+1}) \ge 
   \rho(a,b) -\rho(a,x'_j) -\rho(b,x'_{j+1}) 
   \ge  4 \lambda^{k} - \lambda^{k}- \lambda^{k}
    \ge
    2\lambda^{k} 
    \ge 2\lambda^n.
  \]
  Suppose now $k_1\neq k_2$, say that $k_1< k_2$, the other case is similar.
Recall that $b$ was found outside $B(x'_j,4\lambda^{k_2})$ in the construction of the shortcuts.
Hence, we have
    \[
 \rho(x'_j,x'_{j+1}) \ge 
    \rho(b,x'_j) -\rho(b,x'_{j+1}) 
   \ge  4 \lambda^{k_2} - \lambda^{k_2} 
    \ge
    3\lambda^{k_2} 
    \ge 3\lambda^n.
  \]
  In either case we have
    \[
   c(\x) \ge  c(\x') \ge \rho(x'_j,x'_{j+1}) \ge 
      2\lambda^n,
  \]
which is    in contradiction with \eqref{eq:q1q1small}.
\end{proof}

Next lemma will be used for the proof of the Ahlfors $Q$-regularity in the next section.
\begin{lemma}\label{lma:inclusions}
 For all $x \in X$ and $r > 0$ there exist $y_1,y_2 \in X$ such that
 \begin{equation}\label{eq:inclusions}
  B_\rho(x,r) \subseteq B_d(x,r)  \subseteq B_\rho\left(\{y_1,y_2\},\left(2 + 8/(\lambda -\lambda^2)\right)r\right).
 \end{equation}
\end{lemma}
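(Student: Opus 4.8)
\emph{Proof plan.} The first inclusion is free: since $d(x,y)\le c(x,y)\le\rho(x,y)$ we have $d\le\rho$, hence $B_\rho(x,r)\subseteq B_d(x,r)$. For the second inclusion I will use the two preparatory results already available: every point of $B_d(x,r)$ is reached by an itinerary in $\I^*$ of cost $<r$ with at most $4$ shortcuts of each level (Lemma~\ref{lma:multiplicity}), and such a ball meets at most one shortcut pair of level below a prescribed threshold (Lemma~\ref{lma:largegaps}). I record the bookkeeping once: a level-$k$ flight has $\rho$-length $<2\lambda^k$ (its endpoints lie in a common $\rho$-ball of radius $\lambda^k$), a walk step has $c$-cost equal to its $\rho$-length, and along an $\x\in\I^*$ of cost $<t$ with multiplicity $\le4$ the walk (odd) steps contribute $<t$ and the flight (even) steps of level $\ge n$ contribute $<\sum_{k\ge n}8\lambda^k=8\lambda^n/(1-\lambda)$ to $\sum_j\rho(x_{j-1},x_j)$. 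When $r\ge\lambda^2$ this already finishes: all shortcuts have level $\ge1$, so for $y\in B_d(x,r)$ and a suitable $\x\in\I^*$ with $\Ext(\x)=(x,y)$, $c(\x)<r$, one gets $\rho(x,y)<r+8\lambda/(1-\lambda)\le r+8r/(\lambda-\lambda^2)<(2+8/(\lambda-\lambda^2))r$ using $\lambda^2\le r$, so $y_1=y_2=x$ works.

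Assume now $r<\lambda^2$ and let $n\ge2$ be the largest integer with $r<\lambda^n$, so $\lambda^{n+1}\le r<\lambda^n$. Let $U$ be the set of points reachable from $x$ by an itinerary of cost $<r$ using only shortcuts of level $\ge n$; note $x\in U$. The central estimate is
\[
 \diam_\rho(U)\le\left(2+\tfrac{8}{\lambda-\lambda^2}\right)r.
\]
To see it, given $u,v\in U$ I concatenate the two witnessing itineraries (reversing one) into an itinerary from $u$ to $v$ of cost $<2r$ using only level-$\ge n$ shortcuts, make it alternating without increasing the cost, and apply Lemma~\ref{lma:multiplicity} to land in $\I^*$ with the same endpoints, cost $<2r$, multiplicity $\le4$, and still only level-$\ge n$ shortcuts; then $\rho(u,v)\le\sum_j\rho(x'_{j-1},x'_j)<2r+8\lambda^n/(1-\lambda)\le2r+8r/(\lambda-\lambda^2)$ using $\lambda^{n+1}\le r$. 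The same argument with $2r$ replaced by $r$ yields the one-sided version: if $z$ is reachable from $w$ by an itinerary of cost $<r$ using only level-$\ge n$ shortcuts, then $\rho(w,z)<r+8r/(\lambda-\lambda^2)$.

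Now I invoke Lemma~\ref{lma:largegaps} with this $n$ (legitimate since $r<\lambda^n$): there is at most one unordered pair $\{q_1,q_2\}$ with $(q_1,q_2)\in\S$, $L(q_1,q_2)<n$, and $\{q_1,q_2\}\cap B_d(x,r)\ne\emptyset$. Fix $y\in B_d(x,r)$ and $\x=(x_0,\dots,x_N)\in\I^*$ with $\Ext(\x)=(x,y)$, $c(\x)<r$, multiplicity $\le4$; every vertex $x_k$ lies in $B_d(x,r)$ because $(x_0,\dots,x_k)$ has cost $\le c(\x)<r$. Hence every level-$<n$ shortcut occurring in $\x$ has an endpoint in $B_d(x,r)$ and so, by uniqueness, equals $\{q_1,q_2\}$. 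If $\x$ uses none, it witnesses $y\in U$. If it uses some, let $(x_{b-1},x_b)$ be the last one: then $x_b\in\{q_1,q_2\}$, the tail $(x_b,\dots,x_N)$ uses only level-$\ge n$ shortcuts and has cost $<r$, so the one-sided estimate gives $\rho(x_b,y)<r+8r/(\lambda-\lambda^2)<(2+8/(\lambda-\lambda^2))r$. Thus, when the pair $\{q_1,q_2\}$ exists,
\[
 B_d(x,r)\subseteq U\cup B_\rho\!\left(\{q_1,q_2\},\left(2+\tfrac{8}{\lambda-\lambda^2}\right)r\right),
\]
and $B_d(x,r)\subseteq U$ otherwise. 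To conclude: if the pair does not exist, take $y_1=y_2=x$ and use $x\in U$ with the diameter estimate. If it exists, take $\{y_1,y_2\}=\{q_1,q_2\}$; by the diameter estimate it suffices to show $U\cap\{q_1,q_2\}\ne\emptyset$, which follows by running the same dichotomy on an itinerary from $x$ to some $y_0\in\{q_1,q_2\}$ with $d(x,y_0)<r$: either it has no level-$<n$ shortcut, giving $y_0\in U\cap\{q_1,q_2\}$, or its first level-$<n$ shortcut is $(x_{a-1},x_a)$ and the initial segment $(x_0,\dots,x_{a-1})$ witnesses $x_{a-1}\in U$ with $x_{a-1}\in\{q_1,q_2\}$.

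The step I expect to require care is the constant. A direct bound on $\rho(x,y)$ is hopeless because a single low-level shortcut can displace a point far beyond $r$; the real content is to isolate the unique ``long'' shortcut via Lemma~\ref{lma:largegaps} and then guarantee that the walk-budget $2r$ and the geometric flight-budget $8r/(\lambda-\lambda^2)$ are each charged only once — which is precisely what the diameter estimate for $U$ provides, and what forces $\{y_1,y_2\}$ to be the endpoints of that shortcut rather than a single point.
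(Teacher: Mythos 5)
Your proof is correct and it uses the same two tools as the paper (Lemma~\ref{lma:multiplicity} to reduce to decreasing--increasing itineraries with multiplicity $\le 4$, and Lemma~\ref{lma:largegaps} to isolate a unique low-level shortcut), with the same case split and the same constant. The organization of the small-$r$ case is genuinely different, and in one respect cleaner than the paper's. The paper picks $y_1\in\{y_1,y_2\}$ and works with an itinerary from $y_1$ to $z$ of cost $<2r$; it then asserts, citing \eqref{eq:onlyonebigjump}, that every even step has level $\ge n$. But \eqref{eq:onlyonebigjump} only controls shortcuts with an endpoint in $B_d(x,r)$, while the vertices of that itinerary lie a priori only in $B_d(y_1,2r)\subseteq B_d(x,3r)$, and for the given choice of $n$ (with $\lambda^{n+1}\le r<\lambda^n$) one cannot simply re-invoke Lemma~\ref{lma:largegaps} at radius $2r$ or $3r$ at the same level $n$. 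You avoid this entirely by anchoring every itinerary at $x$ with cost $<r$, so that every intermediate vertex provably lies in $B_d(x,r)$ and uniqueness applies directly. Your auxiliary set $U$ (points reachable from $x$ without any low-level shortcut), the $\rho$-diameter bound for $U$, and the observation that $U$ meets $\{q_1,q_2\}$ whenever the low-level pair exists, is precisely the bookkeeping needed to charge the walk-budget and the flight-budget once each, and it delivers the stated constant $2+8/(\lambda-\lambda^2)$. Your $r\ge\lambda^2$ versus $r<\lambda^2$ threshold, in place of the paper's $r\ge 1$ versus $r<1$, is immaterial. One thing worth spelling out, which you use implicitly, is that Lemma~\ref{lma:multiplicity} preserves the ``only levels $\ge n$'' property of an itinerary; this holds because each application of Lemma~\ref{lma:weak-id} only deletes shortcuts and never introduces new ones.
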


\begin{proof}
 The first inclusion $B_\rho(x,r) \subseteq B_d(x,r)$ follows from the fact that by construction $d \le \rho$.
 
 Let us show the second inclusion.  Suppose first that $r \ge 1$. Let $z \in B_d(x,r)$.
 By Lemma~\ref{lma:multiplicity} there exists $\x = (x_0,\dots,x_N) \in \I^*$ with $\Ext(\x)=(x,z)$, $c(\x) \leq r$, and $\# L_\x^{-1}(k) < 4$ for all $k \in \N$. Then
 \begin{eqnarray*}
  \rho(x,z) & \le & \sum_{j=1}^N\rho(x_{j-1},x_j) \le
   \sum_{j\text{ odd}}\rho(x_{j-1},x_j) + \sum_{j\text{ even}}\rho(x_{j-1},x_j)\\
   & \le &  c({\bf x}) + 8\sum_{k = 1}^\infty \lambda^k \leq r + 8\frac{\lambda}{1-\lambda} \le  \left(2 + 8\frac{\lambda^{-1}}{1-\lambda} \right)r,
 \end{eqnarray*}
 since $\lambda <1<r$, and hence \eqref{eq:inclusions} holds with $y_1=y_2=x$.
 
 Now suppose that $r < 1$ and let $n \in \N \cup\{0\}$ be such that
 \[
   \lambda^{n+1} \le r < \lambda^n.
 \]
 By Lemma~\ref{lma:largegaps} there exists at most one pair $\{y_1,y_2\}$ such that $(y_1,y_2) \in \S$ and
 \begin{equation}\label{eq:onlyonebigjump}
   L(y_1,y_2) < n \quad\text{and}\qquad B_d(x,r) \cap \{y_1,y_2\} \ne \emptyset.
 \end{equation}
 If such pair $\{y_1,y_2\}$ does not exist, we define $y_1 = y_2 = x$.
 Now 
 \[
  B_d(x,r) \subset B_d(\{y_1,y_2\},2r).
 \]
 Take $z \in B_d(x,r)$. By symmetry we may suppose $d(z,y_1) \le d(z,y_2)$.  By Lemma~\ref{lma:multiplicity}
 there exists 
 ${\bf x} = (x_0,\dots,x_N) \in \I^*$ with ${\rm Ext} ({\bf x})=(y_1,z)$, 
 $c({\bf x}) < 2r$, and $\# L_\x^{-1}(k) < 4$ for all $k \in \N$.
 By the assumption $d(z,y_1) \le d(z,y_2)$ we may assume that ${\bf x}$ does not contain
 the shortcuts $(y_1,y_2),(y_2,y_1)$.
 Then by \eqref{eq:onlyonebigjump} we have $L(x_{j-1},x_j) \ge n$ for all $j$ even. 
 Then, by the fact that ${\bf x} \in \I^*$, we have
 \begin{eqnarray*}
   \rho(y_1,z) & \le & \sum_{j=1}^N\rho(x_{j-1},x_j) \le
  \sum_{j\text{ odd}}\rho(x_{j-1},x_j) + \sum_{j\text{ even}}\rho(x_{j-1},x_j)\\
     & \le & c({\bf x}) + 8\sum_{k = n}^\infty \lambda^k = 2r + 8\frac{\lambda^n}{1-\lambda}
     \le  \left(2 + 8\frac{\lambda^{-1}}{1-\lambda} \right)r,
 \end{eqnarray*}
 since $ \lambda^{n+1} \le r$, and hence \eqref{eq:inclusions} holds.
\end{proof}

\subsection{Ahlfors $Q$-regularity of $(X,d)$}
We now give the proof of the Ahlfors $Q$-regularity of the space $(X,d)$, assuming that  
  $(X,\rho)$ is Ahlfors $Q$-regular.
  Namely, we have that there exists a constant $C < \infty$ such that
\[
 \frac{1}{C}r^Q \le \mathcal{H}^Q_\rho(B_\rho(x,r)) \le Cr^Q,
\]
for all $x \in X$ and $0 < r < \diam_\rho(X)$.

Hence by Lemma~\ref{lma:inclusions} we have
\[
 \frac{1}{C}r^Q  \le  \mathcal{H}^Q_\rho(B_d(x,r))  \le C2(2 + 8/(\lambda -\lambda^2))^Qr^Q,
\]
for all $x \in X$ and $0 < (2 + 8/(\lambda -\lambda^2))r < \diam_\rho(X)$. Thus $(X,d)$ is also Ahlfors $Q$-regular.

\subsection{No biLipschitz pieces}

Let $A\subseteq X$ be such that $\H^4_{\rho}(A)>0$. 
Our aim is to show that $d$ and $\rho$ are not biLipschitz equivalent on $A$.
For this purpose take a density-point $x$ of $A$. Then for any $\epsilon>0$ there exists $r_\epsilon > 0$
such that
\[
 B_\rho(x,r) \subset B_\rho(A,\epsilon r), \qquad \text{for all }r \in (0,r_\epsilon).
\]
Now, for all $n \in \N$ there exists $(q_{n,1},q_{n,2}) \in \S$ with $L(q_{n,1},q_{n,2}) = n$ such that
\[
 \{q_{n,1},q_{n,2}\} \subset B_\rho(x,2c_E\lambda^n).
\]
If $3c_E\lambda^n < r_\epsilon$, there exist $x_{n,1},x_{n,2} \in A$ such that 
\[
\rho(x_{n,1},q_{n,1}) \le 3c_E\epsilon\lambda^n
\qquad \text{and}\qquad
\rho(x_{n,2},q_{n,2}) \le 3c_E\epsilon\lambda^n.
\]
Then
\[
 \rho(x_{n,1},x_{n,2}) \ge \rho(q_{n,1},q_{n,2}) - \rho(x_{n,1},q_{n,1}) - \rho(x_{n,2},q_{n,2})
 \ge \lambda^{n+1} - 6c_E\epsilon\lambda^n
\]
and
\begin{eqnarray*}
 d(x_{n,1},x_{n,2}) & \le & d(q_{n,1},q_{n,2}) + d(x_{n,1},q_{n,1}) + d(x_{n,2},q_{n,2})\\
         & \le & \alpha_n \rho(q_{n,1},q_{n,2}) + \rho(x_{n,1},q_{n,1}) + \rho(x_{n,2},q_{n,2})\\
         & \le & 2\alpha_n\lambda^n + 6c_E\epsilon \lambda^n.
\end{eqnarray*}
Therefore we have
\[
 \frac{d(x_{n,1},x_{n,2})}{\rho(x_{n,1},x_{n,2})} \le \frac{2\alpha_n\lambda^n + 6c_E\epsilon \lambda^n}{\lambda^{n+1} - 6c_E\epsilon\lambda^n} = \frac{2\alpha_n + 6c_E\epsilon}{\lambda-6c_E\epsilon}.
\]
As $\alpha_n \to 0$, by letting $n$ be sufficiently large and $\epsilon$ be sufficiently small, we get that $\alpha_n + 6c_E\epsilon$ is sufficently small and so
the distances $d$ and $\rho$ are not biLipschitz equivalent on $A$.

\begin{remark}\label{rmk:zerocost}
 In the definition of the costs for the shortcuts we could also allow $\alpha_n = 0$.  This would give a semi-distance on $X$ that, upon factoring, gives a metric space $(Y,d)$ that satisfies all the previous properties.
\end{remark}

\section{Existence of shortcuts}\label{sec:existence}

We will now verify that the shortcuts necessary to employ Theorem \ref{thm:general}
can be made in the subRiemannian Heisenberg group and in any snowflaked Ahlfors regular metric space.

\subsection{Shortcuts in the Heisenberg group}

\begin{proof}[Proof of Theorem~\ref{thm:Heisenbergdistance}]
 We will verify that the assumptions of Theorem~\ref{thm:general} hold in the Heisenberg group with $\lambda = \frac12$.
 Let $p \in \mathbb H$ and $r > 0$. By left-translation invariance of the distance $d_b$
 in $\mathbb H$ we may assume that $p = (0,0,0)$.
 Take $q_1 = (0,0,0)$ and $q_2 = (0,0,r^2/4)$. Now let $p_1,p_2 \notin B(0,r)$.
 Since $d_b(q_1,q_2) = \sqrt{r^2/4} = r/2$, by the triangle inequality we have that $d_b(p_1,q_1) \ge r/2$
 and $d_b(p_2,q_2) \ge r/2$. Write $p_1 = (x_1,y_1,z_1)$ and $p_2 = (x_2,y_2,z_2)$.
 Then
the equation for the box distance is given by \eqref{eq:box:dist}.
 Trivially,  we have 
 \[
  |x_1-x_2| \le |x_1| + |x_2| \le d_b(p_1,q_1) + d_b(p_2,q_2)
 \]
 and
 \[
  |y_2-y_1| \le |y_1| + |y_2| \le d_b(p_1,q_1) + d_b(p_2,q_2).
 \]
 By using the triangle inequality and the estimate $r^2/4 \le d_b(q_1,p_1)d_b(q_2,p_2)$ we also get
\begin{eqnarray*}
   \left|z_1-z_2-\frac12(x_1y_2-x_2y_1) \right| & \le & |z_1|+|z_2-\frac14r^2|+\frac14r^2 +\frac12|x_1||y_2| + \frac12|x_2||y_1|\\
   & \le & d_b(p_1,q_1)^2+d_b(p_2,q_2)^2+d_b(q_1,p_1)d_b(q_2,p_2)\\
   & & +\frac12d_b(q_1,p_1)d_b(q_2,p_2)+\frac12d_b(q_1,p_1)d_b(q_2,p_2)\\
   & \le & d_b(p_1,q_1)^2+2d_b(q_1,p_1)d_b(q_2,p_2)+d_b(p_2,q_2)^2\\
   & = & \left(d_b(q_1,p_1) + d_b(q_2,p_2)\right)^2.
 \end{eqnarray*}
 Thus we have 
 \[
  d_b(p_1,p_2) \le d_b(q_1,p_1) + d_b(q_2,p_2)
 \]
 as required by the assumptions of Theorem~\ref{thm:general}.
\end{proof}

\subsection{Shortcuts in snowflaked Ahlfors regular metric spaces}

\begin{theorem}\label{thm:Ahlforssnowflake}
 Let $(X,d)$ be an Ahlfors $Q$-regular metric space with $Q > 0$ and let $\delta \in (0,1)$.
 Then the snowflaked metric space $(X,d^\delta)$ satisfies the assumptions of Theorem~\ref{thm:general}.
 Consequently, there exists a distance $d'$ on $X$ such that $d' \le d^\delta$, $(X,d')$ is Ahlfors $Q/\delta$-regular,
 and for any $A\subseteq X$ with $\H^{Q/\delta}_{d^\delta}(A)>0$, we have that 
 $d'$ and $d^\delta$ are not biLipschitz equivalent on $A$.
\end{theorem}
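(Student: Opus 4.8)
The plan is to verify the two hypotheses of Theorem~\ref{thm:general} for the space $(X,d^\delta)$ with regularity exponent $Q/\delta$ in place of $Q$, and then to quote that theorem directly. The first hypothesis is routine: if $\mu$ is a measure witnessing the Ahlfors $Q$-regularity of $(X,d)$, then, since $B_{d^\delta}(x,r)=B_d(x,r^{1/\delta})$ and $\diam_{d^\delta}(X)=(\diam_d X)^\delta$, we have $\mu(B_{d^\delta}(x,r))=\mu(B_d(x,r^{1/\delta}))\asymp (r^{1/\delta})^Q=r^{Q/\delta}$, so $\mu$ also witnesses the Ahlfors $Q/\delta$-regularity of $(X,d^\delta)$.

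For the shortcut hypothesis, fix $p\in X$ and $0<r<\diam_{d^\delta}(X)$, so that $r^{1/\delta}<\diam_d X$. Let $C$ be the Ahlfors regularity constant of $(X,d)$, put $\theta_0:=\frac12 C^{-2/Q}\in(0,1)$, and let $\epsilon=\epsilon(\delta)\in(0,1/2]$ be a constant to be fixed below. I will take $q_1:=p$ and choose $q_2$ with
\[
 \theta_0\epsilon\, r^{1/\delta}\le d(p,q_2)<\epsilon\, r^{1/\delta}.
\]
Such a point $q_2$ exists: this is the ``no holes'' consequence of Ahlfors regularity, since $\mu(B_d(p,\theta_0\epsilon r^{1/\delta}))\le C\theta_0^Q(\epsilon r^{1/\delta})^Q<\frac1C(\epsilon r^{1/\delta})^Q\le \mu(B_d(p,\epsilon r^{1/\delta}))$ by the choice of $\theta_0$, whence $B_d(p,\epsilon r^{1/\delta})\setminus B_d(p,\theta_0\epsilon r^{1/\delta})\ne\emptyset$. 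Then $q_1,q_2\in B_{d^\delta}(p,r)$ and $d^\delta(q_1,q_2)=d(p,q_2)^\delta\ge(\theta_0\epsilon)^\delta r$, so the separation requirement of Theorem~\ref{thm:general}(2) holds with $\lambda:=(\theta_0\epsilon)^\delta\in(0,1)$.

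It remains to verify \eqref{e:general-metric-assumption}, namely $d^\delta(p_1,p_2)\le d^\delta(p_1,q_1)+d^\delta(p_2,q_2)$ for all $p_1,p_2\notin B_{d^\delta}(p,r)$. Write $s_1:=d(p_1,p)$, $s_2:=d(p_2,q_2)$ and $e:=d(p,q_2)$. Then $s_1\ge r^{1/\delta}$; the triangle inequality gives $s_2\ge r^{1/\delta}-e\ge(1-\epsilon)r^{1/\delta}\ge \frac12 r^{1/\delta}$ and $d(p_1,p_2)\le s_1+e+s_2$; and $e<\epsilon r^{1/\delta}$. Since $\delta\in(0,1)$ the map $t\mapsto t^\delta$ is concave, so $a^\delta-b^\delta\le \delta b^{\delta-1}(a-b)$ for $a\ge b>0$; moreover $(s_1,s_2)\mapsto s_1^\delta+s_2^\delta-(s_1+s_2)^\delta$ has positive partial derivatives, hence on $\{s_1\ge r^{1/\delta},\,s_2\ge \frac12 r^{1/\delta}\}$ it is minimized at $(r^{1/\delta},\frac12 r^{1/\delta})$, where its value is $c_\delta r$ with $c_\delta:=1+2^{-\delta}-(3/2)^\delta$ (which is positive for $\delta\in(0,1)$, since it vanishes at $\delta=1$ and is strictly decreasing in $\delta$). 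Combining these estimates and using $s_1+s_2\ge\frac32 r^{1/\delta}$,
\[
 d^\delta(p_1,p_2)\le (s_1+s_2+e)^\delta\le (s_1+s_2)^\delta+\delta(s_1+s_2)^{\delta-1}e\le s_1^\delta+s_2^\delta-c_\delta r+\delta(3/2)^{\delta-1}\epsilon\, r.
\]
Choosing $\epsilon:=\min\{1/2,\ c_\delta/(\delta(3/2)^{\delta-1})\}$, which depends only on $\delta$, the last two terms sum to something $\le 0$, and we obtain $d^\delta(p_1,p_2)\le s_1^\delta+s_2^\delta=d^\delta(p_1,q_1)+d^\delta(p_2,q_2)$, as required. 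Both hypotheses of Theorem~\ref{thm:general} now hold for $(X,d^\delta)$ with exponent $Q/\delta$, so that theorem produces a distance $d'$ on $X$ with $d'\le d^\delta$, with $(X,d')$ Ahlfors $Q/\delta$-regular, and with $d'$ and $d^\delta$ not biLipschitz equivalent on any $A$ with $\H^{Q/\delta}_{d^\delta}(A)>0$.

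The step I expect to be the main obstacle is the verification of \eqref{e:general-metric-assumption}: the naive triangle-inequality bound through $q_1$ and $q_2$ leaves an error term of size $d^\delta(q_1,q_2)$, and one has to absorb it using the quantitative gap in the subadditivity of $t\mapsto t^\delta$ at scale $r^{1/\delta}$ — a gap that is available precisely because $p_1,p_2$ lie outside $B_{d^\delta}(p,r)$ — which in turn forces $d(q_1,q_2)$, and hence $\lambda$, to be a small constant depending on $\delta$. Everything else (the Ahlfors computation, the existence of $q_2$) is bookkeeping once the correct $\epsilon=\epsilon(\delta)$ is identified.
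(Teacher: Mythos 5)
Your proof is correct. At the conceptual level it follows the same route as the paper's: take $q_1 = p$ and pick $q_2$ in an annulus about $p$ (non-empty by Ahlfors regularity), then exploit the concavity of $t \mapsto t^\delta$ to verify \eqref{e:general-metric-assumption}. Where you and the paper diverge is in the quantitative organization. The paper places $q_2$ at the relatively large scale $d(p,q_2) \sim (1-\delta)r^{1/\delta}$ (with $\lambda = (2C)^{-2\delta/Q}(1-\delta)^\delta$), observes that both $d(q_1,p_1)$ and $d(q_2,p_2)$ are then at least $\delta r^{1/\delta}$ so that $d(q_1,q_2) \le (\tfrac{1}{\delta}-1)\min(d(q_1,p_1),d(q_2,p_2))$, and absorbs $d(q_1,q_2)$ in a single concavity step after a WLOG reduction to the case $d(p_1,q_1) \le d(p_2,q_2)$. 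You instead place $q_2$ at a small scale $d(p,q_2)\le \epsilon(\delta)\,r^{1/\delta}$ and apply concavity twice: once to peel off the contribution of $e = d(q_1,q_2)$, and once (via monotonicity of $(s_1,s_2) \mapsto s_1^\delta+s_2^\delta - (s_1+s_2)^\delta$) to produce the explicit subadditivity gap $c_\delta r$ that absorbs it, with $\epsilon(\delta)$ then chosen to make the error term no larger than $c_\delta r$. The paper's version is shorter and avoids introducing the constant $c_\delta$; yours is more mechanical and makes the quantitative slack more visible, at the cost of an extra constant to track. Both are complete and correct, and both give a $\lambda$ depending on $\delta$ and the Ahlfors constant of $(X,d)$, as they must.
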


\begin{proof}
First of all, it is trivial that $(X,d^\delta)$ is Ahlfors $Q/\delta$-regular.
Let us then check the assumption (2) of Theorem~\ref{thm:general}. 
Since $(X,d)$ is $Q$-regular, there exists $C > 1$ such that
\begin{equation}\label{eq:ahlfors}
 \frac{1}{C}r^Q \le \mathcal{H}^Q(B_d(x,r)) \le Cr^Q
\end{equation}
for all $x \in X$ and $0 < r < \diam(X)$.
We shall set $\lambda:= (2C)^{-2\delta/Q}(1-\delta)^\delta$.
Take $p \in X$ and $0 < r < \diam(X)$. Define $q_1 = p$ and take
\[
 q_2 \in B_d(p,(1-\delta)r^\frac{1}{\delta}) \setminus B_d(p,
 (2C)^{-2/Q}(1-\delta)
 r^\frac{1}{\delta}).
\]
Such $q_2$ exists since the annulus from where the point is taken has positive measure by \eqref{eq:ahlfors}
and is hence non-empty. In particular, $q_1, q_2\in B_{d^\delta} (p,r)$ and
\[
 d(q_1,q_2)^\delta \ge \left((2C)^{-2/Q}(1-\delta)\right)^\delta r = \lambda r.
\]
Now, take $p_1,p_2 \notin B_d\left(p,r^\frac{1}{\delta}\right)$.  We get that
$$ d(q_1,p_1),d(q_2,p_2)  \geq \delta r^{1/\delta},$$
and so
$$d(q_1,q_2) \leq \left( 1 - \delta\right) r^{1/\delta} \leq \left( \frac{1}{\delta} - 1 \right) \min(d(q_1,p_1),d(q_2,p_2)).$$
First assume that $d(p_1,q_1) \le d(p_2,q_2)$.
Then we get
\begin{eqnarray*}
 d(p_1,p_2)^\delta & \le & (d(p_1,q_1)+d(q_1,q_2)+d(p_2,q_2))^\delta \\
 & \le & ((1+(\frac1\delta-1))d(p_1,q_1)+d(p_2,q_2))^\delta\\
 & \le & d(p_2,q_2)^\delta + \delta(1+(\frac1\delta-1))d(p_1,q_1)d(p_2,q_2)^{1-\delta} \\
 & \le & d(p_1,q_1)^\delta + d(p_2,q_2)^\delta
\end{eqnarray*}
verifying \eqref{e:general-metric-assumption}.  In the penultimate inequality, we used the fact that $x \mapsto x^\delta$ is concave so that the higher order terms of the Taylor expansion is always negative.  An analogous calculation takes care of the case $d(p_2,q_2) \leq d(p_1,q_1)$.
\end{proof}

\section{A BPI space using self-similar shortcuts in the Heisenberg group}\label{sec:BPI}

 In this section we prove Theorem~\ref{thm:Hnotminimal}. 
 The idea is to consider a regular subset $K\subset \mathbb H$, to specify in a self-similar way the shortcuts
 taken in the construction of Section~\ref{sec:breakingbad}
 and to make all the shortcuts to have zero cost. This way the similitude mappings used in the selection of shortcuts will
 almost be similitude
 mappings also for the new distance $d$. This will allow us to show that $(K,d)$ is BPI.
 Then the facts that $(\mathbb H,d_b)$ looks down on $(K,d)$ and that
 $(K,d)$ does not look down on $(\mathbb H,d_b)$ follow, after some work, via Theorem~\ref{thm:general}.


 \subsection{Defining a self-similar tiling}

 Define the similitude mappings as
 \[
   S_{i,j,k}(p) = \left(\frac{i}{2},\frac{j}{2},\frac{k}{4}\right) \cdot \delta_\frac12(p), \qquad i,j \in \{0,1\}, k \in \{0,1,2,3\}.
 \]
 Relabel the similitudes  by $\{S_i\,:\,i = 1,\dots,16\} = \{S_{i,j,k}\}$ and 
 denote by $K$ the attractor of $\{S_{i,j,k}\}$, i.e., the nonempty compact set  (see \cite{Hutchinson} for details)
 satisfying
 \begin{equation}
   K = \bigcup_{i=1}^{16}S_i(K). \label{e:K-self-similar}
 \end{equation}
 Let us show that $K$ has nonempty interior. 
 First of all, the horizontal projection of the iterated function system has the unit square as the attractor.
 Secondly, since the dilation and the group operation commute, we may consider separately the horizontal and vertical components
 of the iterated function system. This way we see that
 \begin{equation}\label{eq:Kformula}
  K = \{(x,y,z+t) \,:\, (x,y,z) \in \widetilde K, t \in [0,1]\},
 \end{equation}
 where $\widetilde K$ is the attractor of the horizontal component that is realized as the attractor of the system
 \[
  \{S_{i,j,0}\,:\, i,j \in \{0,1\}\}.
 \]
 The set $\widetilde K$ has the form
 \begin{equation}\label{eq:phidef}
   \widetilde K = \overline{\{(x,y,\varphi(x,y))\,:\,(x,y)\in [0,1]^2\}}
 \end{equation}
 with some Borel function $\varphi \colon [0,1]^2 \to \R$. Observe that $\varphi$ is bounded since $K$ is compact.
 Also, since $0$ is the fixed point of $S_{0,0,0}$ and $S_{i,j,0}(\widetilde K)$ do not contain $0$ if $i\ne 0$ or $j \ne 0$, 
 the function $\varphi$ is continuous at $0$.
 Therefore by \eqref{eq:Kformula} the attractor $K$ contains a small ball near $0$ and thus $K$ has nonempty interior.
 Because of the nonempty interior and the self-similar structure $(K,d_b)$ is Ahlfors $4$-regular.
 

 \subsection{Constructing the shortcuts}
For a multi-index  
$\mathtt{i}=
 (i_1,...,i_k)  \in \{1,\dots,16\}^k$, we shall use the standard notation
$S_{\mathtt{i}}$ for the composition \[ 
S_{\mathtt{i}}:=  S_{i_1} \circ S_{i_2} \circ \cdots \circ S_{i_k}.\]
With $k=0$ we interpret $\{1,\dots,16\}^k$ to consist of only one element, call it $\emptyset$, and $S_\emptyset$ is then understood
to be the identity map.
 We define the shortcuts at level $n$ as 
 \[
   \S_n := \{(S_{\mathtt{i}}(\frac12,\frac12,0),S_{\mathtt{i}}(\frac12,\frac12,\frac1{64}))\,:\, \mathtt{i} \in \{1,\dots,16\}^{n-3}\}.
 \]
 We also set $L(x,y) = n$ for $(x,y) \in \S_n$.  Note that levels start from $n = 3$.
 We then define the total set of shortcuts as
 \[
   \S := \bigcup_{n=3}^\infty \S_n.
 \]
 Define the cost as $c(x,y) = 0$ for all $(x,y) \in \S$.
 Let us check that the construction of Section~\ref{sec:breakingbad} works with this choice of shortcuts and costs.
 This will be established by the following three lemmas for 
 $\lambda = 1/2$ and $c_E \geq 4$
 some sufficiently large number.


 In the following lemmas we will use the map
  \[\pi : ({\mathbb H},d_b) \to \R^2\]
  that is  the projection onto the $xy$-plane 
   and is a $1$-Lipschitz homomorphism, when we endow $\R^2$ with the $\ell_\infty$-distance. 
 The first lemma shows that the points near which we find the level $n$ shortcuts can be found outside a $4\lambda^n$-neighborhood shortcut points of lower levels.

 \begin{lemma}
   For all $n \geq 3$, we have
   \[
    \{S_{\mathtt{i}}(\frac12,\frac12,0) \,:\, \mathtt{i} \in \{1,\dots,16\}^{n-3}\} \cap \left( \bigcup_{m < n} \bigcup_{(x,y) \in \S_m} B_{d_b}(\{x,y\},2^{-n+2}) \right) = \emptyset.
   \]
 \end{lemma}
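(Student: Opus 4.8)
The plan is to prove the statement by contradiction, leveraging the self-similarity of the tiling and the explicit geometry of the similitudes $S_{i,j,k}$. Suppose, for some $n \geq 3$, there exist a multi-index $\mathtt{i} \in \{1,\dots,16\}^{n-3}$ with $p := S_{\mathtt{i}}(\tfrac12,\tfrac12,0)$, a level $m < n$, a pair $(x,y) \in \S_m$, and a point $w \in \{x,y\}$ with $d_b(p,w) < 2^{-n+2}$. By construction $m \geq 3$ as well, and $w = S_{\mathtt{j}}(\tfrac12,\tfrac12,t)$ for some $\mathtt{j} \in \{1,\dots,16\}^{m-3}$ and $t \in \{0,\tfrac1{64}\}$. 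The key observation is that each $S_i$ contracts the box distance by exactly $\lambda = \tfrac12$ (since $\delta_{1/2}$ scales $d_b$ by $\tfrac12$ and left translations are isometries), so $S_{\mathtt{j}}$ is a similitude with ratio $2^{-(m-3)}$. Writing $\mathtt{j}' \in \{1,\dots,16\}^{n-3}$ for the extension of $\mathtt{j}$ that agrees with $\mathtt{i}$ after position $m-3$ (if $\mathtt{j}$ is not a prefix of $\mathtt{i}$, a separate and easier estimate applies — see below), I would factor both $p$ and $w$ through $S_{\mathtt{j}}$.

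\textbf{Main case: $\mathtt{j}$ is a prefix of $\mathtt{i}$.} Write $\mathtt{i} = \mathtt{j}\mathtt{k}$ with $\mathtt{k} \in \{1,\dots,16\}^{n-m}$. Then $p = S_{\mathtt{j}}(S_{\mathtt{k}}(\tfrac12,\tfrac12,0))$ and $w = S_{\mathtt{j}}(\tfrac12,\tfrac12,t)$, so
\[
 d_b(p,w) = 2^{-(m-3)} \, d_b\!\left( S_{\mathtt{k}}(\tfrac12,\tfrac12,0),\ (\tfrac12,\tfrac12,t) \right).
\]
Now $S_{\mathtt{k}}(\tfrac12,\tfrac12,0)$ lies in the image $S_{k_1}(K)$ of the first tile, whereas the anchor point $(\tfrac12,\tfrac12,t)$ is the center point of the \emph{whole} configuration at one scale coarser. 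The horizontal projection is the cleanest handle: $\pi(S_{\mathtt{k}}(\tfrac12,\tfrac12,0))$ lies in a dyadic subsquare of $[0,1]^2$ of sidelength $2^{-(n-m)} \leq \tfrac12$, determined by $k_1 = (i,j)$, namely the square $[\tfrac i2,\tfrac i2+\tfrac12]\times[\tfrac j2,\tfrac j2+\tfrac12]$ refined further; while $\pi(\tfrac12,\tfrac12,t) = (\tfrac12,\tfrac12)$ sits at the common corner of the four subsquares. A short computation shows $|\pi(S_{\mathtt{k}}(\tfrac12,\tfrac12,0)) - (\tfrac12,\tfrac12)|_\infty$ is either $0$ (forcing us to track the vertical coordinate) or bounded below — and since $\pi$ is $1$-Lipschitz and $\sqrt{|z|} \le d_b$, the box distance $d_b(S_{\mathtt{k}}(\tfrac12,\tfrac12,0),(\tfrac12,\tfrac12,t))$ is bounded below by a constant depending only on how $k_1$ and the vertical digit interact. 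Unwinding, one gets $d_b(p,w) \geq c_0 \cdot 2^{-(m-3)} \geq c_0 \cdot 2^{-(n-3)} = 8 c_0 \cdot 2^{-n}$; choosing the normalization correctly (the anchors at $(\tfrac12,\tfrac12,\tfrac1{64})$ vs.\ $0$ and the digit structure give a clean constant, and $2^{-n+2} = 4\cdot 2^{-n}$) yields $d_b(p,w) \geq 4 \cdot 2^{-n}$, contradicting the assumption. The honest arithmetic here is to check the worst case — when $\mathtt{k}$ starts with the digit $S_{0,0,0}$ and hence $\pi$ gives nothing — and verify that then the \emph{vertical} separation (coming from the $z$-shift $\tfrac k4$ in $S_{i,j,k}$, together with the fact that $S_{\mathtt k}(\tfrac12,\tfrac12,0)$ has bounded height while the anchor's height $t \in \{0,\tfrac1{64}\}$ differs) still produces the needed lower bound after taking square roots.

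\textbf{Secondary case: $\mathtt{j}$ is not a prefix of $\mathtt{i}$.} Let $\ell$ be the first position where they differ, so $\mathtt{i} = \mathtt{a}\,i\,\cdots$ and $\mathtt{j} = \mathtt{a}\,j'\,\cdots$ with $|\mathtt{a}| = \ell - 1 \le m - 4$ and $i \neq j'$. Factoring out $S_{\mathtt{a}}$ (ratio $2^{-(\ell-1)}$) reduces to estimating the $d_b$-distance between a point in $S_i(K)$ and a point in $S_{j'}(K)$ with $i \neq j'$; since distinct tiles $S_i(K)$ are disjoint (their horizontal projections are distinct closed dyadic squares meeting only along boundaries, and where the squares coincide — which cannot happen since $i\ne j'$ forces different $(i,j)$ \emph{or} different $k$, and different $k$ separates the $z$-ranges of the tiles, as $\widetilde K$ has bounded height and the shifts $\tfrac k4$ are $\tfrac14$-separated while $K = \widetilde K + [0,1]$ in $z$ has height exactly $1 \le$ gap... ) one gets a uniform gap, giving $d_b(p,w) \geq c_1 2^{-(\ell-1)} \geq c_1 2^{-(m-3)}$, and again we win provided $c_E$, equivalently the implied constants, are chosen large enough — which is exactly the phrase "some sufficiently large number" in the lemma's hypothesis.

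\textbf{The main obstacle.} The delicate point is the interface in the main case where $\pi$ degenerates, i.e.\ when the refining multi-index $\mathtt{k}$ begins with the fixed-point digit $S_{0,0,0}$ repeatedly, so the horizontal projections of $p$ and $w$ coincide and all separation must come from the vertical ($z$) coordinate under the square-root metric. One must verify that the vertical gap between $\{S_{\mathtt k}(\tfrac12,\tfrac12,0)\}$ (which, since $K=\widetilde K+[0,1]\ni$ points of height up to a bounded constant) and the anchor value $t$ is genuinely bounded below independently of how deep the $S_{0,0,0}$-run goes — this uses that the anchors of the shortcuts, $(\tfrac12,\tfrac12,0)$ and $(\tfrac12,\tfrac12,\tfrac1{64})$, are chosen with a definite $z$-offset $\tfrac1{64}$ that is carried along by the group law, and that $\varphi$ (equivalently the height function of $\widetilde K$) is controlled. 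Everything else is bookkeeping with ratios $2^{-k}$ and the triangle inequality for $d_b$.
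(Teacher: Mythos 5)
The paper's proof is a one-line parity argument that your proposal misses, and the bound you claim in the main case is actually false. The paper simply observes that the $xy$-projection $\pi$ sends the level-$n$ shortcut anchors $\{S_{\mathtt{i}}(\tfrac12,\tfrac12,0):\mathtt{i}\in\{1,\dots,16\}^{n-3}\}$ onto $A_{n-3}$, the centers of the dyadic subsquares of $[0,1]^2$ of side $2^{-(n-3)}$, and sends the lower-level anchors into $\bigcup_{k<n-3}A_k$. The coordinates of a level-$(n-3)$ center are odd multiples of $2^{-(n-3)-1}=2^{-n+2}$, while the coordinates of a level-$k$ center with $k<n-3$ are even multiples of $2^{-n+2}$; so each coordinate differs by a nonzero integer multiple of $2^{-n+2}$ and the $\ell_\infty$ distance is at least $2^{-n+2}$. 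Since $\pi$ is $1$-Lipschitz from $(\mathbb H,d_b)$ to $(\R^2,\|\cdot\|_\infty)$, the $d_b$-distance is at least as large, and the open ball of radius $2^{-n+2}$ cannot reach it. No factoring through prefixes, no case split, no vertical coordinate.

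Two concrete errors in your proposal. First, the degenerate case you flag ($\pi(S_{\mathtt{k}}(\tfrac12,\tfrac12,0))=(\tfrac12,\tfrac12)$, ``forcing us to track the vertical coordinate'') cannot occur: for nonempty $\mathtt{k}$ of length $\ell$, that projection is $\bigl((2a+1)/2^{\ell+1},(2b+1)/2^{\ell+1}\bigr)$ with $a,b\in\Z$, and $(\tfrac12,\tfrac12)=(2^\ell/2^{\ell+1},2^\ell/2^{\ell+1})$; odd minus even is odd, so the $\ell_\infty$ distance is at least $2^{-\ell-1}>0$. Second, and more seriously, your main estimate $d_b(p,w)\geq c_0\cdot 2^{-(m-3)}$ with a universal constant $c_0$ is not true: the separation at the parent level is not bounded below by a constant but shrinks to $2^{-(n-m)-1}$ as $\mathtt{k}$ pushes toward a corner (e.g.\ $\mathtt{k}=(0,0,\cdot)$ followed by $(1,1,\cdot)^{\ell-1}$). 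Multiplying the correct local bound $2^{-(n-m)-1}$ by the contraction ratio $2^{-(m-3)}$ gives precisely $2^{-n+2}$ -- tight, not improvable by a constant. Your inequality chain $c_0\,2^{-(m-3)}\geq c_0\,2^{-(n-3)}$ relies on the (false) uniform $c_0$; with the correct scale-dependent factor that chain collapses to equality and the ``choosing the normalization correctly'' step has nothing to spare. The same issue infects the secondary case: distinct tiles $S_i(K)$ do touch along boundaries, so there is no ``uniform gap'' between tiles, only between the specific center points, and that gap is again the parity bound above, not a constant times $2^{-(\ell-1)}$.
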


 \begin{proof}
   Let $A_k \subset [0,1]^2$ be the centers of the dyadic subcubes of level $k$.  Note that for each $k$ we have that
   \[
     \pi(\{S_{\mathtt{i}}(\frac12,\frac12,t) \,:\, \mathtt{i} \in \{1,\dots,16\}^k, t \in \R\}) = A_k.
   \]
   As $\pi$ is 1-Lipschitz, it suffices to prove that
   \[
     A_n \cap B_{\R^2_\infty} \left( \bigcup_{m < n} A_m, 2^{-n-1} \right) = \emptyset.
   \]
   But this follows from the geometry of $(\R^2,\|\cdot\|_\infty)$.  Note that we need the sets $B_{\R^2_\infty}$ to be open, which is fine.
 \end{proof}

 The next lemma says that the points where we find the level $n$ shortcuts themselves are $4\lambda^n$-separated.
 
 \begin{lemma}
  For all $n \geq 3$ the set
  \[
A:=   \{S_{\mathtt{i}}(\frac12,\frac12,0) \,:\, \mathtt{i} \in \{1,\dots,16\}^{n-3}\}
  \]
  is $2^{-n+2}$-separated (note: $2^{-n+2} = 4 \cdot 2^{-n}$, which is needed for the construction).
 \end{lemma}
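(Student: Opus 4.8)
The plan is to reduce the claim, just as in the previous lemma, to a statement about the centers of dyadic subcubes in $(\R^2, \|\cdot\|_\infty)$ via the projection $\pi$, and then to handle the ``vertical'' coordinate separately since $\pi$ forgets it. First I would recall that by \eqref{e:K-self-similar} and the commuting of dilations with the group law, the image $\pi(A)$ equals the set $A_{n-3}$ of centers of the dyadic subsquares of $[0,1]^2$ of generation $n-3$; in particular any two distinct points of $\pi(A)$ are at $\ell_\infty$-distance at least $2^{-(n-3)} = 8\cdot 2^{-n}$ from one another (consecutive dyadic centers at generation $k$ are $2^{-k}$ apart). Since $\pi$ is $1$-Lipschitz with respect to $d_b$, for two elements $S_{\mathtt i}(\tfrac12,\tfrac12,0)$ and $S_{\mathtt j}(\tfrac12,\tfrac12,0)$ of $A$ with $\mathtt i \ne \mathtt j$ and $\pi$-images distinct, we immediately get $d_b$-distance at least $8\cdot 2^{-n} \ge 2^{-n+2}$, which is more than enough.

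The one remaining case is when $\mathtt i \ne \mathtt j$ but $\pi(S_{\mathtt i}(\tfrac12,\tfrac12,0)) = \pi(S_{\mathtt j}(\tfrac12,\tfrac12,0))$, i.e.\ the two multi-indices induce the same dyadic subsquare of $[0,1]^2$ but differ in the vertical labels $k$. Here I would compute the vertical coordinates explicitly. Because the vertical part of the iterated function system acts by $z \mapsto \tfrac{k}{4} + \tfrac14 z$ (the $\delta_{1/2}$ scaling on the center being $\tfrac14$), one sees that the $z$-coordinate of $S_{\mathtt i}(\tfrac12,\tfrac12,0)$, once the horizontal subsquare is fixed, ranges over a set of the form $\{ \text{(fixed horizontal-induced shift)} + \sum_{\ell=1}^{n-3} \tfrac{k_\ell}{4^\ell}\, : \, k_\ell \in \{0,1,2,3\}\}$, i.e.\ over base-$4$ expansions with $n-3$ digits scaled by $\tfrac14$. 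Two distinct such expansions differ by at least $4^{-(n-3)} = 4^3 \cdot 4^{-n}$ in absolute value after the outermost $\tfrac14$ factor is accounted for; in any case the vertical separation is bounded below by some absolute constant times $4^{-n}$. Since the box distance between two points differing only in the vertical coordinate by $\Delta z$ equals $\sqrt{|\Delta z|}$, this gives a $d_b$-separation of order $2^{-n}$. The point to check carefully is that the constant is at least $2^{-n+2}$, i.e.\ that $\sqrt{|\Delta z|} \ge 4\cdot 2^{-n}$, equivalently $|\Delta z| \ge 16\cdot 4^{-n}$; this is exactly why the vertical offset in the shortcut definition was chosen to be $\tfrac{1}{64}$ and the digits scaled so as to leave room --- but if the bare computation gives only $|\Delta z| \ge 4^{-(n-3)} = 64\cdot 4^{-n} \ge 16\cdot 4^{-n}$, we are fine.

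I expect the main (and only) obstacle to be bookkeeping in this second case: correctly tracking how the vertical coordinate accumulates under the composition $S_{\mathtt i} = S_{i_1}\circ\cdots\circ S_{i_{n-3}}$, since each outer map both rescales the previously-accumulated vertical value by $\tfrac14$ and also picks up a contribution from the horizontal coordinates through the group law $x_p y_q - y_p x_q$ term. The clean way to organize this, which I would adopt, is to invoke the splitting already recorded in \eqref{eq:Kformula}: the vertical component of the system is genuinely decoupled as an affine IFS on $\R$ with maps $z \mapsto \tfrac{k}{4}+\tfrac14 z$ once one works relative to a fixed horizontal address, so the accumulated vertical coordinate is literally a truncated base-$4$ number and the separation estimate is the elementary statement that distinct length-$(n-3)$ base-$4$ strings name reals that are $\ge 4^{-(n-3)}$ apart. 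Combining the two cases yields the desired $2^{-n+2}$-separation of $A$, completing the proof.
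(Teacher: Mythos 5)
Your proof is correct and follows essentially the same route as the paper: split on whether the two points have equal horizontal projection, use the $1$-Lipschitz projection $\pi$ and dyadic-center separation in the first case, and in the second case use the decoupled vertical IFS (via \eqref{eq:Kformula}) to get a base-$4$ separation of $4^{-(n-3)}$ in the $z$-coordinate and hence $\sqrt{4^{-(n-3)}}=2^{-n+3}>2^{-n+2}$ in $d_b$. (Your aside tying the constant to the $\tfrac{1}{64}$ in the shortcut definition is a red herring — that offset concerns the shortcut endpoints, not the separation of the centers $S_{\mathtt i}(\tfrac12,\tfrac12,0)$ — but you immediately abandon it in favor of the correct computation, so it does no harm.)
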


 \begin{proof}
   As shown in the previous lemma, the image of 
$A$
   under $\pi$ is precisely the centers of the dyadic subcubes of $[0,1]$ of level $n-3$.  Let $x,y \in A$ and suppose $\pi(x) \neq \pi(y)$.  Then
   \[
     d_b(x,y) \geq \|\pi(x) - \pi(y)\| \geq 2^{-n+3}.
   \]
   Now suppose $\pi(x) = \pi(y)$ but $x \neq y$.  Then as the vertical component of the iterated function system can be viewed independently, we see that the $z$-coordinate of $x$ and $y$ are points in the center of the level $n-3$ 4-dic subintervals of $[\varphi(\pi(x)),\varphi(\pi(x))+1]$,
   where $\varphi$ is the function in \eqref{eq:phidef}. 
    Thus, they differ by no less than $4^{-n+3}$ and so
   \[
     d_b(x,y) \geq \sqrt{4^{-n+3}} = 2^{-n+3}>2^{-n+2} .
   \]
 \end{proof}

 Finally, we show that the level $n$ shortcut points form a $c_E\lambda^n$-covering of $K$ for sufficiently large $c_E$.  This finishes all the properties needed to construct the shortcuts.
 
 \begin{lemma} \label{lma:K-c1}
  There exists some absolute constant $c_E > 0$ so that
  \begin{equation}\label{e:K-ball-ind}
   K \subseteq \bigcup_{\mathtt{i} \in \{1,\dots,16\}^{n-3}} B_{d_b}(S_{\mathtt{i}}(\frac12,\frac12,0),c_E 2^{-n}), \qquad \forall n \geq 3.
  \end{equation}
 \end{lemma}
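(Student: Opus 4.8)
The plan is to use that $K$ is the attractor of the similitudes $\{S_i\}$ together with the fact that each $S_i$ scales $d_b$ by exactly the factor $\frac12$. First I would record this scaling: writing $S_{i,j,k}(p) = v \cdot \delta_{\frac12}(p)$ with $v = (\frac i2,\frac j2,\frac k4)$, the left-invariance of $d_b$ and the homogeneity \eqref{eq:homog:dist} give
\[
d_b\bigl(S_{i,j,k}(p),S_{i,j,k}(q)\bigr) = d_b\bigl(\delta_{\frac12}(p),\delta_{\frac12}(q)\bigr) = \tfrac12 d_b(p,q).
\]
Composing $n-3$ such maps then yields $d_b(S_{\mathtt{i}}(p),S_{\mathtt{i}}(q)) = 2^{-(n-3)} d_b(p,q)$ for every multi-index $\mathtt{i} \in \{1,\dots,16\}^{n-3}$.

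Next I would iterate the self-similarity relation \eqref{e:K-self-similar} exactly $n-3$ times to obtain $K = \bigcup_{\mathtt{i} \in \{1,\dots,16\}^{n-3}} S_{\mathtt{i}}(K)$. Then, setting $p_0 := (\frac12,\frac12,0)$ and $R_0 := \sup_{q \in K} d_b(q,p_0)$ — a finite number, since $K$ is compact, that is determined by the fixed iterated function system alone and hence is an absolute constant — every $q \in K$ can be written $q = S_{\mathtt{i}}(q')$ with $\mathtt{i} \in \{1,\dots,16\}^{n-3}$ and $q' \in K$, so the scaling from the first step gives
\[
d_b\bigl(q,S_{\mathtt{i}}(p_0)\bigr) = 2^{-(n-3)} d_b(q',p_0) \le 2^{-(n-3)} R_0 = 8R_0\,2^{-n}.
\]
Since $S_{\mathtt{i}}(p_0)$ is exactly the center $S_{\mathtt{i}}(\frac12,\frac12,0)$ appearing in the statement, this shows that \eqref{e:K-ball-ind} holds as soon as $c_E$ is chosen at least $\max\{4,\,8R_0 + 1\}$ (the $+1$ only to be safe about the balls being open, and $c_E \ge 4$ is the constraint carried over from the construction of Section~\ref{sec:breakingbad}).

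The argument is short and I do not expect a genuine obstacle. The two points needing a little care are: verifying that the contraction factor of each $S_i$ is \emph{exactly} $\frac12$, not merely bounded — this is precisely where the homogeneity of $d_b$ under $\delta_\lambda$ enters — and observing that $R_0 < \infty$ by compactness of $K$ and, crucially, that $R_0$ does not depend on $n$, so that $c_E$ can be taken uniform in $n$ as required.
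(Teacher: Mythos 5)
Your proof is correct and rests on the same two facts as the paper's: that each $S_i$ contracts $d_b$ by exactly the factor $\tfrac12$ (via left-invariance and $\delta_\lambda$-homogeneity of $d_b$), and that $K$ is bounded. The paper packages this as an induction on $n$, contracting a single covering ball at each step, whereas you unroll that induction into a direct estimate $d_b(q, S_{\mathtt{i}}(p_0)) \le 2^{-(n-3)} R_0$; these are the same argument in different form.
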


 \begin{proof}
  We prove the claim by induction.
  As $K$ is bounded, we easily get \eqref{e:K-ball-ind} for $n=3$ by choosing some $c_E$ large enough. 
  Now assume that \eqref{e:K-ball-ind} holds for some $n \ge 3$.
  Then by the self-similarity of $K$ as exhibited in \eqref{e:K-self-similar} we get
  \begin{eqnarray*}
    K&  = & \bigcup_{i=1}^{16} S_i(K) \subseteq \bigcup_{i=1}^{16} S_i(\bigcup_{\mathtt{i} \in \{1,\dots,16\}^{n-3}} B_{d_b}(S_{\mathtt{i}}(\frac12,\frac12,0),c_E 2^{-n}))\\
    & = &\bigcup_{\mathtt{i} \in \{1,\dots,16\}^{n-2}} B_{d_b}(S_{\mathtt{i}}(\frac12,\frac12,0),c_E 2^{-n-1})
  \end{eqnarray*}
  Thus \eqref{e:K-ball-ind} holds for $n+1$.
 \end{proof}
 
 By Remark~\ref{rmk:zerocost} taking zero costs for shortcuts is allowed.
 From the proof of Theorem~\ref{thm:Heisenbergdistance} we see that $\lambda = \frac12$ works in the Heisenberg group. 
 
 Now the conclusions of Theorem~\ref{thm:general} hold for the constructed distance $d$.
 That is, the identity map $\id \colon (K,d_b) \to (K,d)$ is Lipschitz, 
 but not biLipschitz on any set of positive measure, and the space $(K,d)$ is Ahlfors regular.
 In particular, $(\mathbb H,d_b)$ looks down on $(K,d)$. In order to show that $(\mathbb H,d_b)$ is not minimal in 
 looking down, we still need to prove that $(K,d)$ is a BPI space and that
 $(K,d)$ does not look down on $(\mathbb H,d_b)$.

 \subsection{$(K,d)$ is a BPI space}
  Note that Lemma~\ref{lma:multiplicity} holds in $(K,d)$ as its proof, as well as the proof of Lemma~\ref{lma:weak-id}, only require that $\mathcal{N}_{n+1} \cap B_{d_b}(\mathcal{N}_n,4 \lambda^n) = \emptyset$, which holds for 
  the construction of $(K,d)$.  We begin with the following lemma.
 
  \begin{lemma}\label{lma:small-walks}
    Let $\x \in \I^*$ and $n = \min \{k \in \N : L_\x^{-1}(k) \neq \emptyset\}$.  There exists $\x' \in \I^*$ such that $\Ext(\x) = \Ext(\x')$, $c(\x') \leq c(\x)$, and for any $j \in \N$ such that $\min (L(x_{2j-1},x_{2j}),L(x_{2j+1},x_{2j+2})) = m < n$, then
    \[
      d_b(x_{2j},x_{2j+1}) \leq 2 \lambda^m.
    \]
  \end{lemma}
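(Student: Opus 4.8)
The plan is to reroute $\x$ so that each short walk step ends up sandwiched between two shortcuts whose defining subcubes of the self-similar tiling $K=\bigcup_{i=1}^{16}S_i(K)$ are nested, one inside the other; the bound $d_b(x_{2j},x_{2j+1})\le 2\lambda^m$ then follows from the explicit form of the maps $S_\mathtt{i}$. First I would record the key geometric fact: if $(q_1,q_2)\in\S_m$ is the shortcut indexed by $\mathtt{i}\in\{1,\dots,16\}^{m-3}$ and $(p_1,p_2)\in\S_\ell$ is the one indexed by $\mathtt{j}\in\{1,\dots,16\}^{\ell-3}$, with $m\le\ell$ and $\mathtt{i}$ a prefix of $\mathtt{j}$, then $d_b(q_a,p_b)\le 2\lambda^m$ for all $a,b$. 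This is proved by writing $\mathtt{j}=\mathtt{i}\mathtt{k}$, using that $S_\mathtt{i}$ scales $d_b$ by the factor $\lambda^{m-3}=2^{-(m-3)}$, so that one only has to estimate $d_b\big((\tfrac12,\tfrac12,t),S_\mathtt{k}(\tfrac12,\tfrac12,t')\big)$ for $t,t'\in\{0,\tfrac1{64}\}$, using that $\pi(S_\mathtt{k}(\tfrac12,\tfrac12,t'))$ is the center of a proper dyadic subcube of $[0,1]^2$ and that the vertical component of $S_\mathtt{k}$ contracts by $4^{-|\mathtt{k}|}$; in the degenerate case $\mathtt{k}=\emptyset$ the two shortcuts coincide and the distance is $\lambda^m$.

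Next I would carry out the rerouting. By Lemma~\ref{lma:multiplicity} (which holds in $(K,d)$, as already observed) I may assume $\#L_\x^{-1}(k)\le4$ for all $k$, so $L_\x$ reaches its minimal value $n$ on a single block of at most four consecutive positions and is nondecreasing away from that block in both directions; hence the subcubes carrying consecutive shortcuts shrink as one moves outward from the block. I then sweep outward: at each stage I take the innermost not-yet-treated consecutive pair $(j,j+1)$, say with $L_\x(j)\le L_\x(j+1)$ and with non-nested carrying subcubes, and replace the shortcut at position $j+1$ by the shortcut of the same level lying in the appropriate descendant of the subcube carrying shortcut $j$ --- such a shortcut exists because each level has a representative in every subcube of the corresponding depth. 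By the geometric fact the inward walk $(x_{2j},x_{2j+1})$ now lies inside that subcube, so $d_b(x_{2j},x_{2j+1})\le 2\lambda^{L_\x(j)}$; the outward walk is repaired at the next stage. After each move I reapply Lemma~\ref{lma:weak-id} and Lemma~\ref{lma:multiplicity} to return to $\I^*$ with multiplicity $\le4$ without disturbing the first and last shortcuts, and the sweep terminates since the number of untreated non-nested pairs toward the ends strictly decreases.

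The step I expect to be the main obstacle is the cost inequality $c(\x')\le c(\x)$: relocating a shortcut shortens the walk on its inward side but can a priori lengthen the walk on its outward side, so the relocations must be organized so that the total cost (shortcuts being free) does not grow. The mechanism I would rely on is that a non-nested pair forces the \emph{old} inward walk to join two distinct dyadic subcubes of depth $L_\x(j)-3$; since $\pi$ is $1$-Lipschitz and the centers of distinct such cubes are $2^{-(L_\x(j)-3)}$ apart, that old walk had $d_b$-length comfortably larger than the new one, and one balances this surplus against the displacement it causes on the outward side, telescoping along the sweep. Once the sweep is finished, every walk step of $\x'$ that lies strictly between two shortcuts whose levels have minimum $m$ is contained in a single subcube of depth $m-3$, which yields $d_b(x_{2j},x_{2j+1})\le 2\lambda^m$, as claimed.
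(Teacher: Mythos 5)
Your approach is genuinely different from the paper's, and I don't believe it can be made to work in the form you sketch.

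The paper's proof does not relocate shortcuts at all; it \emph{deletes} them. The key observation is this: suppose $L_\x$ is decreasing at $j,j{+}1,j{+}2$ (the increasing side is symmetric), so $m=L(x_{2j+1},x_{2j+2})$. Let $x\in\mathcal N_m$ be the center of the ball $B_{d_b}(x,\lambda^m)$ in which the level-$m$ shortcut $(x_{2j+1},x_{2j+2})$ was found. Since $L(x_{2j+3},x_{2j+4})\le m$, separation of the nets forces $x_{2j+3}\notin B_{d_b}(x,\lambda^m)$. If the walk is long, i.e.\ $d_b(x_{2j},x_{2j+1})>2\lambda^m$, then the triangle inequality gives $x_{2j}\notin B_{d_b}(x,\lambda^m)$ as well. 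But then \eqref{e:general-metric-assumption} with $q_1=x_{2j+1}$, $q_2=x_{2j+2}$, $p_1=x_{2j}$, $p_2=x_{2j+3}$ says exactly that replacing $(x_{2j},x_{2j+1},x_{2j+2},x_{2j+3})$ by the single walk $(x_{2j},x_{2j+3})$ cannot increase the cost. One iterates; each step strictly shortens the itinerary, so the process stops, and at the end every walk adjacent to a shortcut whose level is the smaller of its two neighbors has length at most $2\lambda^m$. No knowledge of the tiling structure of $K$ is used beyond the net separation that was already needed for Lemmas~\ref{lma:weak-id} and~\ref{lma:multiplicity}.

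Your relocation scheme has several gaps that I think are fatal as written. First, the \emph{geometric fact} is false with the constant $2\lambda^m$: if $\mathtt i$ is a prefix of $\mathtt j$, then $d_b(q_a,p_b)=2^{-(m-3)}\,d_b\bigl((\tfrac12,\tfrac12,t_a),S_{\mathtt k}(\tfrac12,\tfrac12,t_b)\bigr)\le 2^{-(m-3)}\diam(K)=8\lambda^m\diam(K)$, and $\diam(K)\ge 1$ (e.g.\ $K$ contains $0$ and the fixed point of $S_{1,1,0}$), so the best generic bound is $\gtrsim 8\lambda^m$, not $2\lambda^m$. Second, even if a weaker constant sufficed, the cost inequality $c(\x')\le c(\x)$ is exactly where the argument must not hand-wave: relocating a shortcut changes \emph{both} adjacent walks, and your ``surplus telescopes along the sweep'' is a heuristic without an estimate; there is no mechanism in the construction guaranteeing that the outward displacement is absorbed, and the problem compounds along the sweep. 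Third, termination is unclear: a relocation that makes $(j,j{+}1)$ nested can un-nest $(j{+}1,j{+}2)$, so ``the number of untreated non-nested pairs strictly decreases'' needs a careful invariant that isn't supplied. Beyond these, the whole strategy is tied to the explicit self-similar tiling of $K$, whereas the lemma (like Lemmas~\ref{lma:weak-id} and~\ref{lma:multiplicity}) is really a consequence of the abstract shortcut properties; the deletion argument is both simpler and more portable.
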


  \begin{proof}
    We may suppose without loss of generality that there exist $k \in L^{-1}(n)$ such that $k \geq 2j+1$, that is, $L_\x$ is still decreasing from $2j-1$ to $2j+1$.  Thus, $L(x_{2j+1},x_{2j+2}) = m$.  
    
    Let $x \in \mathcal{N}_m$ be the point for which the shortcut $(x_{2j+1},x_{2j+2})$ is found in $B_{d_b}(x,\lambda^m)$.  Then as $L(x_{2j+3},x_{2j+4}) \leq m$, we get that $x_{2j+3} \notin B(x, \lambda^m)$.  If $d_b(x_{2j},x_{2j+1}) > 2\lambda^m$, then we get that $x_{2j} \notin B(x,\lambda^m)$ by the triangle inequality.  Thus, applying \eqref{e:general-metric-assumption} with $q_1 = x_{2j+1}$, $q_2 = x_{2j+2}$, $p_1 = x_{2j}$, and $p_2 = x_{2j+3}$, we get that we can replace $(x_{2j},x_{2j+1},x_{2j+2},x_{2j+3})$ in $\x$ with $(x_{2j},x_{2j+3})$ to get a itinerary in $\I^*$ with lower cost and two fewer points with the same extremal points.
    
    We then iterate this procedure until we cannot to get our needed itinerary $\x'$.
  \end{proof}
 
 The following lemma says that one can connect $x,y \in S_{\mathtt{i}}(K)$ by an itinerary that does not go too far out.

In this section we write $|\mathtt{i}|=k$ if $\mathtt{i} \in \{1,\dots,16\}^k$.

 \begin{lemma}
   There exists some $C > 0$ so that 
 for all multi-indices ${\mathtt{i}}$,
    for all $x,y \in S_{\mathtt{i}}(K)$ and all $\epsilon > 0$, there exists an itinerary $\x = (x_0,\dots,x_n) \in \I^*$ such that $c(\x) \leq (1+\epsilon) d(x,y)$, $\Ext(\x) = (x,y)$, and $x_0,\dots,x_n\in B_{d_b}(S_{\mathtt{i}}(K),C 2^{-|\mathtt{i}|})$.
 \end{lemma}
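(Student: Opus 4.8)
The plan is to take any near-optimal itinerary realizing $d(x,y)$, normalize it via Lemma~\ref{lma:multiplicity}, and then show it cannot stray far from $S_{\mathtt{i}}(K)$ because it cannot profitably use a shortcut whose level is much smaller than $|\mathtt{i}|+3$. Write $k=|\mathtt{i}|$ and $D_0=\diam_{d_b}(K)<\infty$; since replacing $\epsilon$ by $\min\{\epsilon,1\}$ only strengthens the conclusion we may assume $\epsilon\le 1$, and as $x,y\in S_{\mathtt{i}}(K)$ we have $d(x,y)\le d_b(x,y)\le D_0 2^{-k}$. Take a near-optimal itinerary from $x$ to $y$, insert stationary walks to make it alternating, and apply Lemma~\ref{lma:multiplicity} to get $\x=(x_0,\dots,x_m)\in\I^*$ with $\Ext(\x)=(x,y)$, $c(\x)\le(1+\epsilon)d(x,y)\le 2D_0 2^{-k}$, and $\#L_\x^{-1}(\ell)\le 4$ for all $\ell$; if $\x$ uses no shortcut it is the pair $(x,y)$ and we are done, so set $n_0:=\min L_\x$. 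A direct box-norm computation shows that a level-$\ell$ shortcut $(S_{\mathtt g}(\tfrac12,\tfrac12,0),S_{\mathtt g}(\tfrac12,\tfrac12,\tfrac1{64}))$, $|\mathtt g|=\ell-3$, has $d_b$-length exactly $2^{-\ell}$. Splitting the steps of $\x$ into walks (odd steps, whose $d_b$-lengths sum to $c(\x)$) and flights (even steps), and using the multiplicity bound and a geometric series,
\[
  d_b(x_0,x_j)\ \le\ c(\x)+4\sum_{\ell\ge n_0}2^{-\ell}\ =\ c(\x)+8\cdot 2^{-n_0}\ \le\ 2D_0 2^{-k}+8\cdot 2^{-n_0}
\]
for every $j$. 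Hence it suffices to prove $n_0\ge k-c_0$ for an absolute constant $c_0$: then every $x_j$ lies within $(2D_0+8\cdot 2^{c_0})2^{-k}$ of $x_0\in S_{\mathtt{i}}(K)$, and the lemma follows with $C:=2D_0+8\cdot 2^{c_0}$ (enlarged if necessary so that the case $k=0$, where $n_0\ge 3$ forces $8\cdot 2^{-n_0}\le 1$, is also covered).

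For the lower bound on $n_0$ I would use two geometric facts. First, the points $(\tfrac12,\tfrac12,0)$ and $(\tfrac12,\tfrac12,\tfrac1{64})$ lie in the interior of $K$: this can be read off from \eqref{eq:Kformula} together with a short recursion on the vertical fibres of $\widetilde K$ over the corners of the unit square (one checks, e.g., that the fibre of $\widetilde K$ over $(\tfrac12,\tfrac12)$ contains both $0$ and a negative value). Consequently there is an absolute $c_1>0$ with $B_{d_b}((\tfrac12,\tfrac12,0),c_1)\cup B_{d_b}((\tfrac12,\tfrac12,\tfrac1{64}),c_1)\subseteq\Int(K)$, and applying $S_{\mathtt g}$ the two endpoints of any level-$\ell$ shortcut lie at $d_b$-distance $\ge c_1 2^{-\ell+3}$ from $\mathbb H\setminus\Int(S_{\mathtt g}(K))$. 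Second, by \eqref{e:K-self-similar} the cells $\{S_{\mathtt g}(K)\}$ form a tiling: if neither of two multi-indices is a prefix of the other, the corresponding cells have disjoint interiors, and in fact $S_{\mathtt g}(K)\cap\Int(S_{\mathtt g'}(K))=\emptyset$.

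Now suppose $n_0<k$; the level-$n_0$ shortcut sits in a cell $S_{\mathtt j}(K)$ with $|\mathtt j|=n_0-3<k$. After removing any redundant ``there-and-back'' repetitions (in the spirit of Lemma~\ref{lma:weak-id}), and using the decreasing–increasing structure of $L_\x$ together with Lemma~\ref{lma:small-walks} — the key point being that on the decreasing branch consecutive shortcut cells are incomparable, so each walk joining two consecutive flights has $d_b$-length $\ge 8c_1$ times the length of the later flight — one controls the flight displacement accumulated before $\x$ first reaches the level-$n_0$ shortcut. If $\mathtt j$ is incomparable with $\mathtt{i}$, then an endpoint of the level-$n_0$ shortcut, being well inside the big cell $S_{\mathtt j}(K)$, lies at $d_b$-distance $\ge c_1 2^{-n_0+3}$ from $S_{\mathtt{i}}(K)\ni x$; combined with the cost budget $c(\x)\le 2D_0 2^{-k}$ this forces $2^{-n_0}\lesssim 2^{-k}$. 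If instead $\mathtt j$ is a proper prefix of $\mathtt{i}$ (the ``ancestor'' case), one shows the shortcut is useless: traversing it displaces the itinerary by $2^{-n_0}$, and since every later shortcut has level $\ge n_0$ and (being in cells incomparable with $S_{\mathtt j}(K)$, except for redundant repetitions) can only be reached by walks of $d_b$-length $\gtrsim 2^{-n_0}$, returning to $y\in S_{\mathtt{i}}(K)$ would cost $\gtrsim 2^{-n_0}$ — again impossible unless $2^{-n_0}\lesssim 2^{-k}$. In every case $n_0\ge k-c_0$, which finishes the proof.

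I expect the main obstacle to be precisely this last step: keeping the bookkeeping of walks, flights and the decreasing–increasing structure under control simultaneously, and in particular handling the ancestor case cleanly. The auxiliary geometric inputs — the interiority of $(\tfrac12,\tfrac12,0)$ and $(\tfrac12,\tfrac12,\tfrac1{64})$ in $K$, and the ``honest tiling'' statement $S_{\mathtt g}(K)\cap\Int(S_{\mathtt g'}(K))=\emptyset$ — are believable but need to be pinned down explicitly from the self-similar structure, and it is through them that the absolute constants $c_1$ and $c_0$ (hence $C$) are determined.
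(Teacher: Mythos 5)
Your reduction is structurally the same as the paper's: normalize via Lemma~\ref{lma:multiplicity}, observe that the total $d_b$-displacement of the itinerary is at most $c(\x)+8\cdot 2^{-n_0}$ where $n_0=\min L_\x$, and reduce everything to the lower bound $n_0\geq k-c_0$. The paper states this as property (6), $L_\x^{-1}([0,k-M])=\emptyset$, and proves it together with the auxiliary walk bounds (4)--(5) by a direct cost-budget computation that never leaves the metric/combinatorial setup: the only geometric input is the $4\lambda^n$-separation of $\mathcal{N}_n$ and the exclusion of $\mathcal{N}_n$ from small neighborhoods of lower-level shortcut endpoints, already packaged in Lemma~\ref{lma:small-walks}. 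Your route to the same bound through the explicit tiling geometry of $K$ (interiority of the shortcut endpoints and disjointness of interiors of incomparable cells) is genuinely different in mechanism, but as written it has gaps that are not just bookkeeping.

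The concrete problem is the assertion that ``on the decreasing branch consecutive shortcut cells are incomparable.'' This is not forced by the construction and can fail: if the level-$\ell_1$ shortcut ($\ell_1>n_0$) lives in a cell $S_{\mathtt{j}'}(K)$ with $\mathtt{j}$ a prefix of $\mathtt{j}'$, then that whole cell sits inside $S_{\mathtt{j}}(K)$, and nothing in the separation conditions (which only control distances to shortcut \emph{centers}, at scale $2^{-\ell_1+2}$, far smaller than $2^{-n_0}$) rules this out. Your lower bound on the walk joining the two flights therefore does not follow in the nested case, and with it the estimate $c(\x)\geq 8c_1 2^{-n_0}$ that you use to conclude $2^{-n_0}\lesssim 2^{-k}$. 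A related issue appears in your first attempt at the incomparable case, where you compare $c_1 2^{-n_0+3}$ directly against $c(\x)+4\cdot 2^{-n_0}$: since $c_1<1/2$ necessarily (the projection of $K$ is $[0,1]^2$), that inequality gives no information. Finally, the ancestor case ($\mathtt{j}$ a prefix of $\mathtt{i}$) is dealt with only by an appeal to ``redundant repetitions,'' which is not enough — one would need to argue, as the paper does, that the single remaining low-level flight of $d_b$-length $\geq 2^{M-k-1}$ dominates everything else, so that $d_b(x,y)$ exceeds $\diam S_{\mathtt{i}}(K)\leq c_E2^{-k-2}$. Also note that you invoke Lemma~\ref{lma:small-walks} for a lower bound on walks, but that lemma yields an \emph{upper} bound; it is used in the paper precisely to make the tails in the ancestor-type computation small. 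The geometric facts you flag as ``to be pinned down'' (interiority of $(\frac12,\frac12,0)$ and $(\frac12,\frac12,\frac1{64})$, and $S_{\mathtt{g}}(K)\cap\Int S_{\mathtt{g}'}(K)=\emptyset$ for incomparable $\mathtt{g},\mathtt{g}'$) are plausible but indeed need a real proof, and even granting them the incomparability/ancestor gaps would remain. The paper sidesteps all of this by working exclusively with the constructed separations and the cost budget.
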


 \begin{proof}
   We claim that there exists some constant $M \in \N$ depending only on $c_E > 0$ of Lemma~\ref{lma:K-c1} such that if $\epsilon > 0$, $\mathtt{i} \in \{1,\dots,16\}^k$ for $k \geq M$, and $x,y \in S_{\mathtt{i}}(K)$, then there exists some itinerary $\x \in \I^*$ such that
   \begin{enumerate}
    \item $c(\x) \leq (1+\epsilon) d(x,y)$,
    \item $\Ext(\x) = (x,y)$,
    \item $\# L_{\x}^{-1}(k) \leq 4, \qquad \forall k \in \N$,
    \item $d_b(x_{j-1},x_j) \leq 2^{1-m}$ for all $j$ odd such that
      \[
        m = \min(L(x_{j-2},x_{j-1}),L(x_j,x_{j+1})) < \min \{k : L_\x^{-1}(k) \neq \emptyset\},
      \]
    \item $d_b(x_{j-1},x_j) \leq 2^{M/2-k}$ for all $j$ odd,
    \item $L_{\x}^{-1}([0,k-M]) = \emptyset$.
   \end{enumerate}
   If the claim holds, then we get that for all $x,y \in S_{\mathtt{i}}(K)$ with  $\mathtt{i} \in \{1,\dots,16\}^k$ for $k \geq M$, there exists an itinerary $\x = (x_0,\dots,x_n) \in \I^*$ such that $c(\x) \leq (1+\epsilon) d(x,y)$, $\Ext(\x) = (x,y)$, and 
   \[
     \sum_{i=0}^{n-1} d_b(x_i,x_{i+1}) \leq C 2^{-|\mathtt{i}|}
   \]
   for some $C$ depending on $M$.  The lemma now follows for all $\mathtt{i} \in \{1,\dots,16\}^k$ with $k \geq M$ from the triangle inequality
   and for all $\mathtt{i} \in \{1,\dots,16\}^k$ with $k < M$ by the fact that there are only finitely many such $\mathtt{i}$.

   Let us prove the claim.  Let $c_E > 0$ be the constant from Lemma~\ref{lma:K-c1}.
   Let $M \in \N$ be the minimal even number such that
   \begin{equation}
    2^{M/2} > 2c_E \label{e:M/2-defn-1}
   \end{equation}
   and
   \begin{equation}
     2^{M-1} - 2^{M/2+1} - 2^{M/2+7} > c_E 2^{-2}. \label{e:M/2-defn-2}
   \end{equation}
   As $x,y \in S_{\mathtt{i}}(K)$,   if $|\mathtt{i}|=k>M$, we get that
   \begin{equation}
     d(x,y) \leq d_b(x,y) \leq c_E2^{-k}. \label{e:K-c1-defn}
   \end{equation}

   By an application of Lemma~\ref{lma:multiplicity} on some itinerary with cost no more than $(1+\epsilon)d(x,y)$, we get an itinerary $\x = (x_0,\dots,x_N)$ that satisfies the first three properties.  We then apply Lemma~\ref{lma:small-walks} on $\x$ (and still calling the result $\x$) to get that the fourth property is satisfied.

   Suppose $d_b(x_{j-1},x_j) > 2^{M/2-k}$ for some odd $j$.  Then
   \[
     c(\x) \geq d_b(x_{j-1},x_j) \geq 2^{M/2-k} \overset{\eqref{e:K-c1-defn} \wedge \eqref{e:M/2-defn-1}}{>} 2 d(x,y),
   \]
   a contradiction.  Thus, the fifth condition is satisfied.
   
   Now suppose $L_\x^{-1}([0,k-M]) \neq \emptyset$.  Suppose first that $\# L_{\x}^{-1}([0,k-M/2]) \geq 2$.  Then as $\x \in \I^*$, there exists some $(x_{2j},x_{2j+1}) \notin \S$ such that
   \[
     c(\x) \geq c(x_{2j},x_{2j+1}) \geq 4 \cdot 2^{M/2-k} \overset{\eqref{e:K-c1-defn} \wedge \eqref{e:M/2-defn-1}}{>} 2 d(x,y),
   \]
   which is a contradiction.

   Now suppose that $\# L_{\x}^{-1}([0,k-M/2]) = 1$.  Let $L(x_{2j-1},x_{2j}) \geq k-M$.  Then we have by the triangle inequality
   \begin{multline*}
     d_b(x,y) \geq d_b(x_{2j-1},x_{2j}) - \sum_{\ell=0}^{2j-3} d_b(x_\ell,x_{\ell+1}) - d_b(x_{2j-2},x_{2j-1}) \\
     - d_b(x_{2j},x_{2j+1}) - \sum_{\ell=2j+1}^\infty d_b(x_\ell,x_{\ell+1}) = (*).
   \end{multline*}
   We have that
   \begin{eqnarray}
     \sum_{\ell=0}^{2j-3} d_b(x_\ell,x_{\ell+1}) &\leq& 16 \sum_{s=k-M/2}^\infty 2^{-s} = 2^{M/2-k + 6} \notag \\
     \sum_{\ell=2j+1}^{\infty} d_b(x_\ell,x_{\ell+1}) &\leq& 16 \sum_{s=k-M/2}^\infty 2^{-s} = 2^{M/2-k + 6}. \label{e:small-tails}
   \end{eqnarray}
   Together with the fourth property, we get that
   \[
     (*) \overset{\eqref{e:small-tails}}{\geq} 2^{M-k-1} - 2^{M/2-k+1} - 2^{M/2-k+7} \overset{\eqref{e:M/2-defn-2}}{>} c_E 2^{-2-k}.
   \]
   But this is a contradiction because $x,y \in S_{\mathtt{i}}(K)$ and so $d_b(x,y) \leq c_E 2^{-2-k}$.
 \end{proof}

 We can now prove the following lemma that says that there exists large subset of every $S_{\mathtt{i}}(K)$ that can be connected optimally by itineraries only in $S_{\mathtt{i}}(K)$.

 \begin{lemma} \label{lma:j-defn}
   There exists some multi-index $\mathtt{j}$ such that the following property holds.  For any $\epsilon > 0$, $k \in \N$, $\mathtt{i} \in \{1,\dots,16\}^k$, and any two $x,y \in S_{\mathtt{i}}(S_{\mathtt{j}}(K))$, there exists an itinerary $\x = (x_0,\dots,x_N) \in \I^*$ with $c(\x) \leq (1+\epsilon) d(x,y)$, $\Ext(\x) = (x,y)$, and $x_0,\dots,x_N \in S_{\mathtt{i}}(K)$.
 \end{lemma}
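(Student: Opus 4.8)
The plan is to derive this lemma from the previous one --- which produces, for any multi-index $\mathtt{i}$ and any $x,y\in S_{\mathtt{i}}(K)$, a near-optimal itinerary in $\I^*$ staying inside the collar $B_{d_b}(S_{\mathtt{i}}(K),C2^{-|\mathtt{i}|})$ --- by fixing $\mathtt{j}$ so deep that, after any rescaling $S_{\mathtt{i}}$, that collar around $S_{\mathtt{i}\mathtt{j}}(K)$ is absorbed into $S_{\mathtt{i}}(K)$. The only input about the maps I need is their contraction behaviour: since $d_b$ is left-invariant and $\delta_{1/2}$-homogeneous, each $S_\ell$ contracts $d_b$ by the factor $\frac12$, hence $S_{\mathtt{i}}$ contracts $d_b$ by $2^{-|\mathtt{i}|}$, so that $S_{\mathtt{i}}(B_{d_b}(A,r))=B_{d_b}(S_{\mathtt{i}}(A),2^{-|\mathtt{i}|}r)$ for all $A\subseteq\mathbb H$ and $r>0$.

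To choose $\mathtt{j}$, let $C$ be the constant of the previous lemma, and use that $K$ has nonempty interior to fix $u$ in the interior of $K$ and $\rho_0>0$ with $B_{d_b}(u,\rho_0)\subseteq K$. For each $n$, writing $K=\bigcup_{|\mathtt{i}|=n}S_{\mathtt{i}}(K)$, pick a length-$n$ multi-index $\mathtt{i}^{(n)}$ with $u\in S_{\mathtt{i}^{(n)}}(K)$; since $\diam_{d_b}(S_{\mathtt{i}^{(n)}}(K))=2^{-n}\diam_{d_b}(K)$ we get
\[
  B_{d_b}\big(S_{\mathtt{i}^{(n)}}(K),C2^{-n}\big)\subseteq B_{d_b}\big(u,2^{-n}(\diam_{d_b}(K)+C)\big).
\]
Taking $n$ so large that $2^{-n}(\diam_{d_b}(K)+C)<\rho_0$ and setting $\mathtt{j}:=\mathtt{i}^{(n)}$, we obtain $B_{d_b}(S_{\mathtt{j}}(K),C2^{-|\mathtt{j}|})\subseteq K$.

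With $\mathtt{j}$ fixed, the rest is immediate: given $\epsilon>0$, $k\in\N$, $\mathtt{i}\in\{1,\dots,16\}^k$ and $x,y\in S_{\mathtt{i}}(S_{\mathtt{j}}(K))=S_{\mathtt{i}\mathtt{j}}(K)$, apply the previous lemma with the multi-index $\mathtt{i}\mathtt{j}$ to get $\x=(x_0,\dots,x_N)\in\I^*$ with $\Ext(\x)=(x,y)$, $c(\x)\le(1+\epsilon)d(x,y)$ and $x_0,\dots,x_N\in B_{d_b}(S_{\mathtt{i}\mathtt{j}}(K),C2^{-(k+|\mathtt{j}|)})$. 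By the contraction identity and the choice of $\mathtt{j}$,
\[
  B_{d_b}\big(S_{\mathtt{i}\mathtt{j}}(K),C2^{-(k+|\mathtt{j}|)}\big)=S_{\mathtt{i}}\big(B_{d_b}(S_{\mathtt{j}}(K),C2^{-|\mathtt{j}|})\big)\subseteq S_{\mathtt{i}}(K),
\]
so $x_0,\dots,x_N\in S_{\mathtt{i}}(K)$, which is exactly the assertion. The one genuinely substantive point is the choice of $\mathtt{j}$ --- locating a cylinder set that, together with a $C2^{-|\mathtt{j}|}$ collar, still lies inside $K$ --- and this is precisely where nonemptiness of the interior of $K$ enters, and the reason the source must be shrunk from $S_{\mathtt{i}}(K)$ to $S_{\mathtt{i}}(S_{\mathtt{j}}(K))$; everything else is bookkeeping with contraction ratios.
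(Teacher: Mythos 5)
Your proof is correct and follows essentially the same route as the paper: fix $\mathtt{j}$ deep enough that the $C2^{-|\mathtt{j}|}$-collar of $S_{\mathtt{j}}(K)$ sits inside $K$ (using the nonempty interior), then apply the previous lemma to $\mathtt{i}\mathtt{j}$ and transport the collar by the similitude $S_{\mathtt{i}}$. The only difference is that you spell out the selection of $\mathtt{j}$ via a nested sequence of cylinder sets around an interior point, where the paper simply appeals to compactness; this is a harmless expansion of the same idea.
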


 \begin{proof}
   Let $C > 0$ be the constant from the previous lemma.  As $K$ has nonempty interior we may choose $x \in \mathrm{int}(K)$ and $h > 0$ so that $B_{d_b}(x,h) \subset K$.  As $K$ is compact, there then exists some $\mathtt{j}$ so that
   \[
    B_{d_b}(S_{\mathtt{j}}(K),C 2^{-|\mathtt{j}|}) \subseteq B_{d_b}(x,h) \subseteq K.
   \]

   Now let $x,y \in S_{\mathtt{i}}(S_{\mathtt{j}}(K))$ for some arbitrary $\mathtt{i} \in \{1,\dots,16\}^k$.  Then there exists an itinerary $\x = (x_0,\dots,x_n) \in \I^*$ such that $c(\x) \leq (1+\epsilon) d(x,y)$, $\Ext(\x) = (x,y)$, and each of the points of $\x$ is contained
   \[
     B_{d_b}(S_{\mathtt{i}}(S_{\mathtt{j}}(K)),C 2^{-|\mathtt{i}|-|\mathtt{j}|}) = S_{\mathtt{i}}(B_{d_b}(S_{\mathtt{j}}(K),C 2^{-|\mathtt{j}|})) \subseteq S_{\mathtt{i}}(K).
   \]
 \end{proof}

 \begin{lemma}
   Let $\mathtt{j}$ be from Lemma~\ref{lma:j-defn}.  Then for all $k \in \N$ and $\mathtt{i} \in \{1,\dots,16\}^k$, we have that
   \begin{equation}
     d(S_{\mathtt{i}}(x),S_{\mathtt{i}}(y)) = 2^{-|\mathtt{i}|} d(x,y), \qquad \forall x,y \in S_{\mathtt{j}}(K). \label{e:j-invariance}
   \end{equation}
 \end{lemma}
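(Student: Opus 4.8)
The plan is to prove the two inequalities $d(S_{\mathtt{i}}(x),S_{\mathtt{i}}(y))\le 2^{-|\mathtt{i}|}d(x,y)$ and $d(S_{\mathtt{i}}(x),S_{\mathtt{i}}(y))\ge 2^{-|\mathtt{i}|}d(x,y)$ separately; the first will hold for all $x,y\in K$ and is routine, while only the second needs $\mathtt{j}$ and Lemma~\ref{lma:j-defn}. Two structural facts will drive everything. First, each $S_{\mathtt{i}}$ is a bijection of $\mathbb H$ built from left translations and the dilation $\delta_{1/2}$, so — by left invariance and homogeneity of $d_b$ — it scales $d_b$ by exactly $2^{-|\mathtt{i}|}$, and $S_{\mathtt{i}}^{-1}$ by $2^{|\mathtt{i}|}$. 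Second, $S_{\mathtt{i}}$ sends shortcuts to shortcuts and raises the level by $|\mathtt{i}|$: if $\big(S_{\mathtt{m}}(\frac12,\frac12,0),S_{\mathtt{m}}(\frac12,\frac12,\frac1{64})\big)\in\S_{|\mathtt{m}|+3}$ then its image under $S_{\mathtt{i}}$ is $\big(S_{\mathtt{i}\mathtt{m}}(\frac12,\frac12,0),S_{\mathtt{i}\mathtt{m}}(\frac12,\frac12,\frac1{64})\big)\in\S_{|\mathtt{i}\mathtt{m}|+3}$, and all shortcut costs are $0$.

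\textbf{Upper bound.} For $\epsilon>0$ I would take an itinerary $\x$ from $x$ to $y$ with $c(\x)\le d(x,y)+\epsilon$ and push it forward to $S_{\mathtt{i}}(\x)$, an itinerary from $S_{\mathtt{i}}(x)$ to $S_{\mathtt{i}}(y)$. On a shortcut step the cost stays $0$ by the second fact; on a non-shortcut step the cost is at most the $d_b$-length, which is multiplied by exactly $2^{-|\mathtt{i}|}$; in both cases the step cost is at most $2^{-|\mathtt{i}|}$ times the original. Summing and letting $\epsilon\to0$ gives $d(S_{\mathtt{i}}(x),S_{\mathtt{i}}(y))\le 2^{-|\mathtt{i}|}d(x,y)$; this half uses nothing about $\mathtt{j}$.

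\textbf{Lower bound.} First note the multi-index $\mathtt{j}$ of Lemma~\ref{lma:j-defn} may be taken with $|\mathtt{j}|\ge M+2$ (where $M$ is the constant in the proof of the lemma immediately preceding Lemma~\ref{lma:j-defn}), since $B_{d_b}(S_{\mathtt{j}}(K),C2^{-|\mathtt{j}|})$ fits inside a fixed interior ball of $K$ for $\mathtt{j}$ of all sufficiently large length. Now fix $x,y\in S_{\mathtt{j}}(K)$ and $\epsilon>0$; then $S_{\mathtt{i}}(x),S_{\mathtt{i}}(y)\in S_{\mathtt{i}}(S_{\mathtt{j}}(K))=S_{\mathtt{i}\mathtt{j}}(K)$. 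Lemma~\ref{lma:j-defn} furnishes ${\bf z}=(z_0,\dots,z_N)\in\I^*$ from $S_{\mathtt{i}}(x)$ to $S_{\mathtt{i}}(y)$ with $c({\bf z})\le(1+\epsilon)d(S_{\mathtt{i}}(x),S_{\mathtt{i}}(y))$ and all $z_l\in S_{\mathtt{i}}(K)$; inspecting its construction (item (6) in the proof of the lemma preceding Lemma~\ref{lma:j-defn}, invoked with index $\mathtt{i}\mathtt{j}$), ${\bf z}$ moreover satisfies $L_{{\bf z}}^{-1}([0,|\mathtt{i}\mathtt{j}|-M])=\emptyset$, so every shortcut in ${\bf z}$ has level $\ge|\mathtt{i}|+|\mathtt{j}|-M+1\ge|\mathtt{i}|+3$. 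I would then pull ${\bf z}$ back, setting $w_l:=S_{\mathtt{i}}^{-1}(z_l)\in K$, so that ${\bf w}=(w_0,\dots,w_N)$ is an itinerary in $K$ from $x$ to $y$ and $d(x,y)\le c({\bf w})$; the goal becomes $c({\bf w})\le 2^{|\mathtt{i}|}c({\bf z})$ step by step. Non-shortcut steps of ${\bf z}$ contribute a $d_b$-length that gets multiplied by $2^{|\mathtt{i}|}$, which is exactly what is allowed. The crux is the shortcut steps: if $\{z_{l-1},z_l\}=\{S_{\mathtt{m}}(\frac12,\frac12,0),S_{\mathtt{m}}(\frac12,\frac12,\frac1{64})\}$ then $|\mathtt{m}|\ge|\mathtt{i}|$ (level $\ge|\mathtt{i}|+3$) and both endpoints lie in $S_{\mathtt{i}}(K)$, and I must show $\mathtt{m}=\mathtt{i}\mathtt{m}'$, so that the step pulls back to $\{S_{\mathtt{m}'}(\frac12,\frac12,0),S_{\mathtt{m}'}(\frac12,\frac12,\frac1{64})\}\in\S_{|\mathtt{m}'|+3}$ with cost $0$. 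For this I would project horizontally by $\pi$, which intertwines the $S_{\mathtt{i}}$ with the standard dyadic subdivision of $[0,1]^2$: the point $\pi(S_{\mathtt{m}}(\frac12,\frac12,\frac1{64}))$ is the \emph{center} of the generation-$|\mathtt{m}|$ dyadic square $Q_{\mathtt{m}}$, hence an interior point of it, while $\pi(S_{\mathtt{i}}(K))$ is the generation-$|\mathtt{i}|$ dyadic square $Q_{\mathtt{i}}$; since an interior point of $Q_{\mathtt{m}}$ lies in the interior of the unique generation-$|\mathtt{i}|$ square containing $Q_{\mathtt{m}}$, and that interior meets no other generation-$|\mathtt{i}|$ square, we get $Q_{\mathtt{m}}\subseteq Q_{\mathtt{i}}$, i.e.\ the first $|\mathtt{i}|$ horizontal digits of $\mathtt{m}$ and $\mathtt{i}$ agree. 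The identical argument in the vertical coordinate — using that the $z$-fibers of $S_{\mathtt{i}}(K)$ are the generation-$|\mathtt{i}|$ $4$-adic subintervals of the unit-length fibers of $K$, and that $\frac1{64}=4^{-3}$ lies strictly inside $[0,1]$ so that $S_{\mathtt{m}}(\frac12,\frac12,\frac1{64})$ sits interior to its generation-$|\mathtt{m}|$ fiber — forces the first $|\mathtt{i}|$ vertical digits to agree as well, so $\mathtt{m}=\mathtt{i}\mathtt{m}'$. Summing, $d(x,y)\le c({\bf w})\le 2^{|\mathtt{i}|}c({\bf z})\le 2^{|\mathtt{i}|}(1+\epsilon)d(S_{\mathtt{i}}(x),S_{\mathtt{i}}(y))$, and $\epsilon\to0$ finishes, giving equality with the upper bound.

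\textbf{Main obstacle.} The only nontrivial point is that last crux: that a shortcut of level $\ge|\mathtt{i}|+3$ with both endpoints in $S_{\mathtt{i}}(K)$ genuinely \emph{lives inside} $S_{\mathtt{i}}(K)$, i.e.\ comes from a multi-index extending $\mathtt{i}$. This rests on the self-similar tiling picture of $K$ (each $S_{\mathtt{i}}(K)$ is a ``dyadic box'' whose interior is disjoint from every other box of the same generation) together with the fact that a shortcut's distinguished endpoint $S_{\mathtt{m}}(\frac12,\frac12,\frac1{64})$ projects to interior points in both the horizontal and the vertical factor. The accompanying subtlety is that one must \emph{exclude} low-level ``intruder'' shortcuts — those of level in $\{|\mathtt{i}|+1,|\mathtt{i}|+2\}$, whose endpoints can land on the boundary of $S_{\mathtt{i}}(K)$ yet fail to pull back to shortcuts — which is precisely why I use the level lower bound built into the construction of ${\bf z}$ and the freedom to take $|\mathtt{j}|$ large.
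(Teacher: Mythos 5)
Your proof follows the same two-inequality skeleton as the paper's own argument: push an itinerary forward under $S_{\mathtt{i}}$ to get $\le$, and pull an itinerary supplied by Lemma~\ref{lma:j-defn} back under $S_{\mathtt{i}}^{-1}$ to get $\ge$. Where you differ is in how much scrutiny you give the pull-back step. The paper simply asserts that applying $S_{\mathtt{i}}^{-1}$ to the itinerary $\x$ in $S_{\mathtt{i}}(K)$ yields an itinerary $\x'$ in $K$ with $c(\x')=2^{|\mathtt{i}|}c(\x)$ and leaves it at that. You correctly flag that the nontrivial content of this equality is that every \emph{shortcut} step of $\x$ must pull back to a shortcut of $K$; a priori a pair in $\S$ with both endpoints in $S_{\mathtt{i}}(K)$ need not arise from a multi-index extending $\mathtt{i}$, and if it did not, $c(\x')$ would \emph{exceed} $2^{|\mathtt{i}|}c(\x)$ and the lower bound would break. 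Your two-part patch --- first noting that $\mathtt{j}$ from Lemma~\ref{lma:j-defn} may be taken with $|\mathtt{j}|\ge M+2$ so that property~(6) of the preceding lemma forces every shortcut in the furnished itinerary to have level at least $|\mathtt{i}|+3$ (equivalently $|\mathtt{m}|\ge|\mathtt{i}|$), and then using the dyadic tiling (interior of the shortcut's square/fiber meets a unique same-generation tile) to conclude $\mathtt{m}$ extends $\mathtt{i}$ --- is sound and is exactly what the paper's one-line assertion implicitly relies on. So this is the same route as the paper, executed with additional care at the one spot where the paper's proof is terse; the geometric detail you add (exclusion of ``intruder'' shortcuts of level between $3$ and $|\mathtt{i}|+2$) is a genuine clarification rather than a different method.
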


 \begin{proof}
   If $\x$ is an itinerary from $x$ to $y$, then $S_{\mathtt{i}}(\x)$ is an itinerary from $S_{\mathtt{i}}(x)$ to $S_{\mathtt{i}}(y)$ with $c(S_{\mathtt{i}}(\x)) = 2^{-|\mathtt{i}|} c(\x)$ by the properties of the shortcuts.  Thus, we get that $d(S_{\mathtt{i}}(x),S_{\mathtt{i}}(y)) \leq 2^{-|\mathtt{i}|} d(x,y)$.

   For any $\epsilon > 0$ and $S_{\mathtt{i}}(x),S_{\mathtt{i}}(y) \in S_{\mathtt{i}}(S_{\mathtt{j}}(K))$, we get from Lemma~\ref{lma:j-defn} that there exists an itinerary $\x = (x_0,\dots,x_N)$ from $S_{\mathtt{i}}(x)$ to $S_{\mathtt{i}}(y)$ such that $c(\x) \leq (1+\epsilon) d(S_{\mathtt{i}}(x),S_{\mathtt{i}}(y))$ and $x_j \in S_{\mathtt{i}}(K)$.  Thus, applying $S_{\mathtt{i}}^{-1}$ to $\x$, we get an itinerary $\x'$ from $x$ to $y$ such that $x_j' \in K$ and $c(\x') = 2^{|\mathtt{i}|} c(\x)$.  Thus,
   \[
     d(x,y) \leq c(\x') = 2^{|\mathtt{i}|} c(\x) \leq 2^{|\mathtt{i}|} (1+\epsilon) d(S_{\mathtt{i}}(x),S_{\mathtt{i}}(y)).
   \]
   Taking $\epsilon \to 0$ then gives the lemma.
 \end{proof}

 We can now prove that $K$ is BPI.
 Let $\mathtt{j}$ be the multi-index from Lemma~\ref{lma:j-defn}, $p_1,p_2 \in K$, and $0 < r_1,r_2 < \diam(K)$.
 Now there exist two multi-indices $\mathtt{i_1},\mathtt{i_2}$ such that $\diam(S_{\mathtt{i}_j}(K)) \ge c r_j$
 and $S_{\mathtt{i}_j}(K) \subset B(p_j,r_j)$ for $j \in \{1,2\}$. Define $A = S_{\mathtt{i_1}}(S_{\mathtt{j}}(K)) \subset B(p_1,r_1)$.
 Then the map $f \colon A \to B(p_2,r_2)$ defined as $f = S_{\mathtt{i}_2}\circ S_{\mathtt{i}_1}^{-1}$ satisfies
 \[
   d(f(p),f(q)) \overset{\eqref{e:j-invariance}}{=} 2^{|\mathtt{i}_1|-|\mathtt{i}_2|}d(p,q), \qquad \forall p,q \in A.
 \]
 Since $\mathcal{H}_d^4(A) \ge c\mathcal{H}_d^4(B(p_1,r_1))$ for some $c$ depending only on $\mathtt{j}$ and $2^{|\mathtt{i}_1|-|\mathtt{i}_2|}$ is comparable to
 $r_2/r_1$, we are done with showing that $(K,d)$ is BPI.

%
%
%

 \subsection{$(K,d)$ does not look down on $(\mathbb H,d_b)$}
 By contradiction, suppose that $(K,d)$ does look down on $(\mathbb H,d_b)$. 
 Then there would exist a closed set $A \subset K$ and a 
 Lipschitz map $f \colon (A,d) \to (\mathbb H,d_b)$ with $\mathcal{H}^4(f(A))>0$. Since $d \le d_b$, also 
 $f \colon (A,d_b) \to (\mathbb H,d_b)$ is $L$-Lipschitz. Then $f$ is Pansu-differentiable almost everywhere in $A$.
 Moreover, the Pansu-differential $Df(x)$ is bijective on a set $A' \subset A$ of positive measure by the area formula:
 \[
  0 < \mathcal{H}^4(f(A)) \overset{\eqref{e:area-formula}}{\le} \int_A J(Df(x))\,d\mathcal{H}^4_{d_b}(x).
 \]
 Since for all $n,m \in \N$ the set
 \[
  B_{n,m} = K \setminus \bigcup_{k = n}^\infty \bigcup_{ \mathtt{i} \in \{1,\dots,16\}^k}S_{\mathtt{i}}(K\cap B_{d_b}(0,\frac1m))
 \]
 has $\mathcal{H}^4_{d_b}$-measure zero as a porous set, the set
 \begin{equation}\label{eq:Adef}
  A'' = A' \setminus \bigcup_{n=1}^\infty \bigcup_{m=1}^\infty B_{n,m}
 \end{equation}
 has positive measure. Let $x \in A''$ be a density point of $A''$.
 Since $x \in A''$, by the definition \eqref{eq:Adef} there exists a sequence $(n_m)_{m=1}^\infty$ of integers with
 $n_m \to \infty$ as $m \to \infty$ and a sequence of multi-indices $\mathtt{i}_m$ with $|\mathtt{i}_m| = n_m$ such that
 $x \in S_{\mathtt{i}_m}(K)$ for all $m \in \N$ and
 \[
  d_b(x,x_m)< \frac1m 2^{-{n_m}} ,
 \]
 for all $m \in \N$, where $x_m = S_{\mathtt{i}_m}(0)$.

 
 Let $K_m = \delta_{2^{n_m}}(x^{-1}A)\cap K$. Then the functions $f_m \colon (K_m,d) \to (\mathbb H,d_b)$ defined as
 \[
   f_m(p) = \delta_{2^{n_m}}(f(x)^{-1}f(x\delta_{2^{-{n_m}}}(p)))
 \]
 satisfy 
 
 \begin{equation}\label{eq:fm_estimate}
  \begin{aligned}
   d_b(f_m(p),f_m(q)) & =  d_b(\delta_{2^{n_m}}(f(x)^{-1}f(x\delta_{2^{-{n_m}}}(p))),\delta_{2^{n_m}}(f(x)^{-1}f(x\delta_{2^{-{n_m}}}(q)))) \\
   & =  2^{n_m} d_b(f(x\delta_{2^{-{n_m}}}(p)),f(x\delta_{2^{-{n_m}}}(q))) \\
   & \le  2^{n_m} L d(x\delta_{2^{-{n_m}}}(p),x\delta_{2^{-{n_m}}}(q))\\
   & \le  L d(S_{\mathtt{i}_m}^{-1}(x\delta_{2^{-{n_m}}}(p)),S_{\mathtt{i}_m}^{-1}(x\delta_{2^{-{n_m}}}(q)))\\
   & =  L d(\delta_{2^{n_m}}(x_m^{-1}x)p,\delta_{2^{n_m}}(x_m^{-1}x)q)\\
   & \le  L \left(d(\delta_{2^{n_m}}(x_m^{-1}x)p,p) + d(p,q) + d(q,\delta_{2^{n_m}}(x_m^{-1}x)q)\right),
  \end{aligned}
 \end{equation}
 where the first inequality follows from the fact that $f$ is $L$-Lipschitz and the second inequality from the fact that
 \[
   S_{\mathtt{i}_n}(\S) \subset \S.
 \]
 Notice that 
 \begin{equation}\label{eq:errorterm}
  d(\delta_{2^{n_m}}(x_m^{-1}x)p,p) \le d_b(\delta_{2^{n_m}}(x_m^{-1}x)p,p) \to 0
 \end{equation}
 as $m \to \infty$ since $d_b(\delta_{2^{n_m}}(x_m^{-1}x),0) \to 0$ as $m \to \infty$.
 The convergence in \eqref{eq:errorterm} holds uniformly for $p \in K$ by the compactness of $K$.
 
 Since $x$ is a density point of $A''$ and hence of $A$, we have that 
 for all $p \in K$ there exists a sequence $(p_m)_{m=1}^\infty$  with $p_m \in K_{n_m}$ and $d_b(p_m,p) \to 0$
 as $m \to \infty$. Along this sequence 
 by the fact that $Df(x)$ is homogeneous we get 
 \begin{eqnarray*}
   d_b(f_m(p_m),Df(x)(p_m)) & = &
   d_b(\delta_{2^{n_m}}(f(x)^{-1}f(x\delta_{2^{-{n_m}}}(p_{n_m}))),\delta_{2^{{n_m}}}(Df(x)(\delta_{2^{-{n_m}}}(p_m))))\\
   &=&2^{n_m} d_b(f(x)^{-1}f(x\delta_{2^{-{n_m}}}(p_m)),Df(x)(\delta_{2^{-{n_m}}}(p_m))) \to 0,
 \end{eqnarray*}
 as $m \to \infty$. 
 Hence also
 \begin{equation}\label{eq:fmdestimate}
  \begin{aligned}
   d_b(f_m(p_m),Df(x)(p)) & \le d_b(f_m(p_m),Df(x)(p_m))
    + d_b(Df(x)(p_m),Df(x)(p)) \to 0,
  \end{aligned}
 \end{equation}
 as $m \to \infty$.

 Combining the estimates \eqref{eq:fmdestimate}, \eqref{eq:fm_estimate} and \eqref{eq:errorterm} with the fact that 
 $d(p_m,p) \leq d_b(p_m,p) \to 0$ we get
 \begin{eqnarray*}
   d_b(Df(x)(p),Df(x)(q)) & \le & d_b(f_m(p_m),Df(x)(p)) + d_b(f_m(q_m),Df(x)(q)) \\
   & & \hspace{5.7cm}+ d_b(f_m(p_m),f_m(q_m))\\
   & \le & d_b(f_m(p_m),Df(x)(p)) + d_b(f_m(q_m),Df(x)(q)) \\
   & & \hspace{0.7cm}+ L\left(d(\delta_{2^{n_m}}(x_m^{-1}x)p_m,p_m) +  d(q_m,\delta_{2^{n_m}}(x_m^{-1}x)q_m)\right)\\
   & &\hspace{3.59cm} + L\left(d(p_m,p) + d(p,q) + d(q,q_m)\right)\\
   & \rightarrow & L d(p,q), \qquad \text{as }m \to \infty.
 \end{eqnarray*}
 Hence $Df(x)$ is also Lipschitz from $(K,d)$ to $(\mathbb H, d_b)$.
 Since $Df(x) \colon (\mathbb H, d_b) \to (\mathbb H, d_b)$ is biLipschitz, also the identity map $\id \colon (K, d)
 \to (\mathbb H, d_b)$ is Lipschitz, but we have shown that this is not the case in Theorem \ref{thm:Heisenbergdistance}.

\section{BiLipschitz equivalent distances on the Heisenberg group}
\label{sec:BiLipschitz}

In the previous sections we constructed and studied distances that were not biLipschitz equivalent on large sets.
In this final section we turn to study distances that are biLipschitz equivalent. 
First we prove Theorem \ref{thm:positiveinvarianttobilip}
showing that adding to Theorem~\ref{thm:Heisenbergdistance} the assumption that
the left-translations are biLipschitz for the distance $d$
forces the distances $d_{cc}$ and $d$ to be biLipschitz equivalent on compact sets.
After this we prove Theorem \ref{thm:fail} giving examples of distances on the Heisenberg group that are
biLipschitz equivalent with $d_{cc}$ having no self-similar tangents.

\subsection{BiLipschitz left-translations: Proof of Theorem~\ref{thm:positiveinvarianttobilip}}



Since $ d_{cc}$ is biLipschitz equivalent to the box distance, up to multiplying $d$ by a constant we assume that 
$d \le d_{b}$.

 Using the Baire Category
Theorem one can show that there exists $L>1$ such that,  if we restrict to a compact set, then the distance $d$ is $L$-biLipschitz invariant, see
 \cite[Lemma 6.7]{LeDonne1}.
 Suppose that the claim of the theorem is not true. 
 Hence, by the left-biLipschitz invariance of the distances,
 for all $N\in \N  $ there exists a point $p \in B_{d_b}(0,\frac1{2N})$
 with $d_b(0,p) > LN d(0,p)$.
 
 
 Write $r_N := d_b(0,p)>0$. We claim that we have that
 \begin{eqnarray}\label{ciccio}
     \bigcup_{n=0}^{N}B_{d_b}(p^n,\frac12 r_N)
  \subset B_d(0,2 r_N)
  . 
 \end{eqnarray}¥
 Indeed,
 if $q\in 
 B_{d_b}(p^n,\frac12 r_N)$, for some $n\leq N$, then 
\begin{eqnarray*}
d(0,q) & \leq &d(0,p) + d(p,p^2)+\ldots + d(p^{n-1} , p^n) + d(p^n , q)\\
&\leq &   LN d(0,p) + d_b(p^n , q)
\\
&\leq & d_b(0,p) + r_N/2  < 2r_N.
\end{eqnarray*}
Moreover, for $i < j<N$, we have that
 \[
   d_b(p^i,p^j) = d_b(0,p^{j-i}) \geq d_b(0,p) = r_N.
 \]

 Let $\{q_i\}_{i \in I_N}$ be a maximal $4r_N$-separated net of points with respect to distance $d$ in $B_{d_b}(0,1)$.
 First,
 by \eqref{ciccio} for all $i \in I_N$ we have that 
$
\{B_{d_b}(q_i p^n,\frac12 r_N)  \}_{n=0}^{N} $
is a disjointed collection of subsets of
$   B_{d_b}(q_i,2r_N)
 $.
 Second, 
 $\{B_d(q_i,2r_N)\}_{i \in I_N}$ is a disjointed collection of subsets of
 $B_{d_b}(0,2)$.
 Hence 
 
%
%
 
 \[
 \#I_N N \mathcal{H}_{d_b}^4(B_{d_b}(0,\frac12 r_N))
  \leq
  \mathcal{H}_{d_b}^4(B_{d_b}(0,2))
  .
 \]
Since $\{B_d(q_i,8r_N)\}_{i \in I_N}$ covers $B_{d_b}(0,1)$, by  definition of Hausdorff measure we deduce
 \begin{eqnarray*}
  \mathcal{H}_d^4(B_{d_b}(0,1)) & \le & \liminf_{N \to \infty} \#I_N (16r_N)^4\\
  &\le& \liminf_{N \to \infty} \frac{\mathcal{H}_{d_b}^4(B_{d_b}(0,2))}{N\mathcal{H}_{d_b}^4(B_{d_b}(0,\frac12 r_N))} (16r_N)^4\\
  & =&  \liminf_{N \to \infty} \frac{64^4}{N} = 0.
 \end{eqnarray*}
 This contradicts the assumption $\H^4_{d}(B_{d_b}(0,1))> 0$.
\qed

\subsection{Distances without self-similar tangents}\label{sec:countermetricdif}

In this final section we prove Theorem \ref{thm:fail}. Namely, we construct
two distances $d_1, d_2$ on $\mathbb H $ that are biLipschitz equivalent to $d_{cc}$ such that
\begin{enumerate}
\item the distance $d_1$ is left-invariant and for all $\lambda_j \to 0$ such that the distances
$$(p,q)\mapsto \dfrac{1}{\lambda_j} d_1(\delta_{\lambda_j} (p),\delta_{\lambda_j} (q)) $$
converge pointwise to some $\rho$, the distance $\rho$ is not self-similar;
\item for all $\lambda_j \to 0$ and $q_j\in \mathbb H$ such that the distances
$$(p,q)\mapsto \dfrac{1}{\lambda_j} d_2(q_j \delta_{\lambda_j} (p), q_j \delta_{\lambda_j} (q)) $$
converge pointwise to some $\rho$, the distance $\rho$ is not self-similar nor left-invariant. 
\end{enumerate}

We will first construct the distance $d_1$ and at the end indicate how the construction can be modified
to obtain the distance $d_2$.


The distance $d_1$ is defined via \eqref{eq:ddef}. The initial distance is $d_{cc}$
and the shortcuts are defined by first taking a sequence of shortcuts from the origin to points in the vertical direction
and then left-translating the shortcuts to start from every point of the space.
Since we want none of the tangents to admit nontrivial dilations, we have to be careful in defining the sequence of shortcuts.
 
 
 Let us define the set of shortcuts from the origin as
 \[
  \S_0 =  \left\{ \left(0,(0,0,4^{-n})
   \right)   \;:\;   n \in a^{-1}(\{1\})  \right\},
 \]
 where $a \colon \N \to \{0,1\}$ is a function determining whether a shortcut is taken on scale $4^{-n}$.
 If we were to take $a(n) = 1$ for all $n$, then the tangents would be self-similar.
 
 The full set of shortcuts is then defined as
 \[
  \S =
  \left\{ (p q_1, p q_2 )\; : \; p \in \mathbb H , \,(q_1,q_2) \vee (q_2, q_1) \in   \S_0
  \right\}
 \]
 and the cost function $c \colon \mathbb H \times \mathbb H \to [0,\infty)$ for $(p,q) \in \S$ as
 \[
  c(p,q) = \frac12 d_{cc}(p,q).
 \]
 The distance $d_1$ is then defined as the $d$ in \eqref{eq:ddef}.
 
 Since $\frac12 d_{cc} \le d_1 \le d_{cc}$, the function $d_1$ is a distance and it is biLipschitz equivalent with $d_{cc}$.
 By the left-invariace of the set of shortcuts $\S$, the distance $d_1$ is also left-invariant.
 
 Since we want to avoid self-similarity, we define the function $a$ so that every word written in the alphabets $\{0,1\}$
 appears consecutively in the sequence $(a(n))_{n\in\N}$ only some limited number of times.
 This is achieved for example by defining
 \[
  a(i) := \begin{cases}
          1, & \text{if there exists }k \text{ odd and }l \in \N\text{ such that }i = (k\ell\prod_{h < \ell}p_h+1)p_\ell\\
          0, & \text{otherwise},
         \end{cases}
 \]
 where $p_\ell$ is the $\ell$:th prime number.
 
 Most of the remainder of the section will be devoted to proving that with this selection of $a$ no blow-up of $d_1$ is self-similar.
 On the level of $a$ the needed property is stated in the next lemma.

 \begin{lemma}\label{lma:norepeat}
  Let $\ell \geq 1$.  There exists some $m \geq 1$ so that for any $i \geq 1$,
  there exists some $j \in \{i,i+1,\dots,i + m\ell\}$ such that $a(j) \neq a(j+m\ell)$.
\end{lemma}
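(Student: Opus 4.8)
The plan is to unpack the definition of $a$ and show that the positions where $a$ takes the value $1$ are so sparse and irregular that no fixed period $m\ell$ can be a genuine period of the sequence near any starting point. Recall that $a(i)=1$ precisely when $i = (k\ell'\prod_{h<\ell'}p_h + 1)p_{\ell'}$ for some odd $k$ and some $\ell' \in \N$, where $p_{\ell'}$ is the $\ell'$-th prime. The key structural observation is that for fixed $\ell'$, the set of $i$ with $a(i)=1$ arising from that particular $\ell'$ is an arithmetic progression with common difference $2\ell'\big(\prod_{h<\ell'}p_h\big)p_{\ell'}$, and crucially each such $i$ is divisible by $p_{\ell'}$ but (one checks) \emph{not} by $p_{\ell'+1}$ or larger primes in a controlled way; so the ``source'' $\ell'$ of a $1$ at position $i$ can be read off from the prime factorization of $i$. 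In particular, for all large enough $\ell'$, the gaps between consecutive $1$'s coming from level $\ell'$ are enormous compared to any fixed number.

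First I would fix $\ell \geq 1$ and argue by contradiction: suppose that for every $m\geq 1$ there is some $i = i(m) \geq 1$ such that $a(j) = a(j+m\ell)$ for all $j \in \{i, i+1, \dots, i+m\ell\}$. Taking $j$ to run over a full block of length $m\ell+1$, this says the sequence $a$ restricted to $[i, i+2m\ell]$ is periodic with period $m\ell$. Next I would choose $m$ cleverly — specifically, pick a prime $p_{\ell'}$ with $\ell'$ large (so that $p_{\ell'} > \ell$ and $p_{\ell'}$ does not divide $m\ell$ for the relevant $m$; this can be arranged because there are infinitely many primes) and exhibit, inside any window of the appropriate length, a position $i_0$ with $a(i_0) = 1$ coming from level $\ell'$, together with the position $i_0 + m\ell$: I claim $a(i_0 + m\ell) = 0$. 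The point is that $p_{\ell'} \mid i_0$ but $p_{\ell'} \nmid m\ell$, so $p_{\ell'} \nmid i_0 + m\ell$; hence $i_0+m\ell$ cannot be of the form $(\cdots)p_{\ell''}$ for $\ell'' \geq \ell'$, and a separate (elementary but slightly fiddly) divisibility/size estimate rules out its being produced by any smaller level $\ell'' < \ell'$, since those sources are too sparse or of the wrong residue. This contradicts periodicity, giving the lemma.

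The main obstacle I anticipate is the bookkeeping in the last step: showing that $i_0 + m\ell$ is genuinely \emph{not} in $a^{-1}(\{1\})$, as opposed to merely not being produced by level $\ell'$. One must control all lower levels $\ell'' < \ell'$ simultaneously. I would handle this by first making the window within which $i$ lives large enough — by choosing $m$ (equivalently $m\ell$) to sit between two consecutive level-$\ell'$ sources — so that any competing level-$\ell''$ representation of $i_0$ or $i_0+m\ell$ would force $\ell''$ itself to be large (comparable to $\ell'$), and then a pigeonhole/counting argument on how many distinct levels can contribute a $1$ inside a block of length $m\ell$ closes the gap. An alternative, possibly cleaner, route: use the fact that the multiset of gaps $\{$consecutive differences of $a^{-1}(\{1\})\}$ contains, for each large $N$, a gap exceeding $N$ followed soon after by many small gaps — an aperiodicity signature that directly contradicts any eventual periodicity on a window longer than $N$; then $m$ is chosen to make $m\ell$ exceed that $N$. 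Either way the heart of the matter is a divisibility-plus-density argument, and I expect the write-up to be a page of careful but unglamorous estimates.
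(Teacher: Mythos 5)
Your approach is genuinely different from the paper's, and it contains a concrete error that I don't think you can easily repair.

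The paper's proof is direct and exploits the structure of $P_\ell$ for the \emph{given} $\ell$, not a large auxiliary $\ell'$. Writing $P_\ell = \{(k\ell\prod_{h<\ell}p_h+1)p_\ell : k\in\N\}$ and noting $m\ell = \ell\prod_{h\le\ell}p_h$, one gets $P_\ell = \{p_\ell + m\ell k : k\in\N\}$, an arithmetic progression with common difference exactly $m\ell$. Disjointness of the $P_{\ell'}$'s (which you correctly identify) then implies that on $P_\ell$, the value $a$ alternates: $a(p_\ell + m\ell k)=1$ iff $k$ is odd. So any window of length $m\ell$ contains a $j\in P_\ell$, and $j+m\ell\in P_\ell$ has the opposite $k$-parity, giving $a(j)\ne a(j+m\ell)$ with no further estimates. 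The choice $m = \prod_{h\le\ell}p_h$, depending only on $\ell$, is the entire point.

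Your proposal instead argues by contradiction using a large prime $p_{\ell'}$. The specific step ``$p_{\ell'}\nmid i_0+m\ell$, hence $i_0+m\ell$ cannot be of the form $(\cdots)p_{\ell''}$ for $\ell''\ge\ell'$'' is false for $\ell''>\ell'$: elements of $P_{\ell''}$ have the form $(k\ell''\prod_{h<\ell''}p_h+1)p_{\ell''}$, and since $p_{\ell'}$ divides the product inside the parentheses, the first factor is $\equiv 1\pmod{p_{\ell'}}$, while $p_{\ell''}$ is coprime to $p_{\ell'}$. So $p_{\ell'}$ does \emph{not} divide elements of $P_{\ell''}$ when $\ell''>\ell'$, and non-divisibility by $p_{\ell'}$ gives you no information about membership in those higher-level sets. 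There is also a structural tension you gloss over: to guarantee that a point of $P_{\ell'}$ lies in every window of length $m\ell$, you would need $m\ell$ to be at least the common difference $2\ell'\prod_{h\le\ell'}p_h$ of $P_{\ell'}$, but that quantity is divisible by $p_{\ell'}$, which conflicts with your requirement $p_{\ell'}\nmid m\ell$. These issues disappear once you abandon the large-$\ell'$ strategy and instead work within $P_\ell$ itself, as the paper does.
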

\begin{proof}
 Let us write
 \[
  P_\ell = \left\{(k\ell \prod_{h < \ell}p_{h}+1)p_\ell \,:\, k \in \N\right\}.
 \]
 We claim that $\{P_\ell\}_{\ell \in \N}$ is a disjointed collection of sets. In order to see this take $0 < \ell < \ell'< \infty$
 and notice that on one hand for every $k \in \N$ we have
 $p_\ell \mid (k\ell \prod_{h < \ell}p_{h}+1)p_\ell$. On the other hand, since $p_\ell \mid \prod_{h < \ell'}p_{h}$, we have 
 $p_\ell \nmid (k\ell'\prod_{h < \ell'}p_{h}+1)p_{\ell'}$ for all $k \in \N$.
 
 Now let $\ell \geq 1$ be given. Define $m = \prod_{h \le \ell}p_{h}$.
 Then $P_\ell = \{p_\ell + m\ell k \,:\, k \in \N\}$.
 Let $i \ge 1$ and select $j \in \{i,i+1,\dots,i + m\ell\}$
 such that $j \equiv p_\ell \pmod{m\ell}$. Then by definition, $j \in P_\ell$. By the fact that the sets $P_{\ell'}$
 are pairwise disjoint we have from the definition of $a$ that
 \[
 a(m\ell k + p_{\ell}) = \begin{cases}  
                     1,& \text{if }k \text{ is odd},\\
                     0,& \text{if }k \text{ is even}.
                    \end{cases}
 \]
 Thus $a(j) \neq a(j + \ell m)$.
\end{proof}
 
 The next lemmas will be used to connect the blown up distances to the distance $d_1$, and in particular to $a$.
 
\begin{lemma} \label{lma:vertical-itinerary}
  Let $\x = (x_0,\dots,x_N)$ be an itinerary such that $x_0 = 0$ and $x_N \in {\rm Z}({\mathbb H})$.  Then there exists another itinerary ${\bf y} = (y_0,\dots,y_M)$ such that $\Ext({\bf y}) = \Ext(\x)$, $y_{i+1}^{-1}y_i \in {\rm Z}({\mathbb H})$ for all $i$, and $c({\bf y}) \leq c(\x)$.
\end{lemma}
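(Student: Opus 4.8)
The plan is to use that every shortcut increment of $d_1$ is a \emph{central} element of $\mathbb H$, hence commutes with everything: so the shortcut increments can be collected out of the itinerary, what is left (the ordered product of the walking increments) is then forced to be central, and at that point the triangle inequality for $d_{cc}$ closes the argument.

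First I would set up notation. Write $m_i := x_{i-1}^{-1}x_i$, so that, since $x_0 = 0$, one has $x_N = m_1 m_2\cdots m_N$, the product taken in order. Split $\{1,\dots,N\}$ into the set $W$ of walking steps (those $i$ with $(x_{i-1},x_i)\notin\S$, whose $c$-cost is $d_{cc}(x_{i-1},x_i) = d_{cc}(0,m_i)$) and the set $S$ of shortcut steps (those with $(x_{i-1},x_i)\in\S$, whose $c$-cost is $\frac12 d_{cc}(x_{i-1},x_i)$). Because $\S$ consists precisely of left-translates of pairs in $\S_0$ and their reverses, for each $i\in S$ the increment is $m_i = (0,0,u_i)\in{\rm Z}({\mathbb H})$ with $|u_i| = 4^{-n}$ for some $n\in a^{-1}(\{1\})$. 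The next step is to collect these central increments: since ${\rm Z}({\mathbb H})$ is the center, each $m_i$ with $i\in S$ commutes past all other factors, so $m_1\cdots m_N = \big(\prod_{\ell\in W}m_\ell\big)\cdot(0,0,\sigma)$, with the $W$-product in the induced order and $\sigma := \sum_{i\in S}u_i$. As $x_N = (0,0,z)$, this forces $\prod_{\ell\in W}m_\ell = (0,0,z-\sigma)$, a central element.

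Then I would write down the vertical itinerary explicitly. Enumerating $S = \{i_1 < \dots < i_k\}$ and setting $v_0 := 0$, $v_j := \sum_{j'\le j}u_{i_{j'}}$ (so $v_k=\sigma$), define
\[
 {\bf y} := \big((0,0,v_0),(0,0,v_1),\dots,(0,0,v_k),(0,0,z)\big).
\]
All vertices are central, so every consecutive difference lies in ${\rm Z}({\mathbb H})$, and $\Ext({\bf y}) = ((0,0,0),(0,0,z)) = (x_0,x_N) = \Ext(\x)$. For $1\le j\le k$, the $j$-th step joins two central points differing by $(0,0,u_{i_j})$ with $|u_{i_j}|=4^{-n}$, $n\in a^{-1}(\{1\})$; such a pair lies in $\S$ (a pair in $\S_0$ may be left-translated by a central element and, if necessary, reversed), so its $c$-cost is $\frac12 d_{cc}(0,(0,0,u_{i_j})) = \frac12 d_{cc}(x_{i_j-1},x_{i_j})$. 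The last step, from $(0,0,\sigma)$ to $(0,0,z)$, has $c$-cost at most $d_{cc}(0,(0,0,z-\sigma)) = d_{cc}\big(0,\prod_{\ell\in W}m_\ell\big)\le\sum_{\ell\in W}d_{cc}(0,m_\ell) = \sum_{\ell\in W}d_{cc}(x_{\ell-1},x_\ell)$, where the inequality is the triangle inequality along the partial products of $(m_\ell)_{\ell\in W}$. Summing the step costs gives
\[
 c({\bf y})\le\sum_{i\in S}\tfrac12 d_{cc}(x_{i-1},x_i) + \sum_{\ell\in W}d_{cc}(x_{\ell-1},x_\ell) = c(\x),
\]
which is exactly what is needed.

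The one point that requires care — the ``hard part'', such as it is — is the non-abelian bookkeeping in the collecting step: one must be sure that factoring the central shortcut increments out of the ordered product $m_1\cdots m_N$ leaves no residual horizontal or symplectic-area contribution. It does not, precisely because ${\rm Z}({\mathbb H})$ is the center and the shortcut increments have vanishing $x,y$-components, so they drop out of the area terms in the group law. This is what makes $\prod_{\ell\in W}m_\ell$ genuinely central and legitimizes the final triangle-inequality estimate; everything else is routine. (One should also note the harmless degenerate cases $S=\emptyset$, $W=\emptyset$, or $z=\sigma$, where some listed step is trivial and contributes zero cost.)
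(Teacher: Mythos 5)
Your proof is correct and uses essentially the same idea as the paper's: exploit the fact that the shortcut increments lie in the center ${\rm Z}(\mathbb H)$, commute them to the front of the ordered product, and absorb the remaining (walking) increments into a single vertical step whose cost is controlled by the triangle inequality for $d_{cc}$. The one cosmetic difference is that the paper collects to the front \emph{all} steps with central increment (the set $A=\{k:d_k\in{\rm Z}(\mathbb H)\}$), whereas you collect only the \emph{shortcut} steps $S\subseteq A$; your variant is if anything slightly tidier, since cost-preservation of the collected steps is then immediate, while the paper has to observe (implicitly) that left-translating a central non-shortcut step yields a step of the same cost.
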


\begin{proof}
  For the itinerary $\x = (x_0,\dots,x_N)$, we define $d_k = x_i^{-1}x_{i-1}$.  Let
  $$A = \{k : d_k \in {\rm Z}({\mathbb H})\}.$$
  We can define a bijection $\sigma : \{1,\dots,N\} \to \{1,\dots,N\}$ that maps $\{1,\dots,|A|\}$ to $A$ and preserves the ordering of $A^c$ (thus, we shift $A$ to the beginning in any order).  We now define the itinerary $(y_0,\dots,y_{|A|+1})$ where $y_0 = 0$, $y_{|A|+1} = x_N$, and $y_i = d_{\sigma(1)} \cdots d_{\sigma(i)}$.  As we only rearranged elements that are in the center, we get that $y_{|A|+1}^{-1}y_{|A|}$ is precisely the product (in order) of all the noncentral elements of $d_k$ and is itself central.
  
  It remains to show that $c({\bf y}) \leq c(\x)$.  We have that
  \begin{equation}
    c(x_{k-1},x_k) = c(y_{\sigma^{-1}(k)-1},y_{\sigma^{-1}(k)}), \qquad \forall k \in A. \label{e:A-c}
  \end{equation}
  As $d_k \notin {\rm Z}({\mathbb H})$ for $k \notin A$, we get that $(x_{k-1},x_k) \notin \S$ for $k \notin A$ and so
  \begin{equation}
    \sum_{k \notin A} c(x_{k-1},x_k) = \sum_{k \notin A} d_{cc}(x_{k-1},x_k) \geq d_{cc}(y_{|A|},y_{|A|+1}) \geq c(y_{|A|},y_{|A|+1}). \label{e:Ac-c}
  \end{equation}
  Thus, by 
  \eqref{e:A-c} and \eqref{e:Ac-c}
  we get that
  \[
    c({\bf y}) = \sum_{k=1}^{|A|+1} c(y_{k-1},y_k) 
    \leq
    \sum_{k=1}^N c(x_{k-1},x_k) = c(\x).
  \]
\end{proof}

\begin{lemma}\label{lma:uniformlowerbounds-1}
 There exists a continuous function $f \colon [1,4] \to [\frac12,1]$ with the properties that
 $f(t)>\frac12$ for all $t \in (1,4)$ and 
 \begin{equation}
  d_1(0,(0,0,t4^{-n})) \ge f(t) d_{cc}(0,(0,0,t4^{-n})) \label{e:f-lower-bound}
 \end{equation}
 for all $n \in \N$ and $t \in (1,4)$.   
\end{lemma}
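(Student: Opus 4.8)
The plan is to pull the computation of $d_1(0,(0,0,t4^{-n}))$ back to a one–dimensional shortcut distance on the center of $\mathbb H$ and then exploit the dilation $\delta_{2^n}$. Since the target $(0,0,t4^{-n})$ is central, Lemma~\ref{lma:vertical-itinerary} lets us restrict to itineraries all of whose points lie in ${\rm Z}(\mathbb H)=\{(0,0,z):z\in\R\}$. Writing $c_0:=d_{cc}(0,(0,0,1))$, the dilation structure gives $d_{cc}(0,(0,0,z))=c_0\sqrt{|z|}$, so under the identification $(0,0,z)\leftrightarrow z$ such itineraries become itineraries on $\R$ in which a non-shortcut step from $z$ to $z'$ costs $c_0\sqrt{|z-z'|}$, while the only available shortcuts are the moves $z\mapsto z\pm 4^{-m}$ with $m\in a^{-1}(1)$, each of cost $\tfrac12 c_0 2^{-m}$; hence $d_1(0,(0,0,t4^{-n}))$ is exactly $c_0$ times the resulting shortcut distance on $\R$ from $0$ to $t4^{-n}$.

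Applying $\delta_{2^n}$ rescales $\R$ by the factor $4^n$: it multiplies every cost by $2^n$ and sends a level-$m$ shortcut to the move $z\mapsto z\pm 4^{-(m-n)}$. Thus $d_1(0,(0,0,t4^{-n}))=c_0\,2^{-n}\,\mathcal C_n(t)$, where $\mathcal C_n(t)$ is the normalized shortcut distance from $0$ to $t$ allowing non-shortcut steps of cost $\sqrt{|\cdot|}$ and shortcuts of levels in $\{m-n:m\in a^{-1}(1)\}\subseteq\Z$. Enlarging the set of allowed levels only decreases the distance, so $\mathcal C_n(t)\ge D(t)$, where $D$ is the translation-invariant distance on $\R$ obtained by allowing non-shortcut steps of cost $\sqrt{|z-z'|}$ together with all moves $z\mapsto z\pm 4^{-\ell}$, $\ell\in\Z$, of cost $\tfrac12 2^{-\ell}$. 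Since $d_{cc}(0,(0,0,t4^{-n}))=c_0\sqrt t\,2^{-n}$, setting $f(t):=D(t)/\sqrt t$ (which does not depend on $n$) gives $d_1(0,(0,0,t4^{-n}))\ge f(t)\,d_{cc}(0,(0,0,t4^{-n}))$ for all $n\in\N$ and $t\in(1,4)$, i.e.\ \eqref{e:f-lower-bound}. For every $t>0$ one has $\tfrac12\sqrt t\le D(t)\le\sqrt t$: a single step gives the upper bound, and for the lower bound one writes the cost of an itinerary with increments $\Delta_i$ and non-shortcut index set $W$ as $\tfrac12\sum_i\sqrt{|\Delta_i|}+\tfrac12\sum_{i\in W}\sqrt{|\Delta_i|}\ge\tfrac12\sqrt{\sum_i|\Delta_i|}\ge\tfrac12\sqrt t$. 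Hence $f([1,4])\subseteq[\tfrac12,1]$; moreover $|D(s)-D(t)|\le D(|s-t|)\le\sqrt{|s-t|}$, so $D$ is $\tfrac12$-Hölder and $f$ is continuous on $[1,4]$. Everything therefore reduces to proving $D(t)>\tfrac12\sqrt t$ for $t\in(1,4)$, and in fact I will produce a gap $D(t)\ge\tfrac12\sqrt t+g(t)$ with $g(t)>0$.

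Fix $t\in(1,4)$ and choose $\eta=\eta(t)>0$ with $\eta^2<t-1$ and $1+\sqrt{t-1-\eta^2}>\sqrt t$; the latter is possible because $\sqrt 1+\sqrt{t-1}>\sqrt{1+(t-1)}=\sqrt t$ for $t>1$. Let $0=z_0,\dots,z_M=t$ be any itinerary, with non-shortcut index set $W$ and shortcut index set $S$; put $P_W:=\sum_{i\in W}\sqrt{|\Delta_i|}$ and $P_S:=\sum_{i\in S}\sqrt{|\Delta_i|}=\sum_{i\in S}2^{-\ell_i}$, so the cost is $P_W+\tfrac12 P_S$. If $P_W\ge\eta$, then $(P_W+P_S)^2\ge P_W^2+P_S^2\ge\sum_i|\Delta_i|\ge t$, so $\mathrm{cost}=\tfrac12(P_W+P_S)+\tfrac12 P_W\ge\tfrac12\sqrt t+\tfrac12\eta$. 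If instead $P_W<\eta$, then $\sum_{i\in W}|\Delta_i|\le P_W^2<\eta^2$, so the shortcuts pointing in the positive direction have total length $\sum_{i\in S^+}4^{-\ell_i}\ge t-\eta^2>1$; in particular $S^+\ne\emptyset$. If $S^+$ is a singleton it has length $>1$, hence level $\le -1$, so $\mathrm{cost}\ge\tfrac12\cdot 2=1>\tfrac12\sqrt t$. If $|S^+|\ge 2$ with two smallest levels $\ell_1\le\ell_2$: for $\ell_1\le -1$ again $\mathrm{cost}\ge 1$; for $\ell_1\ge 1$ every $i\in S^+$ has $2^{-\ell_i}\ge 2\cdot 4^{-\ell_i}$, so $P_S\ge 2\sum_{i\in S^+}4^{-\ell_i}\ge 2(t-\eta^2)>2$ and $\mathrm{cost}>1$; and for $\ell_1=0$ one has $\big(\sum_{i\in S^+,\,i\ne 1}2^{-\ell_i}\big)^2\ge\sum_{i\in S^+,\,i\ne 1}4^{-\ell_i}\ge(t-\eta^2)-1$, whence $P_S\ge 1+\sqrt{t-1-\eta^2}$ and $\mathrm{cost}\ge\tfrac12\big(1+\sqrt{t-1-\eta^2}\big)>\tfrac12\sqrt t$. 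In all cases the cost exceeds $\tfrac12\sqrt t$ by an amount depending only on $t$; taking the infimum over itineraries gives $D(t)>\tfrac12\sqrt t$, hence $f(t)>\tfrac12$, completing the proof. The main obstacle is precisely this last step: the bound $D(t)\ge\tfrac12\sqrt t$ is not strict at the endpoints $t=1,4$ (where a single shortcut is optimal) and near-optimal itineraries genuinely approach equality there, so one must rule out such efficiency uniformly for $t$ strictly between two powers of $4$.
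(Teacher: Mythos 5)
Your argument is correct, but it takes a genuinely different route from the paper. The paper chooses the explicit function $f(t)=\min\bigl(\tfrac{1}{\sqrt t},\,\tfrac12\sqrt{\tfrac{2t}{t+1}}\bigr)$ and splits on the size of the largest vertical increment $\ell_M$: if $\ell_M\ge 4^{-n+1}$ even the discounted cost of that one step is enough; if $\ell_M\in[\tfrac{1+t}{2}4^{-n},4^{-n+1})$ that step cannot be a shortcut (its length is strictly between two dyadic scales) so it is charged in full; and if $\ell_M<\tfrac{1+t}{2}4^{-n}$ then concavity of the square root forces many steps of substantial aggregate cost. You instead pass to a universal translation-invariant one-dimensional shortcut distance $D$ on $\R$ in which \emph{all} dyadic scales $4^{-\ell}$, $\ell\in\Z$, are available at half price, and set $f=D(\cdot)/\sqrt{\cdot}$. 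That normalization buys you two things cheaply: $n$-independence is immediate from the dilation $\delta_{2^n}$, and continuity of $f$ follows from translation invariance plus the crude Hölder estimate $|D(s)-D(t)|\le D(|s-t|)\le\sqrt{|s-t|}$, whereas the paper must pick $f$ continuous by hand. The price you pay is that strict positivity of $f-\tfrac12$ then requires the finer structural analysis (splitting on $P_W$ small or large, and then on the lowest level appearing in $S^+$), which is longer than the paper's three cases. Both arguments ultimately hinge on the same observation that for $t\in(1,4)$ no single shortcut has length exactly $t4^{-n}$, so the itinerary must pay a surcharge somewhere; the paper localizes that surcharge at the single largest step, you localize it at the bulk-carrying positive shortcuts. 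Your version has the mild advantage of exhibiting the lower bound as coming from a fixed auxiliary metric $D$ that makes the scale-invariance and continuity transparent, and it does not need the $\frac{1}{\sqrt t}$ branch of the paper's $f$.
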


\begin{proof}
  We claim that
  \[
    f(t) = \min \left( \frac{1}{\sqrt{t}}
    , \frac{1}{2}\sqrt{ \frac{2t}{t+1}} \right)
  \]
  works.  It is immediate from definition that $f(t) > 1/2$ for $t \in (1,4)$.

  Let $\x = (x_0,x_1,\dots,x_N)$ be an itinerary from $0$ to $(0,0,t 4^{-n})$ where $t \in (1,4)$. 
  By Lemma~\ref{lma:vertical-itinerary}, we may suppose that $x_{i+1}^{-1}x_i \in {\rm Z}({\mathbb H})$. 
  Let $\ell_k$ be the absolute value of the $z$-coordinate of $x_k^{-1}x_{k-1}$.
Let   $\ell_M$ be the maximum of the $\ell_k$'s.
  Then we have that
  \begin{equation}
    \sum \ell_k \geq t 4^{-n}. \label{e:dk-lower}
  \end{equation}
    Suppose first that $\ell_M \geq 4^{-n+1}$, then
  \[
   c(\x) \geq \frac{1}{2} \ell_M^{1/2} \geq 2^{-n} = \frac{1}{\sqrt{t}} \sqrt{t} 2^{-n}\geq f(t) d_{cc}(0,(0,0,t4^{-n})),
  \]
  and we are done.
  Then suppose $\ell_M \in \left[ \frac{1+t}{2} 4^{-n}, 4^{-n+1} \right)$.  Then as $t \in (1,4)$, we get that $(x_{M-1},x_{M}) \notin \S$.  This gives
  \[
    c(\x) \geq \ell_M^{1/2} \geq \sqrt{ \frac{1+t}{2} } 2^{-n} \geq \sqrt{\frac{1 + t}{2t}} \sqrt{t} 2^{-n} \ge \frac{1}{\sqrt{t}}\sqrt{t}2^{-n},
  \]
  and we are done.
  Finally, suppose that  $\ell_M < \frac{1+t}{2} 4^{-n}$.
  By maximality of $\ell_M$ we then have $\ell_k < \frac{1+t}{2} 4^{-n}$ for all $k$.  Thus we have that
  \[
  2 c(\x) \geq  \sum_{k=1}^N \ell_k^{1/2} \geq \sqrt{\frac{2}{t+1}} 2^n \sum_{k=1}^N \ell_k \overset{\eqref{e:dk-lower}}{\geq} \sqrt{ \frac{2t}{t+1}} \sqrt{t}2^{-n},
  \]
  and we are done.
\end{proof}

\begin{lemma} \label{lma:uniformlowerbounds-2}
  For all $n \in a^{-1}(\{0\})$ and $t \in (\frac{1}{2},2)$, we have that
  \begin{equation}
    d_1(0,(0,0,t4^{-n})) \ge \frac{1}{\sqrt{3}} d_{cc}(0,(0,0,t4^{-n})). \label{e:sqrt3-lower}
  \end{equation}
\end{lemma}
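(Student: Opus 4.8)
The plan is to follow the template of the proof of Lemma~\ref{lma:uniformlowerbounds-1}, but to exploit the hypothesis $a(n)=0$ — which says that no shortcut is taken at scale $4^{-n}$ — in order to upgrade the constant from $\tfrac12$ to $\tfrac1{\sqrt3}$. Write $s:=t4^{-n}$, so that the target point is $(0,0,s)$ with $s\in(\tfrac12 4^{-n},2\cdot 4^{-n})$ and, as in the previous proof, $d_{cc}(0,(0,0,s))=\sqrt s$. Fix an itinerary $\x=(x_0,\dots,x_N)$ from $0$ to $(0,0,s)$. By Lemma~\ref{lma:vertical-itinerary} we may assume every step $x_k^{-1}x_{k-1}$ is central; let $\ell_k\ge 0$ be the absolute value of its $z$-coordinate and set $\ell_M:=\max_k\ell_k$. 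As in \eqref{e:dk-lower}, $\sum_k\ell_k\ge s$. Each step $(x_{k-1},x_k)$ is either a walk, costing $\sqrt{\ell_k}$, or a shortcut, costing $\tfrac12\sqrt{\ell_k}$ with $\ell_k=4^{-m}$ for some $m$ with $a(m)=1$; in the latter case $m\ne n$.

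I would split into three cases according to $\ell_M$. If $\ell_M\ge 4^{-n+1}$, then the single step realizing $\ell_M$ costs at least $\tfrac12\sqrt{\ell_M}\ge 2^{-n}$, and since $t<2<3$ this is $\ge\tfrac1{\sqrt3}\sqrt t\,2^{-n}=\tfrac1{\sqrt3}\sqrt s$. If $\tfrac43 s\le \ell_M<4^{-n+1}$, then from $t>\tfrac12$ and $t<2$ one checks $4^{-n-1}<\tfrac43 s$ and $\tfrac43 s<4^{-n+1}$, so $\ell_M$ lies in the interval $(4^{-n-1},4^{-n+1})$, which contains no power $4^{-m}$ other than $4^{-n}$; as $a(n)=0$, the step realizing $\ell_M$ cannot be a shortcut, so $c(\x)\ge\sqrt{\ell_M}\ge\sqrt{\tfrac43 s}=\tfrac2{\sqrt3}\sqrt s\ge\tfrac1{\sqrt3}\sqrt s$.

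The remaining case $\ell_M<\tfrac43 s$ is the heart of the argument, since here many small steps may be shortcuts and one must make sure the factor $\tfrac12$ is not fatal. The key observation is that a shortcut step now has $\ell_k=4^{-m}\le\ell_M<\tfrac43 s<4^{-n+1}$, which forces $m\ge n$, hence $m\ge n+1$ (because $a(n)=0$), hence $\ell_k\le 4^{-n-1}<\tfrac s2$ (using $t>\tfrac12$): shortcut steps are comfortably shorter than $s$. I would then bound the cost of every step from below by $\tfrac1{\sqrt{2s}}\ell_k$: for a walk, $\ell_k\le\ell_M<\tfrac43 s$ gives $\sqrt{\ell_k}=\ell_k/\sqrt{\ell_k}\ge \ell_k/\sqrt{\tfrac43 s}=\tfrac{\sqrt3}{2}\,\ell_k/\sqrt s\ge \ell_k/\sqrt{2s}$; for a shortcut, $\ell_k<\tfrac s2$ gives $\tfrac12\sqrt{\ell_k}\ge \tfrac12\ell_k/\sqrt{\tfrac s2}=\ell_k/\sqrt{2s}$. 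Summing and using $\sum_k\ell_k\ge s$ yields $c(\x)\ge\tfrac1{\sqrt{2s}}\sum_k\ell_k\ge\sqrt{\tfrac s2}=\tfrac1{\sqrt2}\sqrt s\ge\tfrac1{\sqrt3}\sqrt s$. Taking the infimum over all itineraries gives \eqref{e:sqrt3-lower}. The only genuinely delicate point is this last case: one has to notice simultaneously that the ``medium-scale'' steps cannot be shortcuts precisely because $a(n)=0$, and that the truly small (possibly shortcut) steps, although losing a factor $\tfrac12$, are already a factor $\ge\sqrt2$ shorter than $s$, so the two effects cancel with room to spare.
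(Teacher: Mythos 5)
Your proof is correct and takes essentially the same route as the paper's: after reducing to a vertical itinerary via Lemma~\ref{lma:vertical-itinerary}, one lets $\ell_M$ be the largest step height and splits into three regimes (large, medium, small $\ell_M$), using $a(n)=0$ in the medium regime to rule out the single-shortcut competitor. The only difference is cosmetic and lies in the small-$\ell_M$ case: the paper applies the crude per-step bound $c(x_{k-1},x_k)\ge\tfrac12\sqrt{\ell_k}$ uniformly (so $a(n)=0$ plays no role there), while you separate walk steps from shortcut steps and invoke $a(n)=0$ a second time to cap shortcut steps at $4^{-n-1}$, getting a slightly better constant in that regime; both close the argument, and the different thresholds ($\frac{1+4t}{8}4^{-n}$ in the paper versus your $\frac{4t}{3}4^{-n}$) are immaterial.
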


\begin{proof}
  The proof is largely analogous to the proof of Lemma~\ref{lma:uniformlowerbounds-1}.

  Let $(x_0,x_1,\dots,x_N)$ be an itinerary from $0$ to $(0,0,t 4^{-n})$ where $t \in (1/2,2)$ and assume that $a(n) = 0$.  By Lemma~\ref{lma:vertical-itinerary}, we may suppose that $x_{i+1}^{-1}x_i \in {\rm Z}({\mathbb H})$.  
    Let $\ell_k$ and   $\ell_M$ be as in Lemma~\ref{lma:uniformlowerbounds-1}, so that we have \eqref{e:dk-lower}.

  Suppose first that $\ell \geq 4^{-n+1}$, then
  \[
c(\x) \geq \frac{1}{2} \ell^{1/2} \geq 2^{-n} = \frac{1}{\sqrt{2}} \sqrt{t} 2^{-n} 
 \ge \frac{1}{\sqrt{3}} d_{cc}(0,(0,0,t4^{-n})),
  \]
  and we are done.
  Then suppose $\ell \in \left[ \frac{1+4t}{8} 4^{-n}, 4^{-n+1} \right)$.  Then as $t \in (1/2,2)$ and $a(n) = 0$, we get that $(x_{M-1},x_M) \notin \S$.  This gives
  \[
  c(\x) \geq  \ell^{1/2} \geq \sqrt{ \frac{1+4t}{8} } 2^{-n} \geq \frac{1}{\sqrt{2}} \sqrt{t} 2^{-n},
  \]
  and we are done.
  Finally, suppose that  $\ell < \frac{1+4t}{8} 4^{-n}$.
  Thus, we have that $\ell_k < \frac{1+4t}{8} 4^{-n}$ for all $k$.  Therefore
  \begin{equation}
  2c(\x) \geq   \sum_{k=1}^N \ell_k^{1/2} \geq \sqrt{\frac{8}{4t+1}} 2^n \sum_{k=1}^N \ell_k \overset{\eqref{e:dk-lower}}{\geq} \sqrt{ \frac{8t^2}{4t+1}} 2^{-n}
    \ge \frac{2}{\sqrt{3}}\sqrt{t}2^{-n},
  \end{equation}
  and we are done.
\end{proof}

\begin{lemma}\label{lma:uniformupperbound}
  For every $\epsilon > 0$, there exists some $\eta \in (0,1/2)$ such that if $|t| < \eta$ and $a(n) = 1$, then
  \[
    d_1(0,(0,0,(1+t) 4^{-n})) \leq \left( \frac{1}{2} + \epsilon \right) d_{cc}(0,(0,0,(1+t) 4^{-n})).
  \]
\end{lemma}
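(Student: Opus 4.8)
The plan is to bound $d_1$ from above by the cost of a single explicit itinerary: one that first jumps along the level-$n$ shortcut from $0$ to $(0,0,4^{-n})$ (available precisely because $a(n)=1$) and then traverses the small remaining vertical distance using the $d_{cc}$-metric. Everything then reduces to an elementary inequality between square roots near $t=0$.

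First I would record the scaling identity for vertical segments. By homogeneity of $d_{cc}$ under the dilations $\delta_\lambda$ — so that $\delta_{\sqrt s}(0,0,\pm 1)=(0,0,\pm s)$ for $s>0$ — together with the fact that the group automorphism $(x,y,z)\mapsto(x,-y,-z)$ is an isometry of $(\mathbb H,d_{cc})$, one gets
\[
  d_{cc}(0,(0,0,s)) = \nu_0\,|s|^{1/2},\qquad \nu_0:=d_{cc}(0,(0,0,1)),
\]
for every $s\in\R$; in particular upward and downward vertical segments have the same $d_{cc}$-length.

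Next, fix $n$ with $a(n)=1$ and $t$ with $|t|<1/2$. Then $(0,(0,0,4^{-n}))\in\S_0\subseteq\S$ with cost $c(0,(0,0,4^{-n}))=\tfrac12 d_{cc}(0,(0,0,4^{-n}))=\tfrac12\nu_0 2^{-n}$. Since $(0,0,4^{-n})^{-1}(0,0,(1+t)4^{-n})=(0,0,t4^{-n})$, I would apply the definition of $d_1$ to the itinerary $\x=(0,\,(0,0,4^{-n}),\,(0,0,(1+t)4^{-n}))$, using $c\le d_{cc}$ on the second step and left-invariance of $d_{cc}$, to obtain
\[
  d_1(0,(0,0,(1+t)4^{-n})) \le c(\x) \le \tfrac12\nu_0 2^{-n} + d_{cc}(0,(0,0,t4^{-n})) = \nu_0 2^{-n}\Bigl(\tfrac12+|t|^{1/2}\Bigr).
\]
Comparing with $d_{cc}(0,(0,0,(1+t)4^{-n}))=\nu_0 2^{-n}(1+t)^{1/2}$, the assertion follows as soon as $\tfrac12+|t|^{1/2}\le(\tfrac12+\epsilon)(1+t)^{1/2}$. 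As $t\to 0$ the left side tends to $\tfrac12$ and the right side to $\tfrac12+\epsilon>\tfrac12$, so a continuity argument produces $\eta=\eta(\epsilon)\in(0,1/2)$ for which the inequality holds whenever $|t|<\eta$; this is the $\eta$ we want.

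The only point requiring any care is the scaling identity $d_{cc}(0,(0,0,s))=\nu_0|s|^{1/2}$, and in particular the symmetry under $s\mapsto-s$, which I would deduce from the dilations $\delta_\lambda$ and the reflection automorphism above; the rest is just the triangle inequality, $c\le d_{cc}$, and the trivial bound of $d_1$ by the cost of one chosen itinerary. I do not expect a real obstacle here: this is the easy one-sided estimate complementing the lower bounds of Lemmas~\ref{lma:uniformlowerbounds-1} and~\ref{lma:uniformlowerbounds-2}.
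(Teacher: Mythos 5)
Your proposal is correct and follows exactly the same route as the paper's proof: use the two-step itinerary $(0,(0,0,4^{-n}),(0,0,(1+t)4^{-n}))$, charge the first leg at the shortcut cost $\tfrac12 d_{cc}$, bound the second leg by $d_{cc}$, and reduce to the elementary inequality $\tfrac12+\sqrt{|t|}\le(\tfrac12+\epsilon)\sqrt{1+t}$ near $t=0$. The only difference is that you spell out the scaling identity $d_{cc}(0,(0,0,s))=\nu_0|s|^{1/2}$ (including the reflection symmetry in $s$), which the paper uses without comment.
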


\begin{proof}
  One has that
  \[
    d_{cc}(0,(0,0,(1+t) 4^{-n})) = \sqrt{1+t} d_{cc}(0,(0,0,4^{-n})).
  \]
  Consider the itinerary $\x = (0,4^{-n},(1+t)4^{-n})$.  Then
  \[
    c(\x) = \frac{1}{2} d_{cc}(0,(0,0,4^{-n})) + d_{cc}(0,(0,0,t 4^{-n})) = \left( \frac{1}{2} + \sqrt{|t|} \right) d_{cc}(0,(0,0, t4^{-n})).
  \]
  Thus, we need that
  \[
    \frac{1}{2} + \sqrt{|t|} \leq \left( \frac{1}{2} + \epsilon \right) \sqrt{1+t}.
  \]
  One sees easily that by taking $\eta$ small enough, we can satisfy this inequality.
\end{proof}

With the help of the above lemmas we conclude by proving:

\begin{proposition}
 No blow-up of $d_1$ is self-similar.
\end{proposition}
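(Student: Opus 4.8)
The plan is to reduce self-similarity to the restriction of $d_1$ to the center ${\rm Z}({\mathbb H})$ and to extract from a putative self-similar blow-up a periodicity of the sequence $a$ (up to a fixed shift) that is forbidden by Lemma~\ref{lma:norepeat}, together with the elementary fact that runs of zeros in $a$ are bounded.

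Since $d_1$ is left-invariant, every blow-up has the form $\rho(p,q)=\lim_{j}\frac1{\lambda_j}d_1(\delta_{\lambda_j}(p),\delta_{\lambda_j}(q))$ for some $\lambda_j\to0$, and if $\rho$ is self-similar with factor $\mu>1$ then in particular $\rho(0,(0,0,\mu^2 s))=\mu\,\rho(0,(0,0,s))$ for all $s>0$. Put $\psi(w):=d_1(0,(0,0,w))/d_{cc}(0,(0,0,w))\in[\tfrac12,1]$, using $\tfrac12 d_{cc}\le d_1\le d_{cc}$. By homogeneity of $d_{cc}$ one has $\frac1{\lambda_j}d_{cc}(0,(0,0,\lambda_j^2 s))=\sqrt s\,d_{cc}(0,(0,0,1))$, so $\rho(0,(0,0,s))=d_{cc}(0,(0,0,1))\sqrt s\,h(s)$ where $h(s):=\lim_j\psi(\lambda_j^2 s)$, a limit that exists because $\rho$ is a pointwise limit; self-similarity then reads $h(\mu^2 s)=h(s)$ for all $s>0$. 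After passing to a subsequence, write $\lambda_j^2=t_j4^{-n_j}$ with $n_j\to\infty$ and $t_j\to t_*$, where, shifting $n_j$ by a bounded amount if necessary, we may take $t_*\in[1,4)$. For $p\in\Z$ set $s_p^{\ast}:=4^p/t_*$, so $\lambda_j^2 s_p^{\ast}=(t_j/t_*)4^{-(n_j-p)}$ with $t_j/t_*\to1$.

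The next step ``decodes'' $a$ from $h$: by Lemma~\ref{lma:uniformupperbound}, $\psi(\lambda_j^2 s_p^{\ast})\to\tfrac12$ along the $j$ with $a(n_j-p)=1$, while by Lemma~\ref{lma:uniformlowerbounds-2}, $\psi(\lambda_j^2 s_p^{\ast})\ge\tfrac1{\sqrt3}$ for all large $j$ with $a(n_j-p)=0$; since $h(s_p^{\ast})$ exists, $(a(n_j-p))_j$ must be eventually constant, with value $\bar a(p)\in\{0,1\}$, and $h(s_p^{\ast})=\tfrac12$ if and only if $\bar a(p)=1$ (whereas $\bar a(p)=0$ forces $h(s_p^{\ast})\ge\tfrac1{\sqrt3}$). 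Now write $\mu^2=\nu\,4^{q}$ with $\nu\in[1,4)$ and $q\ge0$; then $\lambda_j^2\mu^2 s_p^{\ast}=\nu(t_j/t_*)4^{-(n_j-p-q)}$. If $\nu\in(1,4)$, i.e.\ $\mu^2$ is not a power of $4$, then $\nu(t_j/t_*)\in(1,4)$ for large $j$, so Lemma~\ref{lma:uniformlowerbounds-1} gives $\psi(\lambda_j^2\mu^2 s_p^{\ast})\ge f(\nu(t_j/t_*))$ and hence $h(\mu^2 s_p^{\ast})\ge f(\nu)>\tfrac12$; but $h(\mu^2 s_p^{\ast})=h(s_p^{\ast})$, which forces $\bar a(p)=0$ for every $p\in\Z$. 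Then $a$ vanishes on arbitrarily long blocks of consecutive integers, contradicting $a(4m)=1$ for all $m\ge1$, an immediate consequence of the definition of $a$. If instead $\nu=1$, i.e.\ $\mu^2=4^q$ with $q\ge1$, then $\mu^2 s_p^{\ast}=s_{p+q}^{\ast}$, so $h(s_{p+q}^{\ast})=h(s_p^{\ast})$ and therefore $\bar a(p+q)=\bar a(p)$ for all $p\in\Z$, i.e.\ $\bar a$ is $q$-periodic. Applying Lemma~\ref{lma:norepeat} with $\ell=q$ yields $M$ (a multiple of $q$) such that for every $i$ there is $j_i\in\{i,\dots,i+M\}$ with $a(j_i)\neq a(j_i+M)$; taking $i=n_j-M$ for $j$ so large that $a(n_j-p)=\bar a(p)$ for all $-M\le p\le M$, both $j_i$ and $j_i+M$ lie in $\{n_j-M,\dots,n_j+M\}$, where $a$ agrees with $\bar a$, so $q$-periodicity of $\bar a$ gives $a(j_i)=a(j_i+M)$, a contradiction. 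Since every $\mu>1$ falls into one of these two cases, no blow-up of $d_1$ is self-similar.

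I expect the decoding step to be the main obstacle: the relative-scale windows appearing in Lemmas~\ref{lma:uniformupperbound}, \ref{lma:uniformlowerbounds-2} and \ref{lma:uniformlowerbounds-1} must be reconciled with the a priori uncontrolled fractional part $t_j$ of $\log_4(\lambda_j^{-2})$, which is why one passes to a subsequence with $t_j\to t_*$ and works only at the ``resonant'' scales $s_p^{\ast}=4^p/t_*$ and their $\mu^2$-dilates. Once $h$ is known to encode $\bar a$ in this way, the clash with Lemma~\ref{lma:norepeat} in the resonant case, and with the boundedness of zero-runs of $a$ in the non-resonant case, is routine bookkeeping.
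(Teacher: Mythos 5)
Your proof is correct and rests on the same three auxiliary lemmas (Lemma~\ref{lma:uniformlowerbounds-1}, Lemma~\ref{lma:uniformlowerbounds-2}, Lemma~\ref{lma:uniformupperbound}) and the same combinatorial input (Lemma~\ref{lma:norepeat} and the fact that $a(4m)=1$ for all $m\ge1$) as the paper's argument, so the underlying mathematics is the same. The difference is purely in packaging. The paper picks one ``resonant'' point $s^2$ arising as a limit of shortcut scales, first rules out $\lambda\notin 2^{\Z}$ by a pointwise estimate at that $s$, and then produces a contradiction by comparing $\rho$ at two explicitly chosen scales $4^{i}(1+\eta)^k s^2$ and $4^{i+m\ell}(1+\eta)^k s^2$ where $a$ takes opposite values. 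You instead pass to a subsequence with controlled fractional scale $t_j\to t_*$, introduce the ratio function $h(s)=\lim_j\psi(\lambda_j^2 s)$ and the decoded symbol sequence $\bar a(p)$, and observe that a self-similar blow-up with ratio $\mu^2=\nu4^q$ forces either $\bar a\equiv 0$ (if $\nu\neq1$, contradicting $a(4m)=1$) or $q$-periodicity of $\bar a$ (if $\nu=1$, contradicting Lemma~\ref{lma:norepeat} inside the window where $a$ agrees with $\bar a$). This $\bar a$ abstraction makes the periodicity obstruction transparent and merges the paper's two steps into a single clean dichotomy, but it is not a different route: the decoding step uses exactly the same upper and lower bounds that the paper applies directly. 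One small point worth double-checking when you formalize this: you need $n_j-p-q\ge1$ and $\nu(t_j/t_*)\in(1,4)$ for all indices in play before invoking Lemma~\ref{lma:uniformlowerbounds-1}, which holds for $j$ large since $n_j\to\infty$ and $t_j/t_*\to1$, but the quantifier order (choose $P$, then $j$) should be made explicit in the ``arbitrarily long block of zeros'' contradiction.
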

\begin{proof}
Assume to the contrary that there exists a sequence $(\lambda_j)_{j \in \N}$, with $\lambda_j \to 0$ such that the distances
$$(p,q)\mapsto \dfrac{1}{\lambda_j} d_1(\delta_{\lambda_j} (p),\delta_{\lambda_j} (q)) $$
converge pointwise to some $\rho$, and the distance $\rho$ is self-similar with some constant $\lambda > 1$. 

 Let us now find a contradiction by using the assumed self-similarity.
 For this purpose let us first take a point $(0,0,s^2) \in \mathbb H$ appearing as limit of points to which there is a shortcut from the origin.
 In other words, take
 \begin{equation}\label{14598015}
  s \in [1,2^4] \cap \bigcap_{j=1}^\infty\overline{\bigcup_{i \geq j} \{\lambda_i^{-1}2^{-4(k+1)}\,:\, k \in \N\}}.
 \end{equation}
 Then we indeed have
 \[
  \rho(0,(0,0,s^2)) = \lim_{j \to \infty}\frac{1}{\lambda_j}d_1(0,(0,0,\lambda_j^2s^2)) = \frac12d_{cc}(0,(0,0,s^2)),
 \]
 since $a(4(k+1))= 1$ for all $k \in \N$.

 Let us then use the function $f$ of Lemma~\ref{lma:uniformlowerbounds-1}
 to show that there exists $\ell \in \N$ such that $\lambda  = 2^\ell$.
 Supposing this is not the case, we have $\lambda = t2^\ell$ for some $t \in (1,2)$ and $\ell \in \N$.
 By \eqref{14598015} $s$ is of the form $s=\lim_{m\to \infty} \lambda_{i_m}^{-1} 2^{-4(k_m+1)}$, with $i_m, k_m\to \infty$.
 Then, by the continuity of the function $f$ we have
 \begin{eqnarray*}
  \rho(0,(0,0,\lambda^2s^2)) & = &\lim_{j  \to \infty}\frac{1}{\lambda_j}d_1(0,(0,0,\lambda^2\lambda_j^2s^2))\\
& = &\lim_{m  \to \infty}\frac{1}{\lambda_{i_m}} 
d_1(0,(0,0,  t^2 
\left(  \dfrac{\lambda_{i_m} s}{ 2^{-4(k_m+1)}}\right)^2  
 4^\ell  4^{-4(k_m+1)}\lambda^2\lambda_j^2s^2))
\\
  & \ge& 
  \lim_{m  \to \infty}\frac{1}{\lambda_{i_m}} 
 f\left(t^2   \left(  \dfrac{\lambda_{i_m} s}{ 2^{-4(k_m+1)}}\right)^2\right ) d_{cc}(0,(0,0,\lambda^2\lambda_j^2s^2))\\
&=&  f(t^2)\lim_{j  \to \infty}\frac{1}{\lambda_j}d_{cc}(0,(0,0,\lambda^2\lambda_j^2s^2))\\
  & = &f(t^2)\lambda d_{cc}(0,(0,0,s^2)) > \frac12\lambda d_{cc}(0,(0,0,s^2)) = \lambda \rho(0,(0,0,s^2))
  ,
 \end{eqnarray*}
 contradicting the fact that $\rho$ is self-similar with the dilation $\lambda$.

 Therefore $\lambda = 2^\ell$ for some $\ell \in \N$. Now we employ the properties of the function $a$.
 Let $m \in \N$ be the constant from Lemma~\ref{lma:norepeat}. Since $\rho$ is self-similar with factor $2^\ell$,
 it is self-similar also with factor $2^{\ell m}$.
 By Lemma~\ref{lma:uniformupperbound}, we have that there exists some $\eta$ such that $(1+\eta)^N = 4$ for some $N \in \N$ and if $a(n) = 1$, then
 \begin{equation}
   d_1(0,(0,0,(1+t)4^{-n})) \leq 0.51 d_{cc}(0,(0,0,(1+t)4^{-n})), \qquad \forall t \in (-\eta,\eta). \label{e:upperbound}
 \end{equation}
 
 Take $j_0 \in \N$ such that for all $j \ge j_0$ we have
 \begin{equation}
  \frac{\rho(0,(0,0,4^i(1+\eta)^ks^2))}{\lambda_j^{-1} d_1(0,(0,0,4^i(1+\eta)^k\lambda_j^2s^2))} \in \left(1 - \frac{1}{100}, 1 + \frac{1}{100} \right), \label{e:precision}
 \end{equation}
 for all $(i,k) \in \{0,1,\dots,2 m\ell\} \times \{0,\dots,N-1\}$.  Let $n \in \Z$ be such that $(\lambda_j s)^2 \in \left[ \frac{1}{2} 4^{-n}, 2 \cdot 4^{-n}\right)$.  By Lemma~\ref{lma:norepeat}, we have that there exists some $i \in \{0,\dots,m \ell\}$ such that $a(n+i) \neq a(n+ i + m\ell)$.  We may suppose without loss of generality that $a(n+i) = 1$ so $a(n+i+m\ell) = 0$.

 We have that there exists some $k \in \{0,\dots,N-1\}$ such that $4^i(1+\eta)^k \lambda_j^2 s^2 \in ((1-\eta) 4^{n+i},(1+\eta) 4^{n+i})$.  Thus, because $a(n+i) = 1$, we have that
 \begin{equation}\label{eq:finalcontra}
    \rho(0,(0,0,4^i (1+\eta)^k s^2)) \overset{\eqref{e:upperbound} \wedge \eqref{e:precision}}{\leq} 0.52 d_{cc}(0,(0,0,4^i (1+\eta)^k s^2)).
 \end{equation}
 On the other hand, because $a(n+i+m\ell) = 0$ and $$4^{i+m\ell}(1+\eta)^k \lambda_j^2 s^2 \in \left( \frac{1}{2} 4^{n+i+m\ell}, 2 \cdot 4^{n+i+m\ell} \right),$$ we have that
 \begin{eqnarray*}
   \rho(0,(0,0,4^{i+m\ell} (1+\eta)^k s^2)) &\overset{\eqref{e:sqrt3-lower} \wedge \eqref{e:precision}}{\geq} &\frac{99}{100} \frac{1}{\sqrt{3}} d_{cc}(0,(0,0,4^{i+m\ell} (1+\eta)^k s^2)) \\
   &\geq& 0.55 d_{cc}(0,(0,0,4^{i+m\ell}(1+\eta)^k s^2)), \\
   &=& 0.55 \cdot 2^{m\ell} d_{cc}(0,(0,0,4^i(1+\eta)^k s^2)).
 \end{eqnarray*}
 Then by the self-similarity of $\rho$ with ratio $2^{m\ell}$ we have
  \begin{eqnarray*}
   \rho(0,(0,0,4^i (1+\eta)^k s^2)) & = & 2^{-m\ell}\rho(0,(0,0,4^{i+m\ell} (1+\eta)^k s^2))\\
   & \ge & 0.55 d_{cc}(0,(0,0,4^i(1+\eta)^k s^2)).
  \end{eqnarray*}
  This contradicts \eqref{eq:finalcontra}.
 \end{proof}

 In order to obtain the distance $d_2$ of Theorem \ref{thm:fail}, we use only a subset of shortcuts used in the definition
 of the distance $d_1$.
 Let $D_n$ denote the centers of the dyadic cubes in $\R^2$ of sidelength $2^{-n}$.

Define the level $n$ shortcuts as the symmetrization of
\[
 \tilde\S_n = \{((x,y,z),(x,y,z)q) : (x,y) \in D_n, z \in \R, q = (0,0,\pm 4^{-n})\}.
\]
We then construct the set of shortcuts as
\[
  \tilde\S = \bigcup_{n \in a^{-1}(\{1\})} \tilde\S_n.
\]
As in the construction of $d_1$, the cost function $\tilde c : \mathbb{H} \times \mathbb{H} \to [0,\infty)$
for $(p,q) \in \tilde\S$ is 
\[
  \tilde c(p,q) = \frac{1}{2} d_{cc}(p,q).
\]
The distance $d_2$ is then obtained as the distance $d$ in \eqref{eq:ddef}, but now with using $\tilde c$.
Since $\tilde\S \subset \S$ and thus $\tilde c \ge c$, we have
\[
  \frac{1}{2} d_{cc} \leq d_1 \leq d_2 \leq d_{cc}. \label{e:metric-compare}
\]

We will also need the following lemmas.

\begin{lemma} \label{l:off-grid}
  There exists some absolute $\delta > 0$ so that if for any $n \in \N$, if $(x,y) \in B(D_n + (2^{-n-1},2^{-n-1}),\delta 2^{-n})$ and $t \in (1/2,2)$, then
  \[
    d_2((x,y,z),(x,y,z + t4^{-n})) \geq \frac{1}{\sqrt{3}} d_{cc}(0,(0,0,t4^{-n})), \qquad \forall z \in \R.
  \]
\end{lemma}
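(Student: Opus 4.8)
The plan is to run the argument of Lemma~\ref{lma:uniformlowerbounds-2}; the one new input is geometric. Since $(x,y)$ lies within $\delta 2^{-n}$ of a corner of the level-$n$ dyadic grid (i.e.\ a point of $D_n+(2^{-n-1},2^{-n-1})$), and a corner of that grid is at $\ell_\infty$-distance exactly $2^{-n-1}$ from $D_n$, for $\delta$ small every competitor itinerary is prevented from using a shortcut of level $\le n$: it may only use shortcuts of level $\ge n+1$ together with ordinary walks, and then the constant $\tfrac1{\sqrt3}$ falls out essentially as before. As in the earlier lemmas of this section I compute with the box distance $d_b$, for which $d_b(0,(0,0,s))=\sqrt{|s|}$, $d_b(0,(h,0,0))=|h|$, and $\|\pi(p)-\pi(q)\|_\infty\le d_b(p,q)$.

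Let $\x=(x_0,\dots,x_N)$ be an itinerary with $\Ext(\x)=\bigl((x,y,z),(x,y,z+t4^{-n})\bigr)$, and write $c(\x)=C_W+C_F$, where $C_W$ is the total cost of the steps that are not shortcuts (``walks'') and $C_F$ the total cost of the shortcut steps (``flights''). Assume for contradiction that $c(\x)<\tfrac1{\sqrt3}\sqrt t\,2^{-n}<\sqrt{2/3}\,2^{-n}$. A single flight of level $m\le n-1$ already costs $\tfrac12\cdot2^{-m}\ge 2^{-n}>c(\x)$, so none occurs. If $\x$ contained a flight of level $m=n$, performed at a point $p$ with $\pi(p)\in D_n$, then $\|\pi(p)-(x,y)\|_\infty\ge 2^{-n-1}-\delta 2^{-n}$; since $\pi(x_0)=\pi(x_N)=(x,y)$ and flights fix $\pi$, the walks realize a round trip between $(x,y)$ and $\pi(p)$, whence $C_W\ge 2\bigl(2^{-n-1}-\delta 2^{-n}\bigr)=(1-2\delta)2^{-n}>c(\x)$ once $\delta<\tfrac12\bigl(1-\sqrt{2/3}\bigr)$, a contradiction. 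So every flight of $\x$ has level $\ge n+1$.

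Now balance the $z$-coordinate. For a walk step $p\to q$, the box-distance formula~\eqref{eq:box:dist} gives $\bigl|z_q-z_p+\tfrac12(x_py_q-x_qy_p)\bigr|\le d_b(p,q)^2$, and a flight of level $m$ changes $z$ by $\pm4^{-m}$ and leaves $\pi$ fixed. Summing over all steps and using $\pi(x_0)=\pi(x_N)$,
\[
  t4^{-n}=z_N-z_0=-A+E+F,
\]
where $A=\tfrac12\sum_i(x_{i-1}y_i-x_iy_{i-1})$ is the signed area of the closed horizontal polygon $(\pi(x_0),\dots,\pi(x_N))$, $|E|\le\sum_{\text{walks}}(\text{cost})^2\le C_W^2$, and $F=\sum_{\text{flights}}(\pm4^{-m_j})$. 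The polygon has Euclidean length $\le\sqrt2\sum_{\text{walks}}d_b\le\sqrt2\,C_W$, so the planar isoperimetric inequality gives $|A|\le C_W^2/(2\pi)\le C_W^2$; and since each $m_j\ge n+1$, $4^{-m_j}=2\cdot2^{-m_j}\cdot\tfrac12 2^{-m_j}\le 2^{-n}\cdot(\text{cost of the }j\text{th flight})$, so $|F|\le 2^{-n}C_F$. Hence $t4^{-n}\le 2C_W^2+2^{-n}C_F$, i.e.\ with $C_W=\beta_W 2^{-n}$ and $C_F=\beta_F 2^{-n}$ this reads $t\le 2\beta_W^2+\beta_F$. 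Since $\beta_W\mapsto\beta_W+t-2\beta_W^2$ is concave, its minimum on $[0,\sqrt{t/2}]$ is at an endpoint, so $\beta_W+\beta_F\ge\min\bigl(t,\sqrt{t/2}\bigr)$, which equals $\sqrt{t/2}\ge\sqrt{t/3}$ for $t\ge\tfrac12$. Thus $c(\x)\ge\tfrac1{\sqrt3}\sqrt t\,2^{-n}$, contradicting the choice of $\x$. Taking the infimum over itineraries yields $d_2((x,y,z),(x,y,z+t4^{-n}))\ge\tfrac1{\sqrt3}d_{cc}(0,(0,0,t4^{-n}))$.

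The main obstacle, relative to Lemma~\ref{lma:uniformlowerbounds-2}, is that Lemma~\ref{lma:vertical-itinerary} is unavailable here: moving the flights to the front of an itinerary for $d_2$ would put a flight at a base point outside the required $D_m$. That is why the horizontal motion must be controlled directly, through the symplectic (shoelace) term and the isoperimetric inequality; the only thing to check is that the resulting quadratic coefficient (here $2$) stays below $3$, which it does with room to spare. A secondary technical point is the dichotomy in the first step: level-$m$ shortcuts with $m<n$ are killed by the cost of one flight — this is needed because a corner of the level-$n$ grid may coincide with the center of a coarser dyadic cube, so such a flight could sit right at $(x,y)$ — while level-$n$ shortcuts are killed by the round-trip estimate, which uses that $D_n$ keeps $\ell_\infty$-distance $2^{-n-1}$ from the grid corners.
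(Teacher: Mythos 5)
Your proof is correct, but it takes a genuinely different route from the paper once shortcuts of level $\le n$ have been eliminated. The paper disposes of those cases much as you do (itineraries whose horizontal projection leaves $B\bigl((a,b),2^{-n-1}\bigr)$ pay too much in walking), and then simply declares that the remaining case ``is reduced to the proof of Lemma~\ref{lma:uniformlowerbounds-2}.'' You correctly identify why that reduction is not immediate: Lemma~\ref{lma:vertical-itinerary}, the tool that makes itineraries vertical, does not apply to $d_2$ verbatim, since permuting the flights to the front of an itinerary moves their base points off the grids $D_m$. The paper's reduction can nevertheless be salvaged without your machinery: once the itinerary is known to use only shortcuts of level $\ge n+1$, its $\tilde c$-cost is bounded below by its cost in the left-invariant metric obtained from $d_1$ by deleting the level-$n$ shortcuts; Lemma~\ref{lma:vertical-itinerary} applies to that auxiliary metric, and the case analysis of Lemma~\ref{lma:uniformlowerbounds-2} then goes through with the absence of level-$n$ shortcuts supplied by the construction of the metric rather than by $a(n)=0$. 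Your argument replaces this implicit reduction with a direct control of a non-vertical itinerary's vertical gain via the shoelace identity and the planar isoperimetric inequality, followed by a small convex optimization. That is more self-contained and makes the obstruction to the vertical reduction visible rather than swept under the rug, at the price of a somewhat longer computation. Two cosmetic points, neither affecting validity: the decomposition should read $z_N-z_0 = A + E + F$ rather than $-A+E+F$ (this traces back to the sign convention in \eqref{eq:box:dist}), which is immaterial once absolute values are taken; and, as in Lemmas~\ref{lma:uniformlowerbounds-1}--\ref{lma:uniformlowerbounds-2}, you normalize $d_{cc}(0,(0,0,s))$ as $\sqrt{|s|}$ rather than $\tfrac{2}{\sqrt\pi}\sqrt{|s|}$, consistently with the rest of the section.
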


\begin{proof}
  Let $(a,b) \in D_n$ be so that $(x,y) \in B((a,b),\delta 2^{-n})$ ($\delta$ to be chosen later) and let $\x = (x_0,...,x_N)$ be an itinerary from $(x,y,z)$ to $(x,y,z + t4^{-n})$.  Note that
  \[
    d_{cc}(0,(0,0,t4^{-n})) = \frac{2}{\sqrt{\pi}} \sqrt{t 4^{-n}} \leq \sqrt{\frac{8}{\pi}} 2^{-n}.
  \]

  Suppose first that there is some $\pi(x_j) \notin B((a,b),2^{-n-1})$.  As any non-vertical movement is not a shortcut, we have from the fact that $\pi(x_0) = \pi(x_N) \in B((a,b),\delta 2^{-n})$ that if we choose $\delta$ sufficiently small, then
  \[
    c(\x) \geq (1-2\delta) 2^{-n} \geq \frac{1}{\sqrt{3}} \sqrt{\frac{8}{\pi}} 2^{-n} \geq \frac{1}{\sqrt{3}} d_{cc}(0,(0,0,t4^{-n})).
  \]
  This would prove the statement of the lemma.  Thus, we may suppose that the projection of $\x$ under $\pi$ does not go outside $B((a,b),2^{-n-1})$.

  But now the proof is reduced to that of the proof of Lemma \ref{lma:uniformlowerbounds-2}.  Indeed, by the hypothesis of this subcase, the itinerary $\x$ cannot contain any level $n$ shortcuts. 
\end{proof}

\begin{lemma} \label{l:grid-upper}
  For every $\epsilon > 0$ there exists $\eta \in (0,1/2)$ so that if $|t| < \eta$ and $a(n) = 1$, then for all $(x,y) \in B(D_n,\eta 2^{-n})$ and $z \in \R$ we have
  \[
    d_2((x,y,z),(x,y,z + (1+t)4^{-n})) \leq \left( \frac{1}{2} + \epsilon \right) d_{cc}(0,(0,0,(1+t)4^{-n})).
  \]
\end{lemma}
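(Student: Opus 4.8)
The plan is to route from $p:=(x,y,z)$ to $q:=(x,y,z+(1+t)4^{-n})$ by a small walk onto the grid, a single level-$n$ vertical shortcut, and a small walk back, exactly as in the proof of Lemma~\ref{lma:uniformupperbound}. The only new feature, compared with that lemma, is that $(x,y)$ need not lie on $D_n$, so a horizontal correction of size at most $\eta 2^{-n}$ has to be inserted before and after the shortcut; the key computation is that, thanks to the Heisenberg group law, this horizontal correction cancels in the vertical coordinate, so that the walk costs stay of order $(\eta+\sqrt{|t|})2^{-n}$ while the shortcut contributes exactly $\tfrac12 d_{cc}(0,(0,0,4^{-n}))$.

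First I would pick $(a,b)\in D_n$ with $|x-a|\le\eta 2^{-n}$ and $|y-b|\le\eta 2^{-n}$, set $z_1:=z-\tfrac12(xb-ay)$, and put $p':=(a,b,z_1)$ and $p'':=(a,b,z_1+4^{-n})$. By the explicit formula \eqref{eq:box:dist}, the choice of $z_1$ makes the vertical term of $d_b(p,p')$ vanish, so $d_b(p,p')=\max\{|x-a|,|y-b|\}\le\eta 2^{-n}$. A direct computation with the group multiplication (this is the one place to be careful) shows that the vertical term appearing in $d_b(p'',q)$ via \eqref{eq:box:dist} equals precisely $t\,4^{-n}$, the two half cross terms cancelling against $z_1-z$; hence $d_b(p'',q)=\max\{|x-a|,|y-b|,\sqrt{|t|}\,2^{-n}\}\le(\eta+\sqrt{|t|})\,2^{-n}$. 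Since $(a,b)\in D_n$, $z_1\in\R$, and $a(n)=1$, the pair $(p',p'')$ lies in $\tilde\S_n\subseteq\tilde\S$, and by left-invariance of $d_{cc}$ its cost is $\tilde c(p',p'')=\tfrac12 d_{cc}(p',p'')=\tfrac12 d_{cc}(0,(0,0,4^{-n}))$.

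Next I would combine these with $d_2\le d_{cc}$, the triangle inequality for $d_2$, and the biLipschitz equivalence of $d_{cc}$ and $d_b$ (with some absolute constant $C_0$) to get
\[
  d_2(p,q)\;\le\;d_{cc}(p,p')+\tfrac12 d_{cc}(p',p'')+d_{cc}(p'',q)\;\le\;\tfrac12 d_{cc}(0,(0,0,4^{-n}))+C_0(2\eta+\sqrt{|t|})\,2^{-n}.
\]
Writing $c_0:=d_{cc}(0,(0,0,1))$, so that $d_{cc}(0,(0,0,4^{-n}))=c_0 2^{-n}$ and $d_{cc}(0,(0,0,(1+t)4^{-n}))=c_0\sqrt{1+t}\,2^{-n}$, dividing the last display by $d_{cc}(0,(0,0,(1+t)4^{-n}))$ and using $|t|<\eta$ gives
\[
  \frac{d_2(p,q)}{d_{cc}(0,(0,0,(1+t)4^{-n}))}\;\le\;\frac{\tfrac12+C_0 c_0^{-1}(2\eta+\sqrt{\eta})}{\sqrt{1-\eta}}.
\]
The right-hand side tends to $\tfrac12$ as $\eta\to0$, hence is $<\tfrac12+\epsilon$ as soon as $\eta$ is chosen small enough in terms of $\epsilon$ (and of the absolute constants $C_0,c_0$), which is exactly the claim.

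The main (and essentially only) obstacle is the bookkeeping in the second paragraph: one must verify the cancellation that keeps the vertical coordinate of $(p'')^{-1}q$ equal to $t\,4^{-n}$, and keep the numerical constant in front of the shortcut term exactly equal to $\tfrac12$, so that the final ratio sits strictly below $\tfrac12+\epsilon$. Everything else is routine and mirrors the proof of Lemma~\ref{lma:uniformupperbound}.
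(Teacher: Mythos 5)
Your proposal is correct and is precisely the adaptation that the paper alludes to when it says Lemma~\ref{l:grid-upper} ``follows by essentially the same proof as Lemma~\ref{lma:uniformupperbound}'': route through a nearby grid point, take one level-$n$ shortcut, and walk back. Your choice of $z_1=z-\tfrac12(xb-ay)$ (making the vertical component of $d_b(p,p')$ vanish and that of $d_b(p'',q)$ equal to $|t|4^{-n}$) and the resulting estimate $\frac{d_2(p,q)}{d_{cc}(0,(0,0,(1+t)4^{-n}))}\le \frac{\tfrac12+C_0c_0^{-1}(2\eta+\sqrt{\eta})}{\sqrt{1-\eta}}$ are correct and usefully fill in the arithmetic that the paper omits.
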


This follows by essentially the same proof as Lemma \ref{lma:uniformupperbound}.

Since the shortcuts are horizontally located on the centers of the dyadic cubes, we have from an easy argument using Lemmas \ref{l:off-grid} and \ref{l:grid-upper} that no blow-up of $d_2$ is left-invariant.

In order to see that no blow-up of $d_2$ is self-similar we argue similarly as for the distance $d_1$.
First suppose that a blow-up is self-similar with some constant $\lambda > 1$.
This time, instead of finding a limit point $(0,0,s^2)$ of shortcuts from the origin, we find by compactness a pair of points
$(x,y,z)$, $(x,y,z + s^2)$ in $B_{d_{cc}}(0,2^5)$ with $s \in [1,2^4]$
appearing as the limit of endpoints of a sequence of shortcuts. This is possible after passing to a subsequence
because $a(4(n+1)) = 1$ for all $n$.  Observe that Lemma \ref{lma:uniformlowerbounds-1}
holds also for $d_2$ since $d_2 \ge d_1$. As in the case of $d_1$, it then follows
via Lemma \ref{lma:uniformlowerbounds-1} that $\lambda = 2^\ell$ for some $\ell \in \N$.
A contradiction with self-similarity then follows again by the properties of the function $a$.
This concludes the proof of 
Theorem \ref{thm:fail}.

\bibliography{general_bibliography-TEMP} 

\def\cprime{$'$} \def\cprime{$'$} \def\cprime{$'$} \def\cprime{$'$}
  \def\cprime{$'$}
\providecommand{\bysame}{\leavevmode\hbox to3em{\hrulefill}\thinspace}
\providecommand{\MR}{\relax\ifhmode\unskip\space\fi MR }
\providecommand{\MRhref}[2]{%
  \href{http://www.ams.org/mathscinet-getitem?mr=#1}{#2}
}
\providecommand{\href}[2]{#2}
\begin{thebibliography}{LDR14}

\bibitem[Dav15]{gcdavid}
G.C. David, \emph{Bi-{L}ipschitz pieces between manifolds}, Rev. Mat. Iberoam.
  (2015), To appear.

\bibitem[DS97]{davsem}
Guy David and Stephen Semmes, \emph{Fractured fractals and broken dreams},
  Oxford Lecture Series in Mathematics and its Applications, vol.~7, The
  Clarendon Press Oxford University Press, New York, 1997, Self-similar
  geometry through metric and measure.

\bibitem[HS97]{Heinonen_Semmes}
Juha Heinonen and Stephen Semmes, \emph{Thirty-three yes or no questions about
  mappings, measures, and metrics}, Conform. Geom. Dyn. \textbf{1} (1997),
  1--12 (electronic).

\bibitem[Hut81]{Hutchinson}
John~E. Hutchinson, \emph{Fractals and self-similarity}, Indiana Univ. Math. J.
  \textbf{30} (1981), no.~5, 713--747. \MR{625600 (82h:49026)}

\bibitem[Kir94]{Kirchheim}
Bernd Kirchheim, \emph{Rectifiable metric spaces: local structure and
  regularity of the {H}ausdorff measure}, Proc. Amer. Math. Soc. \textbf{121}
  (1994), no.~1, 113--123.

\bibitem[KM03]{Kirchheim_Magnani}
Bernd Kirchheim and Valentino Magnani, \emph{A counterexample to metric
  differentiability}, Proc. Edinb. Math. Soc. (2) \textbf{46} (2003), no.~1,
  221--227.

\bibitem[Laa02]{laakso-bpi}
T.J. Laakso, \emph{Look-down equivalence without {BPI} equivalence}, 2002,
  Preprint.

\bibitem[LD11]{LeDonne1}
Enrico Le~Donne, \emph{Geodesic manifolds with a transitive subset of smooth
  bi{L}ipschitz maps}, Groups Geom. Dyn. \textbf{5} (2011), no.~3, 567--602.

\bibitem[LD13]{LeDonne7}
\bysame, \emph{Properties of isometrically homogeneous curves}, Int. Math. Res.
  Not. IMRN (2013), no.~12, 2756--2786.

\bibitem[LDR14]{LeDonne_Rigot_Heisenberg_BCP}
Enrico Le~Donne and S{\'e}verine Rigot, \emph{Besicovitch {C}overing {P}roperty
  for homogeneous distances on the {H}eisenberg groups}, Preprint, submitted,
  arXiv:1406.1484 (2014).

\bibitem[Li15]{Li-lip-bilip}
S.~Li, \emph{Bi{L}ipschitz decomposition of {L}ipschitz maps between {C}arnot
  groups}, Anal. Geom. Metr. Spaces (2015).

\bibitem[Mag01]{magnani-area}
V.~Magnani, \emph{Differentiability and area formula on stratified {L}ie
  groups}, Houston J. Math. \textbf{27} (2001), no.~2, 297--323.

\bibitem[Mey13]{Meyerson_Lipschitz_biLipschitz}
William Meyerson, \emph{Lipschitz and bilipschitz maps on {C}arnot groups},
  Pacific J. Math. \textbf{263} (2013), no.~1, 143--170.

\bibitem[Pan89]{Pansu}
Pierre Pansu, \emph{M\'etriques de {C}arnot-{C}arath\'eodory et
  quasiisom\'etries des espaces sym\'etriques de rang un}, Ann. of Math. (2)
  \textbf{129} (1989), no.~1, 1--60.

\bibitem[Pau01]{Pauls01}
Scott~D. Pauls, \emph{The large scale geometry of nilpotent {L}ie groups},
  Comm. Anal. Geom. \textbf{9} (2001), no.~5, 951--982.

\bibitem[Pau04]{pauls}
\bysame, \emph{A notion of rectifiability modeled on {C}arnot groups}, Indiana
  Univ. Math. J. \textbf{53} (2004), no.~1, 49--81.

\bibitem[Sch09]{Schul_BiLipschitz_Decomposition}
Raanan Schul, \emph{Bi-{L}ipschitz decomposition of {L}ipschitz functions into
  a metric space}, Rev. Mat. Iberoam. \textbf{25} (2009), no.~2, 521--531.

\end{thebibliography}
\bibliographystyle{amsalpha}
\end{document}